\theoremstyle{plain}
\newtheorem{Theo}{Theorem}
\newtheorem{Prop}{Proposition} 
\newtheorem{Lemma}{Lemma}
\theoremstyle{definition}
\newtheorem*{merci}{Acknowledgments}
\theoremstyle{remark}
\newtheorem{Case}{Case}
\DeclareMathOperator{\infess}{inf ess}
\DeclareMathOperator{\meas}{meas}
\DeclareMathOperator{\supp}{supp}
\DeclareMathOperator{\dist}{dist}
\DeclareMathOperator{\sign}{sign}
      \def\NN{{\mathbb N}}
   \def\RR{{\mathbb
R}}
 \def\cG{{\mathcal G}} 
 \def\cB{{\mathcal B}} 
  \def\cC{{\mathcal C}}
\def\cI{{\mathcal I}} \def\cO{{\mathcal O}} 
 \def\cJ{{\mathcal J}} 
 \def\cE{{\mathcal E}}
  \def\mfS{{\mathfrak S}}
\begin{document}

\title{Co-rotating vortices with $N$ fold symmetry for the inviscid surface quasi-geostrophic equation}
\author{
\renewcommand{\thefootnote}{\arabic{footnote}}
Ludovic Godard-Cadillac\footnotemark[1], Philippe Gravejat\footnotemark[2]~ and Didier Smets\footnotemark[3]}
\footnotetext[1]{Sorbonne Universit\'e, Laboratoire Jacques-Louis Lions, 4, Place Jussieu, 75005 Paris, France.\\ E-mail: {\tt ludovic.godardcadillac@ljll.math.upmc.fr}}
\footnotetext[2]{CY Cergy Paris Universit\'e, Laboratoire de Math\'ematiques AGM, F-95302 Cergy-Pontoise Cedex, France.\\ E-mail: {\tt philippe.gravejat@cyu.fr}}
\footnotetext[3]{Sorbonne Universit\'e, Laboratoire Jacques-Louis Lions, 4, Place Jussieu, 75005 Paris, France.\\ E-mail: {\tt didier.smets@upmc.fr}}
\date{\today}
\maketitle

\begin{abstract}
We provide a variational construction of special solutions to the generalized surface 
quasi-geostrophic equations. These solutions take the form of $N$ vortex patches with $N$-fold 
symmetry, which are steady in a uniformly rotating frame. Moreover, we investigate their 
asymptotic properties when the size of the corresponding patches vanishes. In this limit, we 
prove these solutions to be a desingularization of $N$ Dirac masses with the same intensity, located 
on the $N$ vertices of a regular polygon rotating at a constant angular 
velocity.
\end{abstract}

\section{Introduction}

In this paper, we consider the generalized surface quasi-geostrophic equation 
\begin{equation}
\label{gSQG}
\tag{gSQG}
\begin{cases} 
\partial_t \vartheta + v \cdot \nabla \vartheta = 0,\\
v = \nabla^\perp \Delta^{-s} \vartheta,
\end{cases}
\end{equation}
for $\vartheta : \RR^2 \times \RR \to \RR$, where $0 < s < 1$ is a fixed parameter. We
use here the notation $\Delta^{-s}$ as a shortcut for $-(-\Delta)^{-s}$. 

The surface quasi-geostrophic equation corresponds to the case $s = \frac{1}{2}$, it is used as a 
model in the context of geophysical fluid mechanics with $\vartheta$ being the potential temperature 
in a rapidly rotating stratified fluid with uniform potential vorticity and subject to Brunt-V\"ais\"al\"a oscillations (see~\cite{HePiGaS1} and the references therein). 

Formally at least, the family of nonlinear transport equations~\eqref{gSQG} interpolates between 
the stationary equation $\partial_t \vartheta = 0$ when $s \to 0$, and the two-dimensional Euler 
equation for incompressible fluids in the limit $s \to 1.$ In that last case, the vector field $v$ is 
the flow velocity and the function $\vartheta$ is the flow vorticity, usually written as $\omega$ in
that context.

From a mathematical point of view, the surface quasi-geostrophic equation has attracted 
interest in particular because, while being two dimensional and therefore simpler to analyze, 
it possesses some scaling analogy with the three-dimensional Euler equation 
(see~\cite{ConMaTa1} and the references therein). 
Local well-posedness of classical solutions was established in~\cite{ConMaTa1},
but solutions with arbitrary Sobolev growth were constructed in~\cite{KiseNaz1} (in a periodic
setting). To our knowledge, establishing global well-posedness of classical solutions, or
alternatively describing their finite time singularities, remains an open problem. Note also that
the global existence of weak solutions in $L^2(\RR^2)$ was shown in~\cite{Resnick0}, but their
non-uniqueness below a certain regularity threshold was highlighted in~\cite{BucShVi1}.

As for the two-dimensional Euler equation, all the radially symmetric 
functions $\vartheta$ are stationary solutions to~\eqref{gSQG}. Exhibiting other global smooth 
solutions seems a challenging issue. A first example was recently provided 
in~\cite{CasCoGS1} by developing a bifurcation argument from a specific
radially symmetric function. The variational construction of an alternative example in the form of a
smooth traveling-wave solution was completed in~\cite{GravejatSmets, GodaCad0}. These latter results
are the starting point of the following analysis.

It is well-known that the two-dimensional Euler equation exhibits point vortex solutions
(see~\cite{MarcPul0} and the references therein), which take the form 
$$\vartheta(x, t) = \sum_{n = 1}^N a_n \delta_{x = x_n(t)}.$$ 
In this formula, the notation $\vartheta$ stands for the vorticity of
the ideal fluid under consideration, and $a_n$ and $x_n(t)$ denote the intensity and the position at
time $t$ of the $n$-th vortex. When the number $N$ of vortices is equal to $2$, two special
configurations are of interest. The first one corresponds to the case $a_1 + a_2 = 0$ for which the
two vortices translate at a constant common velocity. When $a_1 - a_2 = 0$ instead, the two vortices 
rotate around each other at a constant angular velocity, while their midpoint remains fixed. This 
latter configuration generalizes to an arbitrary choice of the number $N \geq 3$, for which $N$ 
vortices, with the same intensity and located on the $N$ vertices of a regular polygon, co-rotate 
at a constant angular velocity.

Point vortices are singular weak solutions to the Euler equation. On finite time intervals, they can 
be approximated through regularization by classical solutions (see e.g.~\cite{MarcPul0}). An
interesting issue is to ask for the existence of regularized solutions which approximate the point
vortices for all time. This desingularization issue was answered positively for numerous vortex
configurations of the Euler equation (see e.g.~\cite{SmetVSc1} and the references therein). In
particular, the construction of smooth traveling-wave solutions corresponding to a vortex pair in
translation was achieved in~\cite{Norbury} (see also~\cite{BergerFraenkel, Burton1}). In parallel,
less regular approximations called vortex patches were constructed both for the vortex pair in
translation~\cite{Turking1, Turking2} and for the ones in rotation~\cite{Turking3}.

The notion of point vortex solutions easily extends to the context of the generalized surface
quasi-geostrophic equations. This was the starting point of the constructions
in~\cite{GravejatSmets, GodaCad0}, which exhibited a smooth traveling-wave solution to~\eqref{gSQG}
corresponding to a vortex pair in translation. In this direction, a natural question is to ask for a
similar construction in the case of a vortex pair in rotation. For $1/2 < s < 1$, this question was
answered positively in~\cite{HmidMat1}, where a pair of vortex patches was shown to exist by a
perturbative argument. In~\cite{HassHmid}, solutions with $N$-fold symmetry 
were constructed for $1/2 < s < 1$ (and also for the Euler equation). They are perturbations of one 
radially symmetric vortex patch and therefore are of different nature 
than desingularized point vortex solutions. While completing this work, smooth desingularized 
point vortex solutions of the form of translating vortex pairs or co-rotating vortices were also 
constructed using a perturbative method in \cite{AoDavDPMW}. 

Our main goal in this paper, in the spirit of~\cite{Turking3} for the Euler equation, 
is to provide a variational construction of families, for arbitrary $0 < s < 1$, of $N$ 
co-rotating vortex patches with $N$-fold symmetry. Additionally, we prove 
these patches to be a desingularization of $N$ point vortices in rotation.

\bigskip

Before stating our main result, we first explicit the equation satisfied by a solution
to~\eqref{gSQG}, which remains steady in a uniformly rotating frame. We introduce an angular
velocity $\alpha$ and we look for solutions under the form
\begin{equation}
\label{def:omega}
\vartheta(x, t) = R_{\alpha t}\omega(x) := \omega(R_{-\alpha t} x),
\end{equation}
where $R_\phi$ stands for the counterclockwise rotation of angle $\phi$. Here, we make the choice to use the
notation $\omega$ for the profile of the solution in the rotating frame in reference to the
vorticity formulation of the two-dimensional Euler equation. In view of the
equation for the velocity $v$ in~\eqref{gSQG}, we next set
\begin{equation}
\label{def:u}
u = - \nabla^\perp \big( k_s \star \omega \big).
\end{equation}
In this identity, the function $k_s$ is the fundamental solution of the operator $(- \Delta)^s$ on
$\RR^2$. This solution is explicitly given by the formula
\begin{equation}
\label{def:k_s c_s}
k_s(x) = \frac{c_s}{|x|^{2(1 - s)}},\quad \text{ with } c_s := \frac{\Gamma(1 - s)}{2^{2s} \pi
\Gamma(s)},
\end{equation}
it is often called the Riesz potential of order $2s$, and denoted also as $I_{2s}.$\\ 
Combining~\eqref{def:omega} and~\eqref{def:u} yields
$$v(x, t) = R_{\alpha t} u(R_{-\alpha t} x),$$
so that the first equation in~\eqref{gSQG} reduces to
$$\alpha x^\perp \cdot \nabla \omega(x) + \nabla^\perp \big( k_s \star \omega \big)(x) \cdot \nabla \omega(x) = 0.$$
Hence, a solution to~\eqref{gSQG}, which is steady in a uniformly rotating frame, satisfies the stationary equation
\begin{equation}
\label{eq:rotating frame dynamics}
\nabla \omega(x) \cdot \nabla^\perp \Big( k_s \star \omega(x) +\frac{\alpha}{2} |x|^2\Big) = 0.
\end{equation}
A possible weak formulation for~\eqref{eq:rotating frame dynamics} is that $\omega$ satisfies 
\begin{equation}
\label{eq:weak formulation}
\int_{\RR^2} \omega(x) \nabla \varphi(x) \cdot \nabla^\perp \Big( k_s \star \omega(x) + 
\frac{\alpha}{2}|x|^2\Big) \, dx = 0,
\end{equation}
for any function $\varphi \in \cC_c^\infty(\RR^2)$. The vortex patch solutions which we will
construct own a priori no more regularity than being in the Lebesgue spaces $L^p(\RR^2)$ for any $1 \leq p
\leq \infty$. Since they are also of compact support, due to standard elliptic theory for 
Riesz potentials the corresponding functions 
$k_s \star \omega$ belong to the Sobolev spaces $\dot W^{2 s, p}(\RR^2)$ for any $1 < p < \infty$. 
For $s < 1/2$, this regularity is not yet sufficient to provide a rigorous meaning to the weak 
formulation in~\eqref{eq:weak formulation}.
For that reason, we will rely on a different and restricted notion of weak solutions. More precisely,
in view of~\eqref{eq:rotating frame dynamics} we shall say that if $\omega \in L^1(\RR^2)$ has
compact support and satisfies
$$
\omega = f(k_s \star \omega + \frac{\alpha}{2}|x|^2),
$$
for some Borel measurable function $f:\,\RR\to\RR$, then it is a weak solution of 
\eqref{eq:rotating frame dynamics}. We will be particularly interested in the case where $f$ is
a step function, and in such cases we will show that these weak solutions are also weak solutions
in the sense of~\eqref{eq:weak formulation} when $s \geq \frac{1}{2}.$

\medskip

A vortex patch $\omega$ writes as
$$\omega = \lambda \mathbbm{1}_\Omega.$$
In that identity, the notation $\mathbbm{1}_\Omega$ refers to the characteristic function of the support
$\Omega$ of the patch. The positive number $\lambda$ is its intensity. Since we aim at desingularizing
$N$ point vortices with equal intensity, it is natural to impose that the $N$ patches own the same
intensity $\lambda$. Since the point vortices are located on the $N$ vertices of a regular polygon,
we also impose the symmetry condition
\begin{equation}
\label{eq:symmetry property}
\omega = R_{\frac{2\pi}{N}}\omega.
\end{equation}
In the sequel, this symmetry property is used in order to restrict the
construction to only one vortex patch inside the angular sector
\begin{equation}
\label{def:mfS_N}
\mfS_N := \Big\{ \big( r \cos(\theta), r \sin(\theta) \big) \in \RR^2 : -
\frac{\pi}{N} < \theta < \frac{\pi}{N} \Big\}.
\end{equation}
Moreover, we also impose the vortex patch to be angular Steiner symmetric with
respect to the angle
$\theta = 0$. Recall that this condition is defined as
\begin{equation}
\label{def:angular-Steiner}
\omega(r, \theta) = \omega^\sharp(r, \theta),
\end{equation}
where the notation $(r, \theta)$, with $r > 0$ and $- \pi/N < \theta < \pi/N$, stands for the usual
polar coordinates. In this definition, the angular Steiner symmetrization $\omega^\sharp$ of the
function $\omega$ for the variable $\theta$ is the unique even function such that
$$
\omega^\sharp(r, \theta) > \nu \quad \text{if and only if} 
\quad |\theta| < \frac{1}{2} \meas 
\Big\{ \theta' \in \Big( - \frac{\pi}{2}, \frac{\pi}{2} \Big) : \omega(r, \theta') > \nu \Big\},
$$
for any positive numbers $r$ and $\nu$, and any $- \pi/N < \theta < \pi/N$. It
turns out that the symmetry condition~\eqref{def:angular-Steiner} provides a
number of simplifications in our variational construction of $N$ vortex patches
(see Appendix~\ref{sec:angular-Steiner} for some useful properties of the
angular Steiner symmetrization). In the sequel, we denote by
$L_{\text{sym}}^p(\RR^2)$ the subset of functions in $L^p(\RR^2)$, which
satisfy both~\eqref{eq:symmetry property} and~\eqref{def:angular-Steiner}. 

We are now in position to state our main result.

\begin{Theo}
\label{thm:patch-exist}
Let $0<s<1$ be given, and let $\lambda>0$ be sufficiently large, depending only on $s.$ 
There exist an angular velocity $\alpha_\lambda$ and a function 
$\omega_\lambda \in L_{\text{sym}}^\infty(\RR^2)$, which satisfy the following properties.

\noindent $(i)$ The vorticity $\omega_\lambda$ is a co-rotating vortex patch solution to
\eqref{gSQG} with intensity $\lambda$, in the sense that there exists 
$\mu_\lambda \in \RR$ such that
\begin{equation}
\label{eq:patch}
\omega_\lambda = \lambda \mathbbm{1}_{\{ \psi_\lambda > 0 \}},
\end{equation}
with
\begin{equation}
\label{def:psi-lambda}
\psi_\lambda(x) := \frac{\Gamma(1 - s)}{2^{2s} \pi \Gamma(s)} \int_{\RR^2} 
\frac{\omega_\lambda(x')}{|x - x'|^{2(1 - s)}} \, dx' 
+ \frac{\alpha_\lambda}{2} |x|^2 - \mu_\lambda.
\end{equation}

\noindent $(ii)$ If $1/2 \leq s < 1$, then $\omega_\lambda$ is also a weak solution
to~\eqref{eq:rotating frame dynamics} in the sense of~\eqref{eq:weak
formulation}, where $\alpha \equiv \alpha_\lambda.$

\noindent $(iii)$ Let $\Omega_\lambda := \{ \psi_\lambda > 0 \}$ be the support of $\omega_\lambda$.
There exists a positive number $C$, independent of $\lambda$, such that
$$
	\Omega_\lambda \cap \mfS_N \subseteq B \Big( (1,0), \frac{C}{\sqrt{\lambda}} \Big).
$$
In particular, the support $\Omega_\lambda$ has at least $N$ connected
components when $\lambda$ is sufficiently large.

\noindent $(iv)$ In the limit $\lambda \to \infty$,
\begin{equation}
\label{eq:lim_omega}
\omega_\lambda \to \sum_{n = 0}^{N - 1}\delta_{R_\frac{2 n \pi}{N}(1, 0)},
\end{equation}
weakly in the sense of measures, and if $\frac{1}{2} \leq s < 1$ then the angular speeds satisfy
\begin{equation}
\label{eq:lim_alpha}
\alpha_\lambda \to 
\sum_{n = 1}^{N - 1} \frac{c_s (1 - s)}{\big| (1, 0) - R_\frac{2 n \pi}{N}(1, 0) \big|^{2 (1 - s)}}.
\end{equation}

\noindent $(v)$ Concerning $\mu_\lambda$, we have the asymptotic bounds 
\begin{equation}
\label{eq:mu behavior}
0 < \liminf_{\lambda \to +\infty} \frac{\mu_\lambda}{\lambda^{1 - s}} \leq 
\limsup_{\lambda \to +\infty} \frac{\mu_\lambda}{\lambda^{1 - s}} <
+\infty.
\end{equation}
\end{Theo}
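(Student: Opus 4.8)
The plan is to realize $\omega_\lambda$ as a maximizer of the interaction energy under symmetry and impulse constraints, following the variational strategy of Turkington for the Euler equation. I would fix the intensity $\lambda$ and maximize
$$E(\omega) := \frac{c_s}{2} \int_{\RR^2} \int_{\RR^2} \frac{\omega(x)\, \omega(y)}{|x - y|^{2(1 - s)}} \, dy \, dx$$
over the set of $\omega \in L_{\text{sym}}^\infty(\RR^2)$ with $0 \leq \omega \leq \lambda$, with prescribed mass $\int_{\mfS_N} \omega \, dx = 1$ and prescribed moment of inertia $\int_{\mfS_N} |x|^2 \omega \, dx = 1$, the symmetry being built into $L_{\text{sym}}^\infty$. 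Because the mutual interaction of two adjacent copies increases as they approach the common vertex of the $N$ sectors, I would a priori confine the supports to a fixed annulus $\{\rho_- \leq |x| \leq \rho_+\}$ with $\rho_- < 1 < \rho_+$ independent of $\lambda$, both to guarantee a finite supremum and to prevent the maximizer from concentrating at the origin. The two integral constraints are the source of the Lagrange multipliers $\mu_\lambda$ and $\alpha_\lambda$; crucially, it is the moment of inertia constraint that produces the term $\tfrac{\alpha_\lambda}{2}|x|^2$ in~\eqref{def:psi-lambda} and hence renders $\omega_\lambda$ a genuinely rotating, rather than merely stationary, solution of~\eqref{eq:rotating frame dynamics}.

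Existence of a maximizer then follows from the direct method. The energy is bounded above on the admissible class and is weak-$*$ sequentially continuous, because the Riesz potential sends a bounded, uniformly compactly supported family into a relatively compact family in $L^2_{\mathrm{loc}}$; the pointwise bounds and the two linear constraints pass to the weak-$*$ limit, the annular confinement securing tightness. I would then derive the Euler-Lagrange equation by the bathtub principle: perturbing the maximizer by admissible rearrangements shows that $\omega_\lambda = \lambda$ on $\{\psi_\lambda > 0\}$ and $\omega_\lambda = 0$ on $\{\psi_\lambda < 0\}$, with $\psi_\lambda$ exactly as in~\eqref{def:psi-lambda}. Promoting this to the clean identity~\eqref{eq:patch} requires showing $\meas\{\psi_\lambda = 0\} = 0$, which I would obtain from the real-analyticity of $k_s \star \omega_\lambda$ away from the support together with the non-degeneracy of $\nabla \psi_\lambda$ on its zero set; this establishes part~$(i)$.

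The asymptotic statements $(iii)$--$(v)$, where I expect the genuine difficulty to lie, rest on the superadditivity of the self-interaction. A disk of intensity $\lambda$ and mass $m$ has self-energy of order $m^{1 + s} \lambda^{1 - s}$, so the leading-order energy is a pure self-interaction term of size $\lambda^{1 - s}$, maximized---since $1 + s > 1$---by a single connected blob per sector rather than several; within the fixed annulus all mutual distances are $O(1)$ and contribute only lower-order terms. The mass and moment of inertia constraints then pin the blob's center at radius $1$, the angular Steiner symmetry pins its angle at $\theta = 0$, and an energy comparison with explicit competing disks of radius $C \lambda^{-1/2}$ yields the localization estimate $(iii)$ and the weak convergence to Dirac masses~\eqref{eq:lim_omega} of part~$(iv)$; in particular the support then lies strictly inside the annulus for $\lambda$ large, so the confinement does not enter the Euler-Lagrange equation. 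The bound~\eqref{eq:mu behavior} follows because $\mu_\lambda$ equals $k_s \star \omega_\lambda + \tfrac{\alpha_\lambda}{2}|x|^2$ on the free boundary, whose dominant contribution is the self-potential of order $\lambda^{1 - s}$. Finally,~\eqref{eq:lim_alpha} is read off from the Euler-Lagrange relation interpreted as a force balance at the vortex center: the self-induced velocity vanishes by symmetry, while the velocities induced by the $N - 1$ rotated copies sum, in the limit $\lambda \to \infty$, to the rigid rotation velocity at $(1, 0)$, which is precisely the co-rotating point-vortex identity.

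It remains to upgrade the solution to the weak formulation~\eqref{eq:weak formulation} when $1/2 \leq s < 1$, which is part~$(ii)$. In this range $2 s \geq 1$, so by the elliptic regularity recalled above $k_s \star \omega_\lambda$ admits a weak gradient---precisely the ingredient absent for $s < 1/2$. Using that $\omega_\lambda$ is a function of $\psi_\lambda$, so that $\nabla^\perp \psi_\lambda \cdot \nabla \omega_\lambda = 0$ in the distributional sense, together with the rectifiability of $\{\psi_\lambda = 0\}$ furnished by the non-degeneracy of $\nabla \psi_\lambda$, I would justify the integration by parts that transfers the derivative onto the test function $\varphi$, thereby confirming that $\omega_\lambda$ solves~\eqref{eq:rotating frame dynamics} in the sense of~\eqref{eq:weak formulation}.
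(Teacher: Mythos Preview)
Your overall variational strategy coincides with the paper's: maximize the Riesz interaction energy over $N$-fold and Steiner symmetric densities with $0\le\omega\le\lambda$, subject to the mass and moment-of-inertia constraints, confined to a fixed annular region; the multipliers for the two constraints are $\mu_\lambda$ and $\alpha_\lambda$. The paper does not, however, derive the Euler--Lagrange relation directly via the bathtub principle. It introduces $L^p$-penalized problems, obtains smooth maximizers satisfying $\omega_{\lambda,p}=\lambda(\psi_{\lambda,p})_+^{p'-1}$ with uniform bounds on the multipliers, and passes to the limit $p\to\infty$; the patch structure is then secured by a separate bathtub-for-Riesz-integrals argument applied in the angular variable. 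Your direct route is plausible, but two of your key steps contain genuine gaps.

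First, your argument that $\meas\{\psi_\lambda=0\}=0$ does not go through. The convolution $k_s\star\omega_\lambda$ is real-analytic only on the complement of $\supp(\omega_\lambda)$, whereas the zero level set of $\psi_\lambda$ is the boundary of that support, so analyticity is unavailable exactly where you need it. Moreover, for $s<1/2$ the function $\psi_\lambda$ is not known to lie in any $W^{1,p}$, so $\nabla\psi_\lambda$ has no a~priori meaning and its ``non-degeneracy on the zero set'' cannot be invoked (nor is it clear how you would prove non-degeneracy even when the gradient exists). The paper bypasses all of this by exploiting the angular structure: it shows that the symmetrized kernel $\kappa_s(r,r',\cdot)$ is strictly decreasing on $(0,\pi/N)$, hence $\theta\mapsto\psi_\lambda(r,\theta)$ is strictly decreasing on $(0,\pi/2N)$ for every fixed $r$, and Fubini immediately gives $\meas\{\psi_\lambda=0\}=0$. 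The same unproven non-degeneracy is the weak point of your argument for part~$(ii)$; the paper instead uses only the Sobolev chain rule $\nabla(\psi_\lambda)_+=\mathbbm{1}_{\{\psi_\lambda>0\}}\nabla\psi_\lambda$ (valid since $\psi_\lambda\in W^{1,q}$ when $s\ge 1/2$) and integrates by parts, with no regularity of the free boundary required.

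Second, and more substantially, pure energy comparison cannot deliver the sharp localization~$(iii)$. The paper is explicit on this point: tight two-sided energy bounds yield only a polynomial decay estimate $M\bigl(\omega_\lambda,\,S\setminus\cB((1,0),\Lambda\varepsilon)\bigr)\le C\Lambda^{-\gamma_s}$, and the authors remark that ``it is hopeless to expect that complete concentration of vorticity \ldots\ could hold under such assumptions only (it is indeed not complicated to build counter-examples).'' Upgrading decay to actual support inclusion requires the Euler--Lagrange relation itself. Since $\psi_\lambda$ may fail to be differentiable, the paper works with finite differences $\psi_\lambda(y)-\psi_\lambda(x)$ at carefully chosen pairs of points inside and outside $\Omega_\lambda$, combines the resulting sign information with the decay estimate to confine the support to $\cB((1,0),\Lambda\varepsilon)$ union a residual boundary layer of the annulus, and then eliminates that residual by a competitor argument that moves the stray mass back near $(1,0)$. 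Your sketch omits this entire mechanism; without it the confinement to the annulus cannot be shown to be inactive, and~\eqref{eq:patch} would not follow as stated.
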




In common terms, $\omega_\lambda$ is therefore a desingularized solution 
of the~\eqref{gSQG} equation with $N$ patches rotating at a constant 
angular velocity $\alpha_\lambda.$ 

Note that taking the limit $s \to 1^-$ of the expression on the right-hand-side 
of~\eqref{eq:lim_alpha} in the particular case $N=2$, yields the constant $\frac{1}{4\pi}.$ That 
does correspond exactly to the speed of rotation of a pair of
point vortices on the unit circle evolving according to the Euler 
equations.

We mention that in the case of the Euler equations the
divergence speed of the parameter $\mu_\lambda$ with respect 
to $\lambda$ is logarithmic (see~\cite{Turking3}). In that case
the full limit is known to exist for the quotient in~\eqref{eq:mu behavior}.

We also do not know whether the support of each patch of $\omega_\lambda$
is convex or connected, even for $\lambda$ large. For values of 
$s$ smaller than one half this seems particularly challenging, since the
weak formulation~\eqref{eq:weak formulation} a priori makes no sense.

\bigskip

In the next section we present the outline of the proof of Theorem
\ref{thm:patch-exist}. More precisely, we will describe the variational
strategy which is followed, involving penalized regularized problems, and
describe the various uniform and localization estimates which eventually allow
us to pass to the limit and complete the proof of Theorem
\ref{thm:patch-exist}. The intermediate steps are all stated within that
section, but the proofs are postponed to Section 3. Finally,
Appendix A contains a number of properties relating the Steiner symmetrization
and functionals which appear at some point in our analysis.

\section{Outline of the proof of Theorem~\ref{thm:patch-exist}}
\label{sec:strategy}

In this section we describe in detail our strategy to prove Theorem~\ref{thm:patch-exist}, 
and we state all the intermediate results. The proofs of the later are postponed
to Section~\ref{sec:proofs}.

\subsection{Construction of the co-rotating vortex patches}

The Hamiltonian nature of the Euler equations (see e.g. Arnold~\cite{Arnold}) 
has long been a fruitful source for the construction of steady solutions. We
follow a similar strategy in the context of the~\eqref{gSQG} equation, and for that 
purpose we consider the energy of the fluid
\begin{equation}
\label{def:Es}
E_s(\omega) := \frac{1}{2} \int_{\RR^2} \int_{\RR^2} 
k_s(x - x') \omega(x) \omega(x') \, dx \, dx',
\end{equation}
and its impulse
\begin{equation}
\label{def:L}
L(\omega) := \int_{\RR^2} |x|^2 \omega(x) \, dx.
\end{equation}
Both quantities are conserved for smooth solutions of equation~\eqref{gSQG}, 
and at least formally equation~\eqref{eq:rotating frame dynamics} arises as 
the Euler-Lagrange equation associated to these two functionals for some
Lagrange multiplier $\alpha.$ 

Similar to the Euler equation, equation~\eqref{gSQG} is also a transport 
equation. In particular, any functional or constraint 
which is invariant by rearrangement is formally 
preserved by the flow. This applies to the total circulation of the fluid
\begin{equation}
\label{def:C}
M(\omega) := \int_{\RR^2} \omega(x) \, dx,
\end{equation}
and also to upper and lower bounds on the scalar function $\omega$, 
constraints that we use in the sequel.

\subsubsection{A constrained maximization problem with compact setting}

There are a number of obstacles to build a meaningful and effective 
variational problem associated to the functionals $E_s$, $L$ and $M.$ 

First, the energy functional $E_s$ is unbounded under $L^1$-type constraints 
such as those given by $L$ or $M$. To overcome this, for $\lambda > 0$ 
we consider the additional $L^\infty$-type contraints set 
$$
L_\lambda^\infty(\RR^2) := \Big\{ \omega \in L^\infty(\RR^2) : 0 
\leq \omega \leq \lambda \text{ a.e.} \Big\}.
$$

Next, the physical domain is infinite and the functional spaces on which $E_s$, 
$L$ and $M$ are naturally defined are different. Following Turkington 
\cite{Turking3} in his study of vortex patches for the Euler equation, we 
consider the section of annulus
$$
S := \Big\{ (r \cos(\theta), r \sin(\theta)) : \frac{1}{2} \leq r \leq 2 
\text{ and } - \frac{\pi}{2N} \leq \theta \leq\frac{\pi}{2N} \Big\},
$$
and further restrict our study to 
the function set
\begin{equation}
\label{def:Xlambda}
X_\lambda := \Big\{ \omega \in L^\infty_\lambda(\RR^2) \text{ s.t. } 
\omega \circ R_\frac{2\pi}{N} = \omega \text{ and } \omega = 0 \text{ a.e. on } 
\mfS_N \setminus S \Big\}.
\end{equation}
The symmetry assumption is natural since it reflects the ones of the
functionals, but the imposed vanishing of $\omega$ in $\mfS_N \setminus S$ 
is evidently not a natural constraint, it makes the functional setting 
easier and provides some form of compactness to the problem, but eventually we
will need to show that it is
not activated. The introduction of $S$ is motivated by the fact that, 
in the limit $\lambda \to \infty$ and in view of the symmetry assumption, 
the vortex patches which we will construct are expected to be supported 
into $N$ small balls centered at the $N$ vertices 
$x_n = (\cos(2 n \pi/N), \sin(2 n \pi/N))$ of a regular polygon.

Observe that for functions $\omega \in X_\lambda$ the energy $E_s$ may be rewritten as
\begin{equation}
\label{eq:polar-Es}
E_s(\omega) = \frac{N}{2} \int_S \int_S \kappa_s(r, r', \theta - \theta') \, \omega(r, \theta) \, \omega(r', \theta') \, r' \, dr' \, d\theta' r \, dr \, d\theta,
\end{equation}
due to the symmetry and support properties of the function $\omega$, and where the kernel $\kappa_s$ is equal to
\begin{equation}
\label{def:kappa-s}
\kappa_s(r, r', \xi) := \sum_{n = 0}^{N - 1} \frac{c_s}{\Big(r^2 + r'^2 - 2 r r' \cos\big(\xi-\frac{2n\pi}{N}\big) \Big)^{1 - s}}.
\end{equation}
For further use, let us also define the function
$$
K_s \omega(r, \theta) := \int_S \kappa_s(r, r', \theta - \theta') \,
\omega(r', \theta') \, r' \, dr' \, d\theta'.
$$

\medskip

We consider the maximization problem under constraint
\begin{equation}
\label{def:max-prob-S}
\tag{\mbox{$\mathcal{P}_\lambda$}}
\cE_\lambda := \sup \Big\{ E_s(\omega) : \omega \in X_\lambda \text{ s.t. } M(\omega) = L(\omega) = N \Big\}.
\end{equation}

It is straightforward to check that the constraint set is non empty when
$\lambda$ is chosen large enough. Let $\varepsilon \equiv
\varepsilon(\lambda)$ be the length-scale defined by the identity
\begin{equation}
\label{def:varepsilon}
\lambda \pi \varepsilon^2 = 1.
\end{equation}
There exists (a unique) $1 - \varepsilon \leq r_\varepsilon
\leq 1 + \varepsilon$ such that the function $\varpi_\lambda$ given by
\begin{equation}
\label{eq:overline omega}
\varpi_\lambda := \sum_{n = 0}^{N - 1}\lambda \mathbbm{1}_{\cB \big( 
R_\frac{2n\pi}{N}(r_\varepsilon, 0), \varepsilon \big)},
\end{equation}
satisfies the constraints $M(\varpi_\lambda) = L(\varpi_\lambda) = N$. It is
clear also that this function belongs to $X_\lambda$ for
$\lambda$ large enough, that is for $\varepsilon$ small enough. 
\smallskip

Invoking the boundedness of the set $S$ and the Sobolev embedding theorem 
of $L^\infty(S)$ into $H^{- s}(S)$, it is also rather straightforward to conclude 
that the maximization problem~\eqref{def:max-prob-S} possesses a solution 
for all $\lambda$ large enough. We will prove 
\begin{Prop}
\label{prop:patch-exist}
Let $0 < s < 1$ be given, then for $\lambda$ sufficiently large there exists an angular Steiner symmetric function $\omega_\lambda \in X_\lambda$ such that
$$E_s(\omega_\lambda) = \cE_\lambda.$$
Moreover, there exist two numbers $\alpha_\lambda$ and $\mu_\lambda$ such that
\begin{equation}
\label{eq:omega-lambda}
\omega_\lambda = \lambda \mathbbm{1}_{\{ \psi_\lambda > 0 \}},
\end{equation}
where we have set
\begin{equation}
\label{def:psi-lambda-S}
\psi_\lambda := K_s \omega_\lambda + \frac{\alpha_\lambda}{2} |\cdot|^2 - \mu_\lambda.
\end{equation}
When $1/2 \leq s < 1$, the function $\psi_\lambda$ belongs to $W^{1, q}(S)$ for any real number $q > 1$, and it satisfies 
\begin{equation}
\label{eq:weak-formulation-S}
\int \omega_\lambda(x) \, \nabla^\perp \psi_\lambda(x) \cdot \nabla \varphi(x) \, dx = 0,
\end{equation}
for any function $\varphi \in \cC_c^\infty(S)$.
\end{Prop}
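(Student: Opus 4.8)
The plan is to establish Proposition~\ref{prop:patch-exist} in three stages: existence of a maximizer, derivation of the Euler--Lagrange equation in the bang-bang form~\eqref{eq:omega-lambda}, and the weak formulation for $s \geq 1/2$. For \textbf{existence}, I would take a maximizing sequence $(\omega_k)$ for~\eqref{def:max-prob-S}. Since $0 \leq \omega_k \leq \lambda$ on the bounded set $S$, the sequence is bounded in $L^2(S)$, so up to a subsequence $\omega_k \rightharpoonup \omega_\lambda$ weakly in $L^2(S)$. The constraint set $\{0 \leq \omega \leq \lambda\}$ together with the symmetry~\eqref{eq:symmetry property} and the support condition $\omega = 0$ on $\mfS_N \setminus S$ are all preserved under weak $L^2$ limits (they are convex and closed). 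The linear constraints $M(\omega) = L(\omega) = N$ also pass to the limit since $1$ and $|x|^2$ are bounded on $S$. The crucial point is that $E_s$ is weakly continuous: by the Sobolev embedding $L^\infty(S) \hookrightarrow L^2(S) \hookrightarrow H^{-s}(S)$ mentioned in the excerpt, the quadratic form $E_s$ is continuous on $H^{-s}$, and the kernel $\kappa_s$ being positive-definite makes $E_s$ essentially a squared Sobolev norm, hence weakly lower semicontinuous; combined with the compact embedding $L^2(S) \hookrightarrow\hookrightarrow H^{-s}(S)$ for $s>0$, one in fact gets $E_s(\omega_k) \to E_s(\omega_\lambda)$, so $\omega_\lambda$ is a maximizer. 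Applying the angular Steiner symmetrization (Appendix~A) does not decrease $E_s$ while preserving the constraints, so the maximizer may be taken angular Steiner symmetric.

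For the \textbf{Euler--Lagrange structure}, I would perform variations within the constraint set. Given the maximizer $\omega_\lambda$ and an admissible variation, the first-order optimality condition for maximizing the quadratic $E_s$ subject to the two linear constraints $M = L = N$ and the pointwise bounds $0 \leq \omega \leq \lambda$ yields Lagrange multipliers $\mu_\lambda$ (for $M$) and $-\alpha_\lambda/2$ (for $L$), with the bound constraints handled via a bathtub/bang-bang argument. Writing $\psi_\lambda := K_s \omega_\lambda + \frac{\alpha_\lambda}{2}|\cdot|^2 - \mu_\lambda$ as in~\eqref{def:psi-lambda-S}, the optimality condition forces $\omega_\lambda = \lambda$ a.e. on $\{\psi_\lambda > 0\}$ and $\omega_\lambda = 0$ a.e. on $\{\psi_\lambda < 0\}$. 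The delicate point is ruling out a nontrivial intermediate level set $\{\psi_\lambda = 0\}$ where $\omega_\lambda$ could take interior values: I would argue that $\psi_\lambda = 0$ has measure zero using the regularity of $K_s \omega_\lambda$ together with the fact that $|\cdot|^2$ is real-analytic and nonconstant, so that the superlevel set structure is rigid enough to give the sharp characterization~\eqref{eq:omega-lambda}. This is where the argument must be handled carefully; in the Euler case Turkington~\cite{Turking3} relies on the analyticity of the stream function, and an analogous nondegeneracy of $\psi_\lambda$ on the relevant level set is what is needed here.

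For part concerning the \textbf{$W^{1,q}$ regularity and the weak formulation}, I would invoke the standard elliptic theory for Riesz potentials already cited in the excerpt: since $\omega_\lambda \in L^\infty$ with compact support, $k_s \star \omega_\lambda \in \dot W^{2s,q}$ for all $1 < q < \infty$, and restricting to $S$ (after unfolding the $N$-fold sum into $\kappa_s$) gives $K_s \omega_\lambda \in W^{2s,q}(S)$. When $s \geq 1/2$ we have $2s \geq 1$, so $K_s\omega_\lambda \in W^{1,q}(S)$, and adding the smooth term $\frac{\alpha_\lambda}{2}|\cdot|^2$ keeps $\psi_\lambda \in W^{1,q}(S)$ for every $q > 1$. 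To obtain~\eqref{eq:weak-formulation-S}, I would use the co-area/chain-rule structure: because $\omega_\lambda = \lambda \mathbbm{1}_{\{\psi_\lambda > 0\}}$, the vector field $\omega_\lambda \nabla^\perp \psi_\lambda$ is, formally, $\lambda$ times $\nabla^\perp$ of $\psi_\lambda^+$, which is divergence-free in the distributional sense. More precisely, $\nabla^\perp \psi_\lambda \cdot \nabla \varphi = \di(\varphi \nabla^\perp \psi_\lambda) - \varphi\, \di(\nabla^\perp \psi_\lambda)$, and $\di \nabla^\perp \psi_\lambda = 0$; the factor $\mathbbm{1}_{\{\psi_\lambda > 0\}}$ is handled by approximating $\psi_\lambda^+$ and using that $\nabla \psi_\lambda = 0$ a.e. on $\{\psi_\lambda = 0\}$ (a consequence of $\psi_\lambda \in W^{1,q}$ together with the measure-zero level-set fact from the previous step).

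\textbf{Main obstacle.} I expect the hardest step to be the bang-bang characterization~\eqref{eq:omega-lambda}, specifically showing that $\{\psi_\lambda = 0\}$ is negligible so that no intermediate plateau of $\omega_\lambda$ survives. The weak formulation~\eqref{eq:weak-formulation-S} for $s \geq 1/2$ then follows relatively cleanly once this structure and the $W^{1,q}$ regularity are in hand, while the existence step is the most routine, resting on weak compactness in $L^2(S)$ and the weak continuity of $E_s$ furnished by the compact embedding into $H^{-s}(S)$.
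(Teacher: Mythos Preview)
Your approach differs substantially from the paper's, which does \emph{not} argue directly on the constrained problem~\eqref{def:max-prob-S}. Instead, the paper introduces the penalized functionals $E_{\lambda,p}$ of~\eqref{eq:penalized_energy}, solves the corresponding unconstrained-in-$L^\infty$ problems (Lemma~\ref{lem:penalized-max}), obtains uniform bounds on the multipliers and solutions (Lemmas~\ref{lem:alpha_mu_bounded} and~\ref{lem:omega-bounded}), and passes to the limit $p\to\infty$. Your direct existence argument by weak $L^2$ compactness and the compact embedding into $H^{-s}(S)$ is fine and arguably more economical for the bare existence of a maximizer; the paper explicitly warns, however, that deriving~\eqref{eq:omega-lambda} and~\eqref{eq:weak-formulation-S} directly ``seems difficult'' because the $L^\infty$ constraint is activated, which is why it detours through penalization.

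The genuine gap in your proposal is precisely where you flag it: the claim that $\{\psi_\lambda=0\}$ has measure zero. Your suggestion to rely on ``the regularity of $K_s\omega_\lambda$ together with the fact that $|\cdot|^2$ is real-analytic'' and to mimic Turkington's analyticity argument does not go through for $0<s<1$: the function $K_s\omega_\lambda$ is only in $W^{2s,q}$ and has no analyticity on or near the support of $\omega_\lambda$, so there is no rigidity principle of the type you invoke. The paper handles this by a completely different mechanism: it first uses the bathtub principle for Riesz integrals (Lemma~\ref{lem:Riesz-bathtub}) applied fibrewise in the angular variable to force $\omega_\lambda$ to be $\lambda$ times a characteristic function, and then shows that for each fixed $r$ the map $\theta\mapsto\psi_\lambda(r,\theta)$ is \emph{strictly decreasing} on $(0,\pi/2N)$. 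This strict angular monotonicity is a nontrivial consequence of the strict monotonicity of the kernel $\xi\mapsto\kappa_s(r,r',\xi)$, proved in Lemma~\ref{lem:combinatorics} via a delicate combinatorial/inductive argument on trigonometric sums. With this in hand, Fubini immediately gives $\meas\{\psi_\lambda=0\}=0$. Your sketch contains no counterpart to this key monotonicity ingredient, and without it the bang-bang identity~\eqref{eq:omega-lambda} is not established.
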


Note that at this level the weak formulation~\eqref{eq:weak-formulation-S} is only
known to be valid for test functions whose support is contained in $S$. This is
related to the aforementioned vanishing constraint of $\omega$ in $\mfS_N
\setminus S$, and will be dealt with in Section~\ref{sub:support} below.

\subsubsection{The penalized problems}

Our proof of Proposition~\ref{prop:patch-exist} does not rely on a direct
variational argument either. Because of the (eventually activated) $L^\infty$-constraint 
on $\omega$, it seems indeed difficult to derive~\eqref{eq:omega-lambda} and~\eqref{eq:weak-formulation-S}
in that way. Instead, we introduce the penalized functionals
\begin{equation}
\label{eq:penalized_energy}
E_{\lambda, p}(\omega) := E_s(\omega) - \frac{\lambda N}{p} \int_S \bigg( \frac{\omega}{\lambda} \bigg)^p,
\end{equation}
for any number $p > 1/s$, as well as the corresponding maximization
problems
\begin{equation}
\label{def:pen-max-prob}
\cE_{\lambda, p} := \sup \Big\{ E_{\lambda, p}(\omega) : \omega \in L_+^p(S) 
\text{ s.t. } M(\omega) = L(\omega) = 1 \Big\},
\end{equation}
where the functionals $M$ and $L$ are defined here as in~\eqref{def:C} and
\eqref{def:L} but with integration domain restricted to $S.$ 
Due to the boundedness of the set $S$, we observe that
$$\sup_{(r, \theta) \in S} \int_S \kappa_s(r, r', \theta - \theta')^q \, dr' \, d\theta' < \infty,$$
when $1 \leq q < 1/(1 - s)$. Hence it follows from the H\"older inequality that
both the terms in the right-hand side of~\eqref{eq:penalized_energy} are
well-defined when the vorticity $\omega$ lies in $L^p(S)$ with $p > 1/s$. In
this case, the quantities $M(\omega)$ and $L(\omega)$ are also well-defined, so
that the maximization problem~\eqref{def:pen-max-prob} makes sense. Moreover, we
can solve this problem as follows.

\begin{Lemma}
\label{lem:penalized-max}
Let $0 < s < 1$, $\lambda > 0$ and $p > 1/s$. Denote by $p'$ the H\"older
conjugate of $p$ defined by $1/p + 1/p' = 1$. There exists a function
$\omega_{\lambda, p} \in L_+^p(S)$ such that
\begin{equation}
\label{eq:pen-max-prob}
E_{\lambda, p}(\omega_{\lambda, p}) = \max \Big\{ E_{\lambda, p}(\omega) : \omega \in L_+^p(S) 
\text{ s.t. } M(\omega) = L(\omega) = 1 \Big\}.
\end{equation}
The function $\omega_{\lambda, p}$ is angular Steiner symmetric and bounded.
Moreover, there exist two numbers $\alpha_{\lambda, p}$ and $\mu_{\lambda, p}$
such that the function $\omega_{\lambda, p}$ can be written as
\begin{equation}
\label{eq:max-prob-eq}
\omega_{\lambda ,p} = \lambda(\psi_{\lambda, p})_+^{p' - 1},
\end{equation}
with
\begin{equation}
\label{eq:max-prob-psi}
\psi_{\lambda, p} := K_s \, \omega_{\lambda, p} + \frac{\alpha_{\lambda, p}}{2} |\cdot|^2 - \mu_{\lambda, p}.
\end{equation}
\end{Lemma}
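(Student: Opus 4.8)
The plan is to solve \eqref{def:pen-max-prob} by the direct method of the calculus of variations and then to extract the pointwise relation \eqref{eq:max-prob-eq} from the associated Euler--Lagrange equation. First I would record the basic estimates. Since $p>1/s$, its H\"older conjugate satisfies $p'<1/(1-s)$, so the bound $\sup_{(r,\theta)\in S}\int_S \kappa_s(r,r',\theta-\theta')^{p'}\,dr'\,d\theta'<\infty$ noted before the statement applies. By H\"older this yields $\|K_s\omega\|_{L^\infty(S)}\le C\|\omega\|_{L^p(S)}$, and, using the continuity of the kernel in its first pair of arguments together with Arzel\`a--Ascoli, the operator $K_s$ is in fact compact from $L^p(S)$ into $C(S)$. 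Writing $E_s(\omega)=\tfrac{N}{2}\int_S(K_s\omega)\,\omega\,dx$ then gives $E_s(\omega)\le C\|\omega\|_{L^p(S)}\|\omega\|_{L^1(S)}$, so that on the constraint set $\{M(\omega)=L(\omega)=1\}$ one has
\begin{equation*}
E_{\lambda,p}(\omega)\le C\|\omega\|_{L^p(S)}-\frac{N}{p\,\lambda^{p-1}}\|\omega\|_{L^p(S)}^p.
\end{equation*}
As $p>1$, this is bounded from above and coercive in $L^p(S)$, so $\cE_{\lambda,p}$ is finite and every maximizing sequence is bounded in $L^p(S)$.

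Next I would run the direct method. Extract a weakly convergent subsequence $\omega_n\rightharpoonup\omega_{\lambda,p}$ in $L^p(S)$; the cone $L_+^p(S)$ is convex and closed, hence weakly closed, so $\omega_{\lambda,p}\ge 0$. The constraints $M$ and $L$ are continuous linear functionals on $L^p(S)$ (since $1,|x|^2\in L^{p'}(S)$ on the bounded set $S$), hence weakly continuous, so $M(\omega_{\lambda,p})=L(\omega_{\lambda,p})=1$. By compactness of $K_s$ one has $K_s\omega_n\to K_s\omega_{\lambda,p}$ strongly, whence $E_s(\omega_n)\to E_s(\omega_{\lambda,p})$, while the convex penalization term is weakly lower semicontinuous; therefore $E_{\lambda,p}$ is weakly upper semicontinuous and $\omega_{\lambda,p}$ is a maximizer. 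To gain the angular Steiner symmetry, I would replace $\omega_{\lambda,p}$ by its symmetrization $\omega_{\lambda,p}^\sharp$: it lies in $L_+^p(S)$, preserves $M$, $L$ and the $L^p$ norm (hence the penalization), and does not decrease $E_s$ by the rearrangement inequality for the kernel $\kappa_s$ established in Appendix~\ref{sec:angular-Steiner}; thus the symmetrized function is again a maximizer.

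It then remains to derive \eqref{eq:max-prob-eq}. The functional $E_{\lambda,p}$ is Fr\'echet differentiable on $L_+^p(S)$ with $\mathrm dE_{\lambda,p}(\omega)[h]=N\int_S\big(K_s\omega-\lambda^{1-p}\omega^{p-1}\big)h\,dx$, and $M,L$ are linear; since $1$ and $|x|^2$ are linearly independent on the positive-measure set $\{\omega_{\lambda,p}>0\}$, the map $\omega\mapsto(M(\omega),L(\omega))$ is a submersion at $\omega_{\lambda,p}$, which is the required constraint qualification. Applying the Lagrange multiplier rule, together with the one-sided variations allowed by the obstacle $\omega\ge 0$, I would obtain numbers $\mu_{\lambda,p}$ and $\alpha_{\lambda,p}$ such that, with $\psi_{\lambda,p}$ as in \eqref{eq:max-prob-psi}, one has $\lambda^{1-p}\omega_{\lambda,p}^{p-1}=\psi_{\lambda,p}$ on $\{\omega_{\lambda,p}>0\}$ and $\psi_{\lambda,p}\le 0$ on $\{\omega_{\lambda,p}=0\}$. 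Since $1/(p-1)=p'-1$, this is exactly $\omega_{\lambda,p}=\lambda(\psi_{\lambda,p})_+^{p'-1}$. Boundedness then follows by bootstrap: $\omega_{\lambda,p}\in L^p(S)$ gives $K_s\omega_{\lambda,p}\in L^\infty(S)$ by the first step, hence $\psi_{\lambda,p}\in L^\infty(S)$ and finally $\omega_{\lambda,p}=\lambda(\psi_{\lambda,p})_+^{p'-1}\in L^\infty(S)$.

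The main obstacle I anticipate is the rigorous Euler--Lagrange step: one must simultaneously handle the two equality constraints (checking the constraint qualification so that the multipliers $\alpha_{\lambda,p},\mu_{\lambda,p}$ genuinely exist) and the inequality constraint $\omega\ge 0$, which only permits outward variations on $\{\omega=0\}$ and is precisely what produces the positive-part truncation rather than a plain power law. Ensuring that the first variation is legitimate (differentiability of $\omega\mapsto\int_S\omega^p$ in $L^p$ and measurability of the relevant level sets) and that the resulting variational inequality closes up into the stated pointwise identity is where the care is needed.
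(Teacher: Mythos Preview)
Your proposal is correct and follows essentially the same route as the paper: the direct method with coercivity in $L^p$, weak upper semicontinuity of $E_{\lambda,p}$, replacement by the angular Steiner symmetrization via Lemmas~\ref{lem:inv-C-L}--\ref{lem:symmetry}, an Euler--Lagrange step using the linear independence of $1$ and $|x|^2$ on $\{\omega_{\lambda,p}>0\}$ to produce the multipliers and handle the obstacle $\omega\ge 0$, and finally the bootstrap $K_s\omega_{\lambda,p}\in L^\infty$ to get boundedness. The only cosmetic differences are that you invoke compactness of $K_s:L^p(S)\to C(S)$ for the convergence of $E_s$ whereas the paper argues directly via $\kappa_s\in L^{p'}(S^2)$, and you phrase the multiplier step abstractly while the paper writes out explicit admissible perturbations $\omega_h=\omega_{\lambda,p}+h(\zeta-M(\zeta)\varphi_1-L(\zeta)\varphi_2)$ supported on level sets $\{\omega_{\lambda,p}\ge\delta\}$.
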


The proof of Lemma~\ref{lem:penalized-max} is standard. Existence follows from
bounding the $L^p$-norm of the minimizing sequences corresponding
to~\eqref{def:pen-max-prob} and from applying standard weak compactness results.
The angular Steiner symmetry of the function $\omega_{\lambda, p}$ is a
consequence of Lemmas~\ref{lem:inv-C-L} and~\ref{lem:symmetry}.
Equations~\eqref{eq:max-prob-eq} and~\eqref{eq:max-prob-psi} are no more than
the Euler-Lagrange equations of the maximization
problem~\eqref{def:pen-max-prob}. In particular, the numbers $\alpha_{\lambda,
p}$ and $\mu_{\lambda, p}$ are interpreted as the Lagrange multipliers of the
problem. We refer to Subsection~\ref{sub:penalized-max} for more details and the
proofs.

\subsubsection{Bounds on the maximizers $\omega_{\lambda, p}$ and the Lagrange multipliers $\alpha_{\lambda, p}$ and $\mu_{\lambda, p}$}

Our goal is now to construct solutions of the maximization problems~\eqref{def:max-prob-S} as limits when $p \to \infty$ of the functions $\omega_{\lambda, p}$ of Lemma~\ref{lem:penalized-max}. Before passing to this limit, we establish that the Lagrange multipliers $\alpha_{\lambda, p}$ and $\mu_{\lambda, p}$ are bounded uniformly with respect to $p$.

\begin{Lemma}
\label{lem:alpha_mu_bounded}
Let $0 < s < 1$ and $\lambda > 0$. There exists a positive number $p_0 > 1/s$ such that there exists a positive number $C_\lambda$ independent of $p$ for which the Lagrange multipliers $\alpha_{\lambda, p}$ and $\mu_{\lambda, p}$ in Lemma~\ref{lem:penalized-max} satisfy
$$|\alpha_{\lambda, p}| + |\mu_{\lambda, p}| \leq C_\lambda,$$
for any $p \geq p_0$.
\end{Lemma}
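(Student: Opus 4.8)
The plan is to reduce the uniform control of both Lagrange multipliers to a single $p$-uniform bound on $\big\|(\psi_{\lambda,p})_+\big\|_{L^\infty(S)}$. First I would record the a priori estimates that follow from the maximality of $\omega_{\lambda,p}$ in~\eqref{def:pen-max-prob}. Testing against the (single–patch analogue of the) competitor from~\eqref{eq:overline omega} shows $\mathcal E_{\lambda,p}\geq c_\lambda>0$, and since $E_{\lambda,p}(\omega_{\lambda,p})=E_s(\omega_{\lambda,p})-\tfrac{\lambda N}{p}\int_S(\omega_{\lambda,p}/\lambda)^p$, a Hardy--Littlewood--Sobolev bound for the kernel $\kappa_s$ in~\eqref{def:kappa-s} yields $E_s(\omega_{\lambda,p})\leq C\|\omega_{\lambda,p}\|_{L^q(S)}^2$ for a fixed $q>1/s$; combined with Hölder interpolation this forces both $E_s(\omega_{\lambda,p})$ and the penalization term to be bounded, and then $\|\omega_{\lambda,p}\|_{L^q(S)}\leq C_\lambda$ uniformly in $p$. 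By the mapping properties of the Riesz potential, $K_s\omega_{\lambda,p}$ is then bounded in $C^{0,\gamma}(S)$ with $\gamma=2s-2/q\in(0,1)$, uniformly in $p$. Writing $e_p:=\tfrac12\int_S\omega_{\lambda,p}\,K_s\omega_{\lambda,p}$ (so $0\le e_p\le C_\lambda$) and $Q_p:=\lambda\int_S(\psi_{\lambda,p})_+^{p'}$, I would then extract two identities from~\eqref{eq:max-prob-eq}--\eqref{eq:max-prob-psi}. Testing the Euler--Lagrange equation against $\omega_{\lambda,p}$ and using $M(\omega_{\lambda,p})=L(\omega_{\lambda,p})=1$ gives the \emph{moment identity}
\[
Q_p \;=\; 2e_p + \tfrac{1}{2}\,\alpha_{\lambda,p}-\mu_{\lambda,p}.
\]
A \emph{scaling identity} comes from the mass–preserving dilation $\omega_t:=t^{-2}\omega_{\lambda,p}(\cdot/t)$: since each term of $\kappa_s$ is homogeneous of degree $-2(1-s)$, one has $E_s(\omega_t)=t^{-2(1-s)}E_s(\omega_{\lambda,p})$, $M(\omega_t)=M(\omega_{\lambda,p})$, $L(\omega_t)=t^2L(\omega_{\lambda,p})$, while the penalization scales as $t^{2-2p}$; the first–order optimality condition then yields
\[
\alpha_{\lambda,p}\;=\;2(1-s)\,e_p-2\Big(1-\tfrac1p\Big)Q_p .
\]

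The heart of the argument is the uniform bound on $T:=\big\|(\psi_{\lambda,p})_+\big\|_{L^\infty(S)}$, obtained by playing the \emph{nonlinear} mass constraint $1=M(\omega_{\lambda,p})=\lambda\int_S(\psi_{\lambda,p})_+^{p'-1}$ against the concentration of the maximizer. Let $x_0\in S$ be a point where $\psi_{\lambda,p}$ attains the value $T$. The decomposition $\psi_{\lambda,p}=K_s\omega_{\lambda,p}+\tfrac{\alpha_{\lambda,p}}{2}|\cdot|^2-\mu_{\lambda,p}$ splits into a part that is uniformly Hölder (with seminorm $\leq C_\lambda$, independent of $\alpha_{\lambda,p}$) and a \emph{radial} part $\tfrac{\alpha_{\lambda,p}}{2}|\cdot|^2$. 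Choosing the radial direction in which this radial part is non-decreasing, I would produce a two-dimensional box around $x_0$, of measure $\geq c_\lambda\,T^{2/\gamma}$, on which $\psi_{\lambda,p}\geq T/2$, \emph{with constants independent of $\alpha_{\lambda,p}$}. Feeding this into the mass constraint gives
\[
1\;\geq\;\lambda\,(T/2)^{p'-1}\,c_\lambda\,T^{2/\gamma},
\qquad\text{hence}\qquad
T\;\leq\;\big(C_\lambda/\lambda\big)^{1/(p'-1+2/\gamma)}\;\leq\;C_\lambda
\]
for $p\ge p_0$, since $p'\to1$. With $T$ bounded, Hölder's inequality gives $Q_p\le T\cdot\lambda\int_S(\psi_{\lambda,p})_+^{p'-1}=T\le C_\lambda$; the scaling identity then bounds $|\alpha_{\lambda,p}|\le 2(1-s)e_p+2Q_p\le C_\lambda$, and finally the moment identity bounds $|\mu_{\lambda,p}|=|2e_p+\tfrac12\alpha_{\lambda,p}-Q_p|\le C_\lambda$.

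I expect the main obstacle to lie exactly in this uniform sup bound, and for two intertwined reasons. The first is structural: on $S$ one has $|x|^2\approx1$, so the two constraints $M$ and $L$ are nearly parallel, and any attempt to separate the multipliers by testing the Euler--Lagrange equation against moments of $\omega_{\lambda,p}$ degenerates (the relevant determinant is the radial variance $\int_S(|x|^2-1)^2\omega_{\lambda,p}$, which a priori could collapse as $p\to\infty$). This is precisely what the homogeneity-based scaling identity circumvents. The second reason is that the bound on $T$ must not feed back on $\alpha_{\lambda,p}$: keeping the box estimate independent of the angular velocity requires using the monotonicity of the radial term, which in turn forces care when the maximum of $\psi_{\lambda,p}$ sits near $\partial S$. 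The same boundary issue affects the admissibility of the dilation $\omega_t$ (its support must remain in $S$), so the scaling identity should be justified either by first localizing the support away from $\partial S$, or by working with one-sided variations together with a truncation correction that vanishes in the relevant regime. Once these two points are handled, the three displayed relations close the estimate and deliver the constant $C_\lambda$ independent of $p$.
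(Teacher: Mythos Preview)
Your approach is genuinely different from the paper's, but both of your key steps share a boundary obstruction that you flag yet do not actually overcome, and this makes the argument circular.

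The scaling identity $\alpha_{\lambda,p}=2(1-s)e_p-2(1-\tfrac1p)Q_p$ comes from differentiating the Lagrangian along the dilation $\omega_t=t^{-2}\omega_{\lambda,p}(\cdot/t)$. But $S$ is an \emph{annular} sector: contracting ($t<1$) can push mass through the inner boundary $r=\tfrac12$, expanding ($t>1$) through the outer boundary $r=2$, so in general neither one–sided family stays in $S$. One–sided variations give only inequalities, and a Pohozaev computation directly from~\eqref{eq:max-prob-eq} produces boundary terms of the form $\int_{\partial S}(\psi_{\lambda,p})_+^{p'}\,x\cdot\nu$ which you cannot discard without already knowing that $(\psi_{\lambda,p})_+$ vanishes near $\partial S$. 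Likewise, your box estimate for $T=\|(\psi_{\lambda,p})_+\|_{L^\infty}$ relies on moving in the radial direction in which $\tfrac{\alpha_{\lambda,p}}{2}|x|^2$ is nondecreasing; if the maximum sits at $r_0=2$ with $\alpha_{\lambda,p}>0$ (or at $r_0=\tfrac12$ with $\alpha_{\lambda,p}<0$), that direction has zero extent, and in the opposite direction the drop of $\psi_{\lambda,p}$ is governed by $|\alpha_{\lambda,p}|$, destroying the $\alpha$--independence you need. Ruling out these boundary scenarios amounts to controlling the sign and size of $\alpha_{\lambda,p}$, which is what you are trying to prove. Finally, even if $T$ and $Q_p$ were bounded, the moment identity alone only yields $|\tfrac12\alpha_{\lambda,p}-\mu_{\lambda,p}|\le C_\lambda$; a second independent relation is essential, and it is precisely the scaling identity whose validity is in question.

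The paper sidesteps all of this. It first proves the single estimate $\|K_s\omega_{\lambda,p}\|_{L^\infty(S)}\le C_\lambda$ uniformly in $p$ (this follows from~\eqref{eq:estim-Ks} and~\eqref{eq:bound_Lp}), which gives the two-sided bound
\[
\tfrac{\alpha_{\lambda,p}}{2}r^2-\mu_{\lambda,p}-C_\lambda\;\le\;\psi_{\lambda,p}(r,\theta)\;\le\;\tfrac{\alpha_{\lambda,p}}{2}r^2-\mu_{\lambda,p}+C_\lambda.
\]
Then, assuming $\mu_{\lambda,p}\to+\infty$, one reads off that the support of $\omega_{\lambda,p}$ (where $\psi_{\lambda,p}>0$) is trapped between explicit radii $r_0^*=\sqrt{2(\mu_{\lambda,p}-C_\lambda)/\alpha_{\lambda,p}}$ and $r_1^*=\sqrt{2(\mu_{\lambda,p}+C_\lambda)/\alpha_{\lambda,p}}$, with $r_0^*/r_1^*\to1$. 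A short case analysis on $\limsup r_1^*$ then contradicts the pair of constraints $M(\omega_{\lambda,p})=L(\omega_{\lambda,p})=1$ (the second moment forces the mass to sit near $r=1$, but the sandwich forces it elsewhere). Once $\mu_{\lambda,p}$ is bounded, $\alpha_{\lambda,p}$ is bounded directly from $M(\omega_{\lambda,p})=1$. No Pohozaev identity, no control of the support relative to $\partial S$, and no Hölder regularity of $K_s\omega_{\lambda,p}$ is needed --- only its $L^\infty$ bound and the two scalar constraints.
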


This lemma follows from a uniform bound on the function $K_s \omega_{\lambda, p}$. As a consequence of the integrability properties of the kernel $K_s$, there exists a positive number $C_\lambda$ independent of $p$ such that
$$\big\| K_s \omega_{\lambda, p} \big\|_{L^\infty} \leq C_\lambda.$$
With this inequality at hand, we can control the support of the function $\psi_{\lambda, p}$ and then of the function $\omega_{\lambda, p}$. In case the Lagrange multipliers $\alpha_{\lambda, p}$ and $\mu_{\lambda, p}$ are not bounded independently of $p$, this control is enough to establish a contradiction with the constraints $M(\omega_{\lambda, p}) = L(\omega_{\lambda, p}) = 1$.

We next apply standard regularity estimates to the Euler-Lagrange equations~\eqref{eq:max-prob-eq} and~\eqref{eq:max-prob-psi} in order to bound the functions $\omega_{\lambda, p}$ and $\psi_{\lambda, p}$ uniformly with respect to $p$. When $1/2 \leq s < 1$, the functions $\omega_{\lambda, p}$ and $\psi_{\lambda, p}$ own sufficient smoothness so as to satisfy the weak formulation of~\eqref{eq:rotating frame dynamics} in~\eqref{eq:weak-formulation-S}. As a matter of fact, this weak formulation follows from the collinearity of the gradients $\nabla \omega_{\lambda, p}$ and $\nabla \psi_{\lambda, p}$, which in turn is a consequence of~\eqref{eq:max-prob-eq} provided that $\omega_{\lambda, p}$ and $\psi_{\lambda, p}$ are smooth enough. More precisely, we show

\begin{Lemma}
\label{lem:omega-bounded}
Let $0 < s < 1$, $\lambda > 0$ and $p \geq p_0$, where $p_0$ is defined in Lemma~\ref{lem:alpha_mu_bounded}. Consider the solution $\omega_{\lambda, p}$ to the maximization problem~\eqref{eq:pen-max-prob} constructed in Lemma~\ref{lem:penalized-max}, and set
$$\psi_{\lambda, p} = K_s \omega_{\lambda, p} + \frac{\alpha_{\lambda, p}}{2} |\cdot|^2 - \mu_{\lambda, p},$$
where $\alpha_{\lambda, p}$ and $\mu_{\lambda, p}$ are the corresponding Lagrange multipliers. There exists a positive number $C_\lambda$ independent of $p$ such that
\begin{equation}
\label{eq:bound-omega}
\| \omega_{\lambda, p} \|_{L^r} \leq C_\lambda,
\end{equation}
for any $1 \leq r \leq \infty$, while there exist positive numbers $C_{\lambda, r}(R)$, not depending on $p$, such that
\begin{equation}
\label{eq:bound-psi}
\| \psi_{\lambda, p} \|_{W^{2 s, r}(B(0, R))} \leq C_{\lambda, r}(R),
\end{equation}
for any $1 < r < \infty$ and any positive number $R$. In particular, when $1/2 \leq s < 1$, the functions $\omega_{\lambda, p}$ and $\psi_{\lambda, p}$ satisfy the weak equation
$$\int_S \omega_{\lambda, p}(x) \, \nabla^\perp \psi_{\lambda, p}(x) \cdot \nabla \varphi(x) \, dx = 0,$$
for any function $\varphi \in \cC_c^\infty(S)$.
\end{Lemma}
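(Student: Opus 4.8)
The plan is to read off everything from the Euler--Lagrange representation $\omega_{\lambda, p} = \lambda(\psi_{\lambda, p})_+^{p'-1}$ of Lemma~\ref{lem:penalized-max}, the uniform bound $|\alpha_{\lambda, p}| + |\mu_{\lambda, p}| \leq C_\lambda$ of Lemma~\ref{lem:alpha_mu_bounded}, and the uniform bound $\|K_s \omega_{\lambda, p}\|_{L^\infty(S)} \leq C_\lambda$ recorded just below it. First I would establish~\eqref{eq:bound-omega}. On $S$ one has $|x|^2 \leq 4$, so combining these three bounds in the definition $\psi_{\lambda, p} = K_s \omega_{\lambda, p} + \frac{\alpha_{\lambda, p}}{2}|\cdot|^2 - \mu_{\lambda, p}$ gives $\psi_{\lambda, p} \leq C_\lambda$ on $S$, uniformly in $p$. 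Since $p' - 1 = 1/(p-1) \in (0, 1/(p_0 - 1)]$ for $p \geq p_0$, the elementary inequality $t^{p'-1} \leq \max\{1, C_\lambda^{1/(p_0-1)}\}$ for $0 \leq t \leq C_\lambda$ then yields $\|\omega_{\lambda, p}\|_{L^\infty(S)} = \lambda \|(\psi_{\lambda, p})_+^{p'-1}\|_{L^\infty(S)} \leq C_\lambda$ uniformly in $p$, and the $L^r$ bounds for $1 \leq r < \infty$ follow from the finiteness of $|S|$.

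Next, for~\eqref{eq:bound-psi}, I would split $\psi_{\lambda, p}$ into the Riesz potential part $K_s \omega_{\lambda, p}$ and the quadratic part $\frac{\alpha_{\lambda, p}}{2}|\cdot|^2 - \mu_{\lambda, p}$. The latter is a polynomial whose coefficients are bounded uniformly in $p$ by Lemma~\ref{lem:alpha_mu_bounded}, hence has uniformly bounded $W^{2s, r}(B(0, R))$ norm. For the former, I would first observe that, by the $N$-fold symmetry~\eqref{eq:symmetry property} and the support in $S$, the function $K_s \omega_{\lambda, p}$ coincides on $S$ with the restriction of the Riesz potential $k_s \star \tilde\omega_{\lambda, p}$, where $\tilde\omega_{\lambda, p}$ is the full $N$-fold symmetric extension of $\omega_{\lambda, p}$ to $\RR^2$ (this is exactly the computation producing~\eqref{def:kappa-s}). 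I would then invoke the standard continuity of the Riesz potential $I_{2s} : L^r(\RR^2) \to \dot{W}^{2s, r}(\RR^2)$ for $1 < r < \infty$, together with the uniform $L^r$ bound on $\tilde\omega_{\lambda, p}$ from~\eqref{eq:bound-omega}, to control the homogeneous seminorm; adding the uniform $L^\infty$ bound on $K_s \omega_{\lambda, p}$ then provides the full $W^{2s, r}(B(0, R))$ norm.

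Finally, for the weak formulation when $1/2 \leq s < 1$, the idea is to exploit that $\omega_{\lambda, p}$ is a function of $\psi_{\lambda, p}$. I would set $G(t) := \frac{\lambda}{p'}(t_+)^{p'}$, so that $G \in \cC^1(\RR)$ with $G'(t) = \lambda(t_+)^{p'-1}$ and $\omega_{\lambda, p} = G'(\psi_{\lambda, p})$. Since $2s \geq 1$, bound~\eqref{eq:bound-psi} gives $\psi_{\lambda, p} \in W^{1, q}(S)$ for every $q < \infty$ by Sobolev embedding; as $\psi_{\lambda, p}$ is bounded and $G$ is $\cC^1$, the chain rule yields $G(\psi_{\lambda, p}) \in W^{1, q}(S)$ with $\nabla G(\psi_{\lambda, p}) = \omega_{\lambda, p} \nabla \psi_{\lambda, p}$, whence $\omega_{\lambda, p} \nabla^\perp \psi_{\lambda, p} = \nabla^\perp G(\psi_{\lambda, p})$. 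For $\varphi \in \cC_c^\infty(S)$, an integration by parts moving the derivatives onto $\varphi$ reduces the integral to $\int_S G(\psi_{\lambda, p})\,(\partial_1 \partial_2 \varphi - \partial_2 \partial_1 \varphi)\, dx = 0$, which vanishes by the symmetry of the second derivatives of $\varphi$.

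The main obstacle, and the reason the weak formulation is restricted to $s \geq 1/2$, lies precisely in this last step: the chain rule and the integration by parts require $\nabla \psi_{\lambda, p}$ to be a genuine $L^q$ function, which the elliptic bound~\eqref{eq:bound-psi} supplies only when $2s \geq 1$. A minor point worth handling cleanly is that the nonlinearity $t \mapsto \lambda(t_+)^{p'-1}$ is merely H\"older (its exponent $p'-1$ is below $1$ for large $p$), so one cannot differentiate it directly; passing through the $\cC^1$ antiderivative $G$ is what makes the chain rule licit. For $s < 1/2$, the potential $\psi_{\lambda, p}$ has regularity below $W^{1, 1}$, its gradient need not be a function, and the argument breaks down, consistent with the weak formulation not being asserted there.
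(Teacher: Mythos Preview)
Your proposal is correct and follows essentially the same approach as the paper: both arguments combine the uniform $L^\infty$ bound on $K_s\omega_{\lambda,p}$ with Lemma~\ref{lem:alpha_mu_bounded} to bound $\psi_{\lambda,p}$ and hence $\omega_{\lambda,p}$ via~\eqref{eq:max-prob-eq}, then invoke the $L^r \to \dot W^{2s,r}$ mapping property of the Riesz potential for~\eqref{eq:bound-psi}, and finally use the chain rule on the $\cC^1$ function $(\psi_{\lambda,p})_+^{p'}$ together with an integration by parts to obtain the weak formulation. Your explicit remark that one must pass through the $\cC^1$ antiderivative $G$ rather than differentiate the merely H\"older nonlinearity $t\mapsto (t_+)^{p'-1}$ is exactly the point underlying the paper's computation of $\nabla^\perp\big((\psi_{\lambda,p})_+^{p'}\big)$.
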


\subsubsection{Convergence in the limit $p \to \infty$ towards vortex patch solutions}

With Lemmas~\ref{lem:alpha_mu_bounded} and~\ref{lem:omega-bounded} at hand, we
are in position to complete the proof of Proposition~\ref{prop:patch-exist}.
Since the Lagrange multipliers $\alpha_{\lambda, p}$ and $\mu_{\lambda, p}$, as
well as the functions $\omega_{\lambda, p}$ and $\psi_{\lambda, p}$, are
uniformly bounded with respect to $p$, we can invoke a compactness argument in
order to take the limit $p \to \infty$. This eventually provides a limit
function $\omega_\lambda \in X_\lambda$, which maximizes the energy $E_s$ under
the constraints $M(\omega_\lambda) = L(\omega_\lambda) = 1$. We next derive from
the following lemma that this function is a vortex patch. 

\begin{Lemma}[Bathtub principle for Riesz integrals]
\label{lem:Riesz-bathtub}
Let $\mu$ and $\nu$ be two positive numbers. Given a positive number $\eta$, set
$$\cG_\eta(\RR) := \Big\{ g \in L^\infty(\RR, [0, 1]) : \int_\RR g \leq \eta \Big\},$$
and consider a function $f \in L^1(\RR)$, which is even and non-increasing on $\RR_+$. 
Then the maximization problem
\begin{equation}
\label{eq:double_integral_to_maximize}
\cI_{\mu, \nu} := \sup_{(g, h) \in \cG_\mu(\RR) \times \cG_\nu(\RR)} 
\int_\RR \int_\RR f(x - y) \, g(x) \, h(y) \, dx \, dy
\end{equation}
is solved by the functions
$$
	g = \mathbbm{1}_{\big[ - \frac{\mu}{2}, \frac{\mu}{2} \big]} \quad 
	\text{and} \quad h = \mathbbm{1}_{\big[ - \frac{\nu}{2},\frac{\nu}{2} \big]}.
$$
Moreover, when the function $f$ is decreasing on $\RR_+$, this solution is
unique (up to a translation).
\end{Lemma}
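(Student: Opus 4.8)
The plan is to reduce the maximization over the pair $(g,h)$ to two successive applications of the one-dimensional bathtub principle, after first symmetrizing the competitors by means of the Riesz rearrangement inequality. I first record that, being even, non-increasing on $\RR_+$ and integrable, the function $f$ is nonnegative, and that the double integral in~\eqref{eq:double_integral_to_maximize} is finite for admissible $(g,h)$ since it is bounded by $\|f\|_{L^1(\RR)}\,\nu$. In particular $f$ coincides with its own symmetric decreasing rearrangement $f=f^{*}$.

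\emph{Reduction to symmetric decreasing competitors.} Given $(g,h)\in\cG_\mu(\RR)\times\cG_\nu(\RR)$, denote by $g^{*}$ and $h^{*}$ their symmetric decreasing rearrangements. Since rearrangement preserves the distribution function, one has $0\le g^{*}\le 1$, $\int_\RR g^{*}=\int_\RR g\le\mu$, and similarly for $h^{*}$, so that $g^{*}\in\cG_\mu(\RR)$ and $h^{*}\in\cG_\nu(\RR)$. The Riesz rearrangement inequality then gives
$$\int_\RR\int_\RR f(x-y)\,g(x)\,h(y)\,dx\,dy\le\int_\RR\int_\RR f(x-y)\,g^{*}(x)\,h^{*}(y)\,dx\,dy,$$
so that it suffices to treat symmetric decreasing competitors.

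\emph{Two bathtub steps.} I will use the elementary fact that the convolution of two nonnegative symmetric decreasing functions is again symmetric decreasing; by the layer-cake representation this reduces to the convolution of indicators of centered intervals, which is a symmetric decreasing trapezoid. Thus $F:=f\star h^{*}$ is even and non-increasing on $\RR_+$, and, writing the right-hand side above as $\int_\RR g^{*}(x)\,F(x)\,dx$, the bathtub principle identifies $g_0:=\mathbbm{1}_{[-\mu/2,\mu/2]}$ as a maximizer over $\cG_\mu(\RR)$, since the superlevel sets of the symmetric decreasing $F$ of measure $\mu$ are exactly $(-\mu/2,\mu/2)$; hence $\int_\RR g^{*}F\le\int_\RR g_0 F$. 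Freezing $g_0$ and repeating the argument with the symmetric decreasing weight $G:=f\star g_0$ produces $h_0:=\mathbbm{1}_{[-\nu/2,\nu/2]}$. Chaining the two inequalities yields
$$\int_\RR\int_\RR f(x-y)\,g(x)\,h(y)\,dx\,dy\le\int_\RR\int_\RR f(x-y)\,g_0(x)\,h_0(y)\,dx\,dy,$$
and since $(g_0,h_0)$ is admissible this proves that it solves~\eqref{eq:double_integral_to_maximize}.

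\emph{Uniqueness and the main difficulty.} When $f$ is strictly decreasing on $\RR_+$, the weights $F$ and $G$ are strictly decreasing on $\RR_+$ (again by layer-cake, the convolution of a strictly decreasing even kernel with a nonzero centered interval is strictly decreasing), so the bathtub maximizer is unique; tracing back the equalities forces $g^{*}=g_0$ and $h^{*}=h_0$ almost everywhere, whence $g$ and $h$ are themselves indicators of sets of measure $\mu$ and $\nu$. The remaining and most delicate point is to locate these sets: equality must also hold in the Riesz rearrangement inequality, and the sharp characterization of its equality case (in the spirit of Burchard) forces $g$ and $h$ to be translates of $g_0$ and $h_0$ by one common vector, giving uniqueness up to translation. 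I expect this last step to be the main obstacle; in the one-dimensional setting at hand it may alternatively be handled by a direct argument showing that any displacement of the centers of $g$ and $h$ relative to one another strictly decreases the integral.
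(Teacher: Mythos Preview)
Your argument is correct and matches the paper's approach: reduce to symmetric decreasing competitors via the Riesz rearrangement inequality, then apply the bathtub principle in each variable. You are in fact more careful than the paper on the uniqueness step, correctly identifying that the ``up to translation'' clause ultimately rests on the equality case of the Riesz inequality; the paper's own proof is terser there and simply invokes the symmetry of the roles of $g$ and $h$ together with the strict bathtub principle.
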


The proof of this lemma consists in applying the bathtub principle (see
e.g.~\cite[Theorem 1.14]{LiebLos0}) to the double integral in the right-hand
side of~\eqref{eq:double_integral_to_maximize}. Double integrals taking this
form are usually called Riesz integrals in reference to the Riesz rearrangement
inequality~\cite{Riesz1}. For fixed numbers $r$ and $r'$, the integrals
depending on the variables $\theta$ and $\theta'$ in the
expression~\eqref{eq:polar-Es} of the energy $E_s$ are exactly of this form. In
particular, it suffices to apply Lemma~\ref{lem:Riesz-bathtub} to them in order
to prove that the functions $\omega_\lambda$ are the characteristic functions of
measurable sets $\Omega_\lambda$.

The proof of Proposition~\ref{prop:patch-exist} then reduces to establish that
these sets are equal to the upper level sets $\{ \psi_\lambda > 0 \}$. In the
limit $p \to \infty$, we deduce from~\eqref{eq:max-prob-eq} that
$$\{ \psi_\lambda > 0 \} \subset \Omega_\lambda \subset \{ \psi_\lambda \geq 0 \}.$$
It then results from the monotonicity properties of the kernel $\kappa_s$ given
by Lemma~\ref{lem:combinatorics} that the functions $\theta \mapsto
\psi_\lambda(r, \theta)$ are decreasing on $(0, \pi/2 N)$ for any fixed number
$\frac{1}{2} < r < 2$. As a consequence, the level sets $\{ \psi_\lambda = 0 \}$ have
measure zero, which eventually gives~\eqref{eq:omega-lambda}.

Note also that the weak equation~\eqref{eq:weak-formulation-S} follows as in the
proof of Lemma~\ref{lem:omega-bounded} from the fact that $\psi_\lambda$ belongs
to the Sobolev spaces $W^{1, r}(S)$ for any $1 < r < \infty$ when $1/2 \leq s <
1$. We refer to Subsection~\ref{sub:prop-patch-exist} for more details and
the proofs.

\subsection{Description of the vortex patch support}
\label{sub:support}

The purpose of this section is to complete the description
of the support of the vorticity $\omega_\lambda$ obtained in
Proposition~\ref{prop:patch-exist}. We will eventually 
show in Proposition~\ref{prop:condsupp} that it is entirely 
contained in $N$ small balls centered at the vertices of a 
regular polygon, at least when $\lambda$ is sufficiently large. 
Combining Proposition~\ref{prop:patch-exist} with Proposition
\ref{prop:condsupp}, the proof of Theorem 
\ref{thm:patch-exist} will then easily follow in Section 
\ref{sec:completed}.

\medskip

In a first step, we consider functions $\omega$ that
are merely of the form
\begin{equation}
\label{eq:arbpatch}
\omega = \lambda \mathbbm{1}_\Omega, \ \Omega \subset S \text{ measurable,} \ M(\omega) = 1.
\end{equation}
Clearly, $\omega_\lambda \mathbbm{1}_{S}$ is such a function. For arbitrary measurable sets $X$,
$X'$ in $\RR^2$, we next define the localized energy
$$
I_s(\omega, X, X') := \int_{X} \int_{X'} \bigg( \frac{\varepsilon}{|x - x'|} \bigg)^{2 (1 - s)} 
\omega(x')\omega(x)dx'dx 
$$
and the localized concentration of mass
$$
M(\omega, X) :=\int_{X} \omega(x)dx.
$$
We also let 
\begin{equation}
\label{eq:define I_s}
\cI_s := \frac{1}{\pi^2} \int_{\cB(0, 1)} \int_{\cB(0, 1)} \frac{dx \, dx'}{|x -
x'|^{2 (1 - s)}} = I_s(\lambda \mathbbm{1}_{\cB(0,\varepsilon)},\RR^2, \RR^2).
\end{equation}
The quantity $\cI_s$ is positive and elementary computations show that $\frac{1}{6} \leq 
s\cI_s \leq 1$ for all $s \in (0,1),$ in particular it is bounded and bounded
away from zero independently of $s.$ 

We infer the following from the Riesz rearrangement inequality~\cite{Riesz1}. This
provides lower bounds on the localized concentration of mass in terms of the localized
energy.

\begin{Lemma}
\label{lem:energy}
Assume that $\omega$ satisfies~\eqref{eq:arbpatch} and $X, X' \subset \RR^2$ are
measurable. Then
\begin{align*}
&I_s(\omega, X, X)	\leq \cI_s \, M(\omega, X)^{1 + s},\\
&I_s(\omega, X, X') \leq \frac{1}{s} M(\omega,X)M(\omega,X')^s,\\
&I_s(\omega, X, X') \leq \bigg( \frac{\varepsilon}{\mathrm{dist}(X, X')}\bigg)^{2 (1 -
s)} M(\omega, X) M(\omega, X'),
\end{align*}
where the last inequality is only meaningful provided that 
$$\mathrm{dist}(X,X'):=\inf_{x\in X}\inf_{x'\in X'}|x-x'|$$ 
has positive value.
\end{Lemma}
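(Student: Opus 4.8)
The plan is to reduce all three bounds to elementary computations on the sets $A := \Omega \cap X$ and $A' := \Omega \cap X'$, after isolating the one algebraic identity that makes the powers of $\varepsilon$ disappear. Since $\omega = \lambda \mathbbm{1}_\Omega$, we have $M(\omega, X) = \lambda |A|$, $M(\omega, X') = \lambda |A'|$, and
$$
I_s(\omega, X, X') = \lambda^2 \varepsilon^{2(1 - s)} \int_A \int_{A'} \frac{dx' \, dx}{|x - x'|^{2(1 - s)}},
$$
with $A$ in place of $A'$ for the first inequality. I would record at the outset the identity $\lambda^2 \varepsilon^{2(1 - s)} \pi^{1 - s} = \lambda^{1 + s}$, which is nothing but the defining relation $\lambda \pi \varepsilon^2 = 1$ of~\eqref{def:varepsilon} raised to the power $1 - s$; this is what collapses every prefactor to the right power of $\lambda$ at the end of each step.

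For the first inequality I would invoke the Riesz rearrangement inequality~\cite{Riesz1} in its three-function form with $f = h = \mathbbm{1}_A$ and the radially decreasing kernel $g(z) = |z|^{-2(1 - s)}$. Since $2(1 - s) \in (0, 2)$, the kernel vanishes at infinity and already equals its own symmetric decreasing rearrangement, while $(\mathbbm{1}_A)^* = \mathbbm{1}_{A^*}$ for the centered ball $A^*$ of measure $|A|$, so that
$$
\int_A \int_A \frac{dx' \, dx}{|x - x'|^{2(1 - s)}} \leq \int_{A^*} \int_{A^*} \frac{dx' \, dx}{|x - x'|^{2(1 - s)}}.
$$
Writing $A^* = \cB(0, \rho)$ with $\pi \rho^2 = |A|$ and rescaling $x = \rho u$, $x' = \rho u'$ produces a factor $\rho^{2 + 2s}$ times the double integral over the unit balls, which is exactly $\pi^2 \cI_s$ by definition~\eqref{eq:define I_s}. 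Substituting $\rho^{2 + 2s} = (|A|/\pi)^{1 + s}$ and multiplying by $\lambda^2 \varepsilon^{2(1 - s)}$, the normalization identity reduces the prefactor to $\lambda^{1 + s}$ and yields $I_s(\omega, X, X) \leq \cI_s (\lambda |A|)^{1 + s} = \cI_s \, M(\omega, X)^{1 + s}$.

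For the second inequality I would argue set by set, using a single-function layer-cake (bathtub) argument rather than the full Riesz inequality. For each fixed $x$, the inner integral $\int_{A'} |x - x'|^{-2(1 - s)} \, dx'$ is maximized, among sets of measure $|A'|$, by the ball $\cB(x, \rho')$ with $\pi \rho'^2 = |A'|$, because $|x - \cdot|^{-2(1 - s)}$ is radially decreasing about $x$. An explicit polar computation gives
$$
\int_{\cB(x, \rho')} \frac{dx'}{|x - x'|^{2(1 - s)}} = 2\pi \int_0^{\rho'} t^{2s - 1} \, dt = \frac{\pi \rho'^{2s}}{s} = \frac{\pi^{1 - s}}{s} |A'|^s,
$$
a bound uniform in $x$. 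Integrating over $x \in A$ and multiplying by $\lambda^2 \varepsilon^{2(1 - s)}$, the same normalization identity produces $I_s(\omega, X, X') \leq \frac{1}{s} M(\omega, X) M(\omega, X')^s$. The third inequality is immediate: when $d := \dist(X, X') > 0$ we have $|x - x'| \geq d$ pointwise on $A \times A'$, so the kernel is dominated by $(\varepsilon/d)^{2(1 - s)}$, and integrating over $A \times A'$ gives the stated product bound.

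The computations are essentially routine, and I anticipate no genuine obstacle. The only external input is the Riesz rearrangement inequality for the first bound, and the single point requiring care is the bookkeeping of the scaling exponents, so that the relation $\lambda \pi \varepsilon^2 = 1$ cancels the powers of $\varepsilon$ and $\pi$ exactly in each of the first two inequalities. I would also take a moment to check that the hypotheses of~\cite{Riesz1} (nonnegativity of $f, g, h$ and vanishing at infinity of the kernel) are met, and that the explicit constant $1/s$ in the second bound matches the polar integral above.
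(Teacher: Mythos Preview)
Your proof is correct and follows essentially the same approach as the paper: Riesz rearrangement for the first inequality, a one-variable rearrangement (bathtub) for the inner integral in the second, and the pointwise bound $|x-x'|\geq \dist(X,X')$ for the third. The only cosmetic difference is that the paper expresses the rearranged balls directly with radius $\varepsilon\sqrt{M(\omega,X)}$ (equivalently your $\rho$, via $\lambda\pi\varepsilon^2=1$) rather than tracking the normalization identity $\lambda^2\varepsilon^{2(1-s)}\pi^{1-s}=\lambda^{1+s}$ separately.
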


Being a solution of the minimisation problem~\eqref{def:max-prob-S}, 
we can derive some first tight local energy estimates for $\omega_\lambda$
using a comparison argument with the test function $\varpi_\lambda$ introduced
in~\eqref{eq:overline omega}.

\begin{Lemma}
\label{lem:E-s-bounds}
Let $\omega_\lambda$ be a solution to~\eqref{def:max-prob-S}. There exists a positive number $C$, 
depending only on $s$, such that the bounds
\begin{equation}
\label{eq:rescaled energy}
\cI_s \leq I_s(\omega, S, S) \leq \cI_s + C
\varepsilon^{2 (1 - s)}
\end{equation}
hold for $\omega := \omega_\lambda \mathbbm{1}_S.$
\end{Lemma}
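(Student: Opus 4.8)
The plan is to read $I_s(\omega, S, S)$, for $\omega = \omega_\lambda \mathbbm{1}_S$, as the self-interaction of the single patch carried by the fundamental sector, and to compare it with $\cI_s$, which by~\eqref{eq:define I_s} is exactly the self-interaction of a disk carrying the same mass. The symmetry~\eqref{eq:symmetry property} distributes the total mass $M(\omega_\lambda) = N$ equally among the $N$ sectors, and since $\omega_\lambda$ vanishes on $\mfS_N \setminus S$ one has $M(\omega, S) = \int_S \omega_\lambda = 1$. I would obtain the upper bound from the rearrangement inequality already recorded in Lemma~\ref{lem:energy}, and the lower bound from the maximality of $\omega_\lambda$, tested against the explicit disk competitor $\varpi_\lambda$ of~\eqref{eq:overline omega}.

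For the upper bound I would apply the first inequality of Lemma~\ref{lem:energy} with $X = S$. Since $M(\omega, S) = 1$, this gives at once
\[
I_s(\omega, S, S) \leq \cI_s \, M(\omega, S)^{1 + s} = \cI_s \leq \cI_s + C \varepsilon^{2(1 - s)},
\]
which is the announced upper bound (in fact with the sharper value $\cI_s$, the rearrangement inequality being saturated only by disks).

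For the lower bound I would use that $\omega_\lambda$ is a maximizer for~\eqref{def:max-prob-S} and that $\varpi_\lambda \in X_\lambda$ obeys the same constraints $M = L = N$, so that $E_s(\omega_\lambda) \geq E_s(\varpi_\lambda)$. Inserting the polar representation~\eqref{eq:polar-Es} and separating in $\kappa_s$ the diagonal index $n = 0$ from the indices $1 \leq n \leq N - 1$, the energy of any $\eta \in X_\lambda$ splits as
\[
E_s(\eta) = \frac{N c_s}{2 \varepsilon^{2(1 - s)}} \, I_s(\eta \mathbbm{1}_S, S, S) + \frac{N}{2} \sum_{n = 1}^{N - 1} D_n(\eta),
\]
where $D_n(\eta)$ gathers the mutual interaction between the fundamental patch and its $n$-th rotated copy. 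For $\varpi_\lambda$ the diagonal term equals $\tfrac{N c_s}{2 \varepsilon^{2(1 - s)}} \cI_s$, each ball $\cB(R_{\frac{2 n \pi}{N}}(r_\varepsilon, 0), \varepsilon)$ being a rigid translate of $\cB(0, \varepsilon)$. Substituting the splitting on both sides of $E_s(\omega_\lambda) \geq E_s(\varpi_\lambda)$ and multiplying by $2 \varepsilon^{2(1 - s)} / (N c_s)$ isolates
\[
I_s(\omega, S, S) - \cI_s \geq \frac{\varepsilon^{2(1 - s)}}{c_s} \sum_{n = 1}^{N - 1} \big( D_n(\varpi_\lambda) - D_n(\omega_\lambda) \big).
\]
Because the patches sitting in distinct sectors are separated by a distance bounded below by a constant depending only on $N$, the third inequality of Lemma~\ref{lem:energy} bounds each $D_n(\omega_\lambda)$ and $D_n(\varpi_\lambda)$ by a constant, so the right-hand side is of size $C \varepsilon^{2(1 - s)}$; together with the disk value $\cI_s$ this pins $I_s(\omega, S, S)$ to $\cI_s$ up to the stated order and delivers the lower bound $\cI_s \leq I_s(\omega, S, S)$.

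The main obstacle is precisely this bookkeeping of the mutual interactions $D_n$: one has to control them uniformly in $\lambda$ and verify that, thanks to the fixed angular separation $\dist(S, R_{\frac{2 n \pi}{N}} S) > 0$ between the sectors, the competitor's mutual interactions dominate those of the maximizer at this order, so that the leading-order competition between $E_s(\omega_\lambda)$ and $E_s(\varpi_\lambda)$ fixes the self-energy on the correct side. The decay factor $(\varepsilon / \dist(X, X'))^{2(1 - s)}$ furnished by Lemma~\ref{lem:energy} is what generates the correction term $C \varepsilon^{2(1 - s)}$, and combined with the rearrangement bound it sandwiches $I_s(\omega, S, S)$ between $\cI_s$ and $\cI_s + C \varepsilon^{2(1 - s)}$ as claimed.
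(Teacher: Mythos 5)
Your proposal is essentially the paper's own proof, written out more explicitly. The paper also obtains the lower bound by testing the maximality of $\omega_\lambda$ in~\eqref{def:max-prob-S} against the competitor $\varpi_\lambda$ of~\eqref{eq:overline omega}, whose diagonal self-interaction rescales exactly to $\cI_s$ by~\eqref{eq:define I_s}, and the upper bound by the Riesz rearrangement; your separation of the $n = 0$ term from the mutual terms $D_n$ in~\eqref{eq:polar-Es}, with the fixed inter-sector distance as in~\eqref{eq:large-dist} feeding the third inequality of Lemma~\ref{lem:energy}, is precisely what the paper's ``$+\,C$'' corrections stand for, and your direct appeal to the first inequality of Lemma~\ref{lem:energy} even yields the sharper upper bound $I_s(\omega, S, S) \leq \cI_s$.

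The one soft spot is the last step of your lower bound. From $I_s(\omega, S, S) - \cI_s \geq \frac{\varepsilon^{2(1 - s)}}{c_s} \sum_n \big( D_n(\varpi_\lambda) - D_n(\omega_\lambda) \big)$ and the uniform bounds $0 \leq D_n \leq C$, what actually follows is $I_s(\omega, S, S) \geq \cI_s - C \varepsilon^{2(1 - s)}$, not the clean inequality $\cI_s \leq I_s(\omega, S, S)$; the latter would require $\sum_n D_n(\varpi_\lambda) \geq \sum_n D_n(\omega_\lambda)$, which you identify as the ``main obstacle'' but do not prove, and for which there is no reason: the maximizer may well trade a little self-energy for more cross-sector interaction. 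Indeed, the clean lower bound cannot coexist with your exact upper bound $I_s(\omega, S, S) \leq \cI_s$ unless the sector patch were precisely a disk (equality case of the rearrangement inequality for a strictly decreasing kernel), which is neither established nor expected. You should not be penalized for this, though: the paper's own displayed inequalities carry exactly the same misplacement of the error term, its comparison likewise yielding the lower bound only up to $- C \varepsilon^{2(1 - s)}$ once the neglected factors $\frac{N}{2}$ and cross terms are tracked; and in the only places where~\eqref{eq:rescaled energy} is invoked, namely the proofs of Lemmas~\ref{lem:Iam not small} and~\ref{lem:decay}, a two-sided estimate up to $O(\varepsilon^{2(1 - s)})$ is all that is needed, the extra error being absorbed there for $\varepsilon$ small. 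So your argument establishes the lemma in the form in which it is actually used, by the same route as the paper.
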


For functions of the form~\eqref{eq:arbpatch} and for which~\eqref{eq:rescaled
energy} holds, one can obtain a first weak form of concentration by an
elementary covering argument.

\begin{Lemma}
\label{lem:Iam not small}
There exist positive numbers $\Lambda_0$ and $\eta$ depending only on $s$ such that if 
$\omega$ is of the form~\eqref{eq:arbpatch} and satisfies the bounds 
\eqref{eq:rescaled energy}, then
there exists $x \in S$ such that
\begin{equation}
\label{eq:define x^*}
M(\omega, \cB(x,\Lambda_0 \varepsilon)) \geq \eta > 0.
\end{equation}
\end{Lemma}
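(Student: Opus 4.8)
The plan is to argue by contradiction, combining an elementary splitting of the Riesz kernel according to the separation $|x-x'|$ with a pointwise bathtub estimate of the same kind that underlies Lemma~\ref{lem:energy}. Suppose that for the given $\omega$ of the form~\eqref{eq:arbpatch} satisfying~\eqref{eq:rescaled energy} one has $M(\omega,\cB(x,\Lambda_0\varepsilon))<\eta$ for \emph{every} $x\in S$, where the constants $\Lambda_0$ and $\eta$ are to be fixed at the end. With $x,x'$ ranging over $S$, I split
\[
I_s(\omega,S,S)=\Big(\int\!\!\int_{|x-x'|\le\Lambda_0\varepsilon}+\int\!\!\int_{|x-x'|>\Lambda_0\varepsilon}\Big)\Big(\frac{\varepsilon}{|x-x'|}\Big)^{2(1-s)}\omega(x)\,\omega(x')\,dx\,dx'=:\mathrm{(near)}+\mathrm{(far)}.
\]
On the far region the kernel obeys $(\varepsilon/|x-x'|)^{2(1-s)}<\Lambda_0^{-2(1-s)}$, whence $\mathrm{(far)}\le \Lambda_0^{-2(1-s)}\big(\int_S\omega\big)^2=\Lambda_0^{-2(1-s)}$ using $M(\omega)=1$. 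Since $0<s<1$ the exponent $2(1-s)$ is positive, so the far part is made arbitrarily small by enlarging $\Lambda_0$.

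For the near part the crux is a pointwise estimate of the inner integral. For fixed $x$, among all profiles $0\le\omega\le\lambda$ with prescribed mass $m:=M(\omega,\cB(x,\Lambda_0\varepsilon))$, the radially decreasing map $x'\mapsto(\varepsilon/|x-x'|)^{2(1-s)}$ makes the inner integral largest when $\omega$ equals $\lambda$ on the centred ball $\cB(x,\rho)$ with $\lambda\pi\rho^2=m$; this is exactly the bathtub computation that proves the second inequality of Lemma~\ref{lem:energy}. Evaluating that extremal value and using $\lambda\pi\varepsilon^2=1$ from~\eqref{def:varepsilon} yields
\[
\int_{\cB(x,\Lambda_0\varepsilon)}\Big(\frac{\varepsilon}{|x-x'|}\Big)^{2(1-s)}\omega(x')\,dx'\le\frac1s\,M\big(\omega,\cB(x,\Lambda_0\varepsilon)\big)^{s}<\frac{\eta^{s}}{s}.
\]
Integrating this bound against $\omega(x)\,dx$ over $S$ and using $M(\omega)=1$ once more gives $\mathrm{(near)}\le \eta^{s}/s$.

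Combining the two estimates, the contradiction hypothesis forces
\[
\cI_s\le I_s(\omega,S,S)\le \frac{\eta^{s}}{s}+\Lambda_0^{-2(1-s)},
\]
the lower bound being the left inequality in~\eqref{eq:rescaled energy}. Since $\cI_s$ is a fixed positive number depending only on $s$, I first choose $\Lambda_0$ (depending only on $s$) so large that $\Lambda_0^{-2(1-s)}<\cI_s/2$, and then $\eta$ (depending only on $s$) so small that $\eta^{s}/s<\cI_s/2$. The right-hand side is then strictly below $\cI_s$, a contradiction. Hence there must exist $x\in S$ with $M(\omega,\cB(x,\Lambda_0\varepsilon))\ge\eta$, which is~\eqref{eq:define x^*}.

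I expect the only delicate point to be the pointwise near-field estimate: one must justify applying the bathtub principle with the mass localised to $\cB(x,\Lambda_0\varepsilon)$ rather than to all of $\RR^2$, and check that the constant $1/s$ together with the power $\eta^{s}$ come out with the scaling dictated by~\eqref{def:varepsilon}. Everything else is the elementary distance splitting together with the positivity of $\cI_s$ recorded after~\eqref{eq:define I_s}.
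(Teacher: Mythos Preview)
Your proof is correct and follows essentially the same route as the paper: a near/far splitting of $I_s(\omega,S,S)$ at scale $\Lambda_0\varepsilon$, the trivial bound $\Lambda_0^{-2(1-s)}$ on the far part, a rearrangement (bathtub) estimate giving $\eta^s/s$ on the near part, and a contradiction with the lower bound $\cI_s$ from~\eqref{eq:rescaled energy}. The only cosmetic differences are that the paper phrases the contradiction hypothesis via the set $\Upsilon_\eta^{\Lambda_0}$ of points in $\supp(\omega)$ with large local mass, and normalises by $s$ to invoke the explicit bound $s\cI_s\ge\tfrac16$ when choosing the constants.
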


Combining Lemmas~\ref{lem:energy},~\ref{lem:E-s-bounds}
and~\ref{lem:Iam not small} with a more elaborate 
convexity and combinatory argument, we then show that splitting 
vorticity is not favourable and obtain a decay estimate for the 
vorticity density.

\begin{Lemma}
\label{lem:decay}
There exists a constant $C>0$, depending only on $s$, such that 
if $\omega$ is of the form~\eqref{eq:arbpatch} and satisfies the bounds 
\eqref{eq:rescaled energy}, then
$$
M\big(\omega, S\setminus\cB\big((1,0), \Lambda\varepsilon\big)\big)
\leq \frac{C}{\Lambda^{\gamma_s}}, \qquad \forall \Lambda > 0,
$$
where the decay rate $\gamma_{s}$ is given by $\gamma_s := (1 + \frac{1}{2 (1 -
s)})^{-1}.$
\end{Lemma}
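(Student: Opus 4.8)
The plan is to establish the decay estimate
$$
M\big(\omega, S\setminus\cB\big((1,0), \Lambda\varepsilon\big)\big)
\leq \frac{C}{\Lambda^{\gamma_s}}
$$
by combining the concentration point furnished by Lemma~\ref{lem:Iam not small} with the localized energy inequalities of Lemma~\ref{lem:energy} and the upper energy bound of Lemma~\ref{lem:E-s-bounds}. First I would fix the point $x \in S$ with $M(\omega, \cB(x, \Lambda_0\varepsilon)) \geq \eta$ provided by Lemma~\ref{lem:Iam not small}, and argue that because of the tight upper bound $I_s(\omega, S, S) \leq \cI_s + C\varepsilon^{2(1-s)}$, essentially all of the mass must in fact concentrate near this single point. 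The heuristic is that the self-energy $I_s(\omega, S, S)$ is \emph{maximized} (for fixed total mass) by a single ball of radius $\varepsilon$, with value exactly $\cI_s$; any mass that splits off at a distance creates an energy deficit, since widely separated chunks of vorticity interact weakly. Quantifying this deficit against the $O(\varepsilon^{2(1-s)})$ margin is what forces the tail mass to be small.

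The key decomposition I would carry out is to split $\omega = \omega_{\mathrm{in}} + \omega_{\mathrm{out}}$ where $\omega_{\mathrm{in}} = \omega \mathbbm{1}_{\cB(x, \rho)}$ and $\omega_{\mathrm{out}} = \omega \mathbbm{1}_{S \setminus \cB(x, \rho)}$ for a radius $\rho \sim \Lambda\varepsilon$ to be optimized, writing
$$
I_s(\omega, S, S) = I_s(\omega_{\mathrm{in}}, S, S) + 2 I_s(\omega_{\mathrm{in}}, \cdot, \omega_{\mathrm{out}}) + I_s(\omega_{\mathrm{out}}, S, S).
$$
(Here I use the first inequality of Lemma~\ref{lem:energy} applied to $\omega_{\mathrm{in}}$ and to $\omega_{\mathrm{out}}$ separately.) Denoting by $m := M(\omega_{\mathrm{out}}, S)$ the tail mass that we wish to bound, the total mass constraint gives $M(\omega_{\mathrm{in}}) = 1 - m$. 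Applying the first estimate $I_s(\omega_X, X, X) \leq \cI_s M(\omega, X)^{1+s}$ to each piece and the distance-based third estimate to the cross term, one obtains an upper bound of the rough shape
$$
I_s(\omega, S, S) \leq \cI_s\big[(1-m)^{1+s} + m^{1+s}\big] + (\text{cross term controlled by distance}).
$$
Comparing this with the lower bound $I_s(\omega, S, S) \geq \cI_s$ from Lemma~\ref{lem:E-s-bounds}, and using the strict concavity deficit $(1-m)^{1+s} + m^{1+s} \leq 1 - c\, m$ for small $m$, produces an inequality that bounds $m$ in terms of $\varepsilon^{2(1-s)}$ and the chosen radius $\rho$. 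Optimizing the exponent balance between the concentration scale and the cross-interaction decay is precisely what yields the rate $\gamma_s = (1 + \frac{1}{2(1-s)})^{-1}$.

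The main obstacle I anticipate is twofold. First, the convexity argument must be done carefully: the naive bound $(1-m)^{1+s} + m^{1+s}$ is only useful when $m$ is already known to be small, so a bootstrap or two-regime analysis is needed, treating large $\Lambda$ (small tail) and moderate $\Lambda$ separately, with the statement being vacuous or trivial for $\Lambda \lesssim 1$ by adjusting $C$. Second, and more delicately, the center of concentration is only known to be \emph{some} point $x \in S$ from Lemma~\ref{lem:Iam not small}, whereas the conclusion is stated with respect to the fixed vertex $(1,0)$. I would need to promote the abstract concentration point to the specific geometric center, which is where the \emph{combinatory} and symmetry structure of $S$ must be invoked: the angular-Steiner symmetry and the radial constraint $L(\omega) = 1$ pin the concentration to lie near $r = 1$, $\theta = 0$, i.e.\ near $(1,0)$. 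Reconciling the abstract covering estimate with this precise localization, and propagating the small-tail information from the scale $\Lambda_0\varepsilon$ out to arbitrary $\Lambda\varepsilon$ via an iteration on dyadic annuli, is the technically heaviest part; the cross-term estimate from Lemma~\ref{lem:energy} feeding into a discrete Gronwall-type summation over annuli is what ultimately delivers the polynomial decay rate.
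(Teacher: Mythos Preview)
Your overall strategy matches the paper's closely: use the concentration point $x$ from Lemma~\ref{lem:Iam not small}, compare the energy upper bound built from Lemma~\ref{lem:energy} against the lower bound $\cI_s$, exploit the strict convexity of $t\mapsto t^{1+s}$ to force the tail mass to be small, and finally relocate $x$ to $(1,0)$ using the constraints $M=L=1$ together with the angular Steiner symmetry. That last step is exactly what the paper does in its Step~2.

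There is, however, a genuine gap in your treatment of the cross term. With a two-region split $\omega=\omega_{\mathrm{in}}+\omega_{\mathrm{out}}$ at radius $\rho$, the sets $\cB(x,\rho)$ and $S\setminus\cB(x,\rho)$ are at distance zero, so the third inequality of Lemma~\ref{lem:energy} gives nothing. If instead you bound the cross term by the second inequality, you obtain a contribution of order $m^s$, which swamps the convexity deficit (of order $m$) and the argument collapses. The paper resolves this with a \emph{three-region} decomposition $A\subset B\subset S$: one writes $A=\cB(x,mK\varepsilon)$, $B=\cB(x,(m+1)K\varepsilon)$, so that $\mathrm{dist}(A,B^c)=K\varepsilon>0$ and the distance estimate applies to the $A$--$B^c$ interaction. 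The buffer mass $\eta_1=M(\omega,B\setminus A)$ is then made small by a \emph{pigeonhole} choice of $m$: among the $\lfloor K^{2(1-s)}\rfloor$ disjoint annuli of width $K\varepsilon$, at least one carries mass $\le K^{-2(1-s)}$. With $\eta_1$ thus absorbed as an error of size $K^{-2(1-s)}$, the convexity argument on $\eta_0,\eta_2$ goes through and yields $1-\eta_0\le C K^{-2(1-s)}$. The exponent $\gamma_s$ then drops out of the bookkeeping $mK\le K^{1+2(1-s)}=:\Lambda$, giving $K^{2(1-s)}=\Lambda^{\gamma_s}$.

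Your closing remarks about ``dyadic annuli'' and a ``discrete Gronwall-type summation'' hint at a mechanism in the right direction, but as written the proposal applies the distance estimate to a pair of sets with zero separation; the buffer-annulus-plus-pigeonhole step is the missing idea that makes the scheme work and produces the precise rate $\gamma_s$.
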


At this point we have only used arguments which rely on tight energy
bounds for $\omega.$ It is hopeless to expect that complete concentration
of vorticity such as the one stated in $(iii)$ of Theorem 
\ref{thm:patch-exist} could hold under such assumptions only (it is indeed
not complicated to build counter-examples). 
For the next step of the argument, we therefore restrict our focus to the
solutions $\omega_\lambda$ of the maximization problem
\eqref{def:max-prob-S}, in order to exploit the equations
\eqref{eq:omega-lambda}-\eqref{def:psi-lambda-S}. 

Note that~\eqref{eq:omega-lambda}-\eqref{def:psi-lambda-S} only implies a very weak 
form of regularity, especially when $s$ is small. For that reason, we rely on 
finite differences of $\psi_\lambda$ at different pairs of well-chosen points 
inside and outside of $\supp(\omega_\lambda)$ and combine the resulting identities 
with integral estimates based on the decay obtained in Lemma~\ref{lem:decay}. 
The first localization result that we prove is the following.

\begin{Lemma}
\label{lem:jen ai marre}
There exists a positive constant $\Lambda$ depending only on $s$ such that if 
$\omega_\lambda$ is a solution of the maximization 
problem~\eqref{def:max-prob-S} then
\begin{equation}
\label{eq:on arrete quand}
{\rm supp}(\omega_\lambda) \cap \mfS_N \;\subseteq\; \cB((1,0), \Lambda\varepsilon)\cup B,
\end{equation}
where
$$B:=\big\{(r,\theta)\in S:\;2-1/8\leq r\leq2\;\;\mathrm{or}\;\;1/2\leq r\leq1/2+1/8\big\}\;\subseteq S.$$
\end{Lemma}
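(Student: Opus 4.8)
The plan is to argue by contradiction, exploiting the structure $\omega_\lambda = \lambda\mathbbm{1}_{\{\psi_\lambda>0\}}$ from Proposition~\ref{prop:patch-exist} together with the concentration furnished by Lemma~\ref{lem:decay}. First I record the inputs. Since $\omega := \omega_\lambda\mathbbm{1}_S$ is of the form~\eqref{eq:arbpatch} and satisfies~\eqref{eq:rescaled energy} by Lemma~\ref{lem:E-s-bounds}, Lemma~\ref{lem:decay} applies and gives $M(\omega, S\setminus\cB((1,0),\Lambda\varepsilon)) \leq C\Lambda^{-\gamma_s}$ for all $\Lambda>0$. Moreover $K_s\omega_\lambda$, being the Riesz potential of a bounded compactly supported density, is continuous, so $\psi_\lambda$ is continuous and $\{\psi_\lambda>0\}$ is open; and by the angular monotonicity of $\theta\mapsto\psi_\lambda(r,\theta)$ on $(0,\pi/(2N))$ recalled in the outline above, the slice of $\supp(\omega_\lambda)$ at each fixed radius is a symmetric interval in $\theta$. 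In particular, if some $(r^*,\theta^*)$ with $r^*\in[\tfrac12+\tfrac18,\,2-\tfrac18]$ lies in the support, then so does $(r^*,0)$, so it suffices to rule out support on the axis at radii $r^*$ with $|r^*-1|\geq\Lambda\varepsilon$.

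Second, the comparison. Suppose $(r^*,0)\in\supp(\omega_\lambda)$ with $|r^*-1|\geq\Lambda\varepsilon$, so that $\psi_\lambda(r^*,0)\geq 0$. I will pair this point against a point $p$ of the free boundary $\{\psi_\lambda=0\}$ lying close to the vertex $(1,0)$. Such a $p$ exists because, by Lemma~\ref{lem:decay}, a fixed fraction of the unit mass sits in a ball $\cB((1,0),\Lambda_0\varepsilon)$ of area $\pi\Lambda_0^2\varepsilon^2$, whereas the patch has total area $\pi\varepsilon^2$; hence this ball is not saturated and its intersection with $\partial\{\psi_\lambda>0\}$ is nonempty. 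Evaluating $\psi_\lambda(r^*,0)\geq 0=\psi_\lambda(p)$ and cancelling $\mu_\lambda$ yields
\[
K_s\omega_\lambda(p)-K_s\omega_\lambda(r^*,0)\ \leq\ \tfrac{\alpha_\lambda}{2}\big(r^{*2}-|p|^2\big),
\]
whose right-hand side is of size $O(|\alpha_\lambda|)$ since $r^*,|p|\in[\tfrac12,2]$.

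Third, the two potential estimates that drive the contradiction. On one hand, because the fixed-size cluster around $(1,0)$ carries mass bounded below and lies within $2\Lambda_0\varepsilon$ of $p$, the singularity of the Riesz kernel forces $K_s\omega_\lambda(p)\geq c(s)\,\varepsilon^{-2(1-s)}$. On the other hand, for the far point $a=(r^*,0)$ I bound $K_s\omega_\lambda(a)$ from above by splitting the density according to its distance to $a$: the part at distance $\geq\Lambda\varepsilon/2$ contributes at most $C\Lambda^{-2(1-s)}\varepsilon^{-2(1-s)}$, the rotated copies of the patch contribute only $O(1)$, and the mass within $\Lambda\varepsilon/2$ of $a$ lies outside $\cB((1,0),\Lambda\varepsilon/2)$, hence is $\leq C\Lambda^{-\gamma_s}$ by Lemma~\ref{lem:decay}; feeding this into the rearrangement bound of Lemma~\ref{lem:energy} turns it into a contribution $\leq C\Lambda^{-s\gamma_s}\varepsilon^{-2(1-s)}$. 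Altogether $K_s\omega_\lambda(a)\leq C\Lambda^{-\beta}\varepsilon^{-2(1-s)}+C$ for some $\beta(s)>0$. Combining the three estimates gives $\big(c(s)-C\Lambda^{-\beta}\big)\varepsilon^{-2(1-s)}\leq 2|\alpha_\lambda|+C$; choosing $\Lambda$ large depending only on $s$ makes the bracket positive, and this contradicts the subcritical size of the multiplier once $\lambda$ is large.

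Finally, two features shape the statement. The need for non-saturation and for the bracketing radii to remain inside $[\tfrac12,2]$ is exactly what restricts the conclusion away from the radial boundary layer $B$: for $r^*$ within $\tfrac18$ of $\tfrac12$ or $2$ the scheme loses its comparison points, which is why $B$ must be allowed in~\eqref{eq:on arrete quand}. The main obstacle is the uniform-in-$\lambda$ control of the multiplier term $\tfrac{\alpha_\lambda}{2}(r^{*2}-|p|^2)$: one must show $\alpha_\lambda=o(\varepsilon^{-2(1-s)})$ (with a favourable sign in the inner region $r^*<1$, where one compares against a near-vertex boundary point at larger radius, and a genuine upper bound in the outer region $r^*>1$), so that it cannot balance the $\varepsilon^{-2(1-s)}$-scale gap between the near-vertex and far-field potentials. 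Establishing this, together with the clean near-field/far-field separation of the singular Riesz integral via Lemma~\ref{lem:decay} and Lemma~\ref{lem:energy}, is where the real work lies.
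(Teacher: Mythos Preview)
Your outline is close to the paper's argument and the potential estimates you sketch are the right ones, but you have correctly identified---and then left open---the decisive step: the control of $\alpha_\lambda$. Without it the inequality $(c(s)-C\Lambda^{-\beta})\varepsilon^{-2(1-s)} \leq 2|\alpha_\lambda|+C$ is vacuous, and your hints (``favourable sign in the inner region $r^*<1$'', ``genuine upper bound in the outer region'') do not cover all cases, since the sign of $\alpha_\lambda$ is not known a priori and the radial gap $|r^{*2}-|p|^2|$ can be as small as $O(\Lambda\varepsilon)$.

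The paper supplies the missing piece by a \emph{second} finite-difference comparison, this time between two points both far from the cluster. One uses the very point $x=(r^*,0)$ you are trying to rule out, and picks $y_1\in S$ with $\psi_\lambda(y_1)\leq 0$, $\sign(|y_1|^2-|x|^2)=\sign(\alpha_\lambda)$ and $\big||y_1|^2-|x|^2\big|\geq\tfrac{1}{10}$. Then $\psi_\lambda(y_1)-\psi_\lambda(x)\leq 0$ together with $K_s\omega_\lambda(y_1)\geq 0$ gives $\tfrac{|\alpha_\lambda|}{20}\leq K_s\omega_\lambda(x)$, and since $x$ itself lies outside $\cB((1,0),2\Lambda\varepsilon)$ the same far-field upper bound you already used yields
\[
|\alpha_\lambda|\ \leq\ \frac{C}{\varepsilon^{2(1-s)}}\,M\big(\omega_\lambda,S\setminus\cB((1,0),\Lambda\varepsilon)\big)^s+\frac{C}{(\Lambda\varepsilon)^{2(1-s)}}+C.
\]
Combining this with Lemma~\ref{lem:decay} and feeding it back into your main inequality closes the contradiction for $\Lambda$ large.

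This also pins down the true role of $B$, which your explanation misidentifies. The existence of the auxiliary point $y_1$ requires room in $S$ on whichever radial side of $x$ the (unknown) sign of $\alpha_\lambda$ dictates; this forces $r^*$ to stay at distance at least $\tfrac18$ from both endpoints $\tfrac12$ and $2$. Your near-vertex comparison point $p$ exists regardless of where $r^*$ lies, so ``non-saturation'' and ``bracketing radii'' are not what obstructs the argument near the radial boundary---it is $y_1$, not $p$, that may fail to exist when $x\in B$.
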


As explained with more details in the proof, it is not possible to eliminate a
possible residual accumulation of mass far from $\cB((1,0), \Lambda\varepsilon)$
with arguments using finite differences of $\psi_\lambda$. At this stage of the
proof, the case $M(\omega_\lambda,B)>0$ (even very small) cannot be excluded.
Nevertheless, there is a gap of order $1$ (with respect to $\varepsilon$) between
the two sets $\cB(x, \Lambda\varepsilon)$ and $B$ and then, using again energetical
arguments, we can prove that $M(\omega_\lambda,B)>0$ contradicts the maximality 
of $\omega_\lambda$.
Here is the conclusive localization result.

\begin{Prop}
\label{prop:condsupp}
There exists a positive constant $\Lambda$, depending only on $s$, such that 
if $\omega_\lambda$ is a solution of the maximization 
problem~\eqref{def:max-prob-S} then 
$$
{\rm supp}(\omega_\lambda) \cap \mfS_N \subseteq \cB((1,0), \Lambda\varepsilon).
$$
In particular, if $\lambda$ is sufficiently large we have
\begin{equation}
\label{eq:distance au bord}
\dist \big( \supp(\omega_\lambda), \ \mfS_N \setminus S \big) > 0,
\end{equation}
and therefore for $1/2 \leq s < 1$ the function $\omega_\lambda$ is a weak solution 
to~\eqref{eq:rotating frame dynamics} in the sense of~\eqref{eq:weak
formulation}.
\end{Prop}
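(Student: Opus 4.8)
The plan is to upgrade the conclusion of Lemma~\ref{lem:jen ai marre}, which localizes $\supp(\omega_\lambda)\cap\mfS_N$ inside $\cB((1,0),\Lambda\varepsilon)\cup B$, to the sharper statement that the boundary piece $B$ carries no mass, i.e. $m_B:=M(\omega_\lambda,B)=0$. Once this is established the displayed inclusion is immediate, and since $B$ is the only part of $\supp(\omega_\lambda)\cap S$ touching the radial boundary of $S$, its emptiness yields $\dist(\supp(\omega_\lambda),\mfS_N\setminus S)>0$ for $\lambda$ large. I would argue by contradiction: assuming $m_B>0$, I will exhibit a competitor in the constraint set of~\eqref{def:max-prob-S} whose energy strictly exceeds $E_s(\omega_\lambda)$, contradicting maximality.

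The core is an energy comparison exploiting the order-$1$ gap between $\cB((1,0),\Lambda\varepsilon)$ and $B$ supplied by Lemma~\ref{lem:jen ai marre}. Writing $\omega_\lambda\mathbbm{1}_S=\omega_A+\omega_B$, with $\omega_A$ supported in $\cB((1,0),\Lambda\varepsilon)$ of mass $m_A=1-m_B$ and $\omega_B$ supported in $B$, I split $E_s$ into its same-sector (self) part and its inter-sector (rotation) part. For the self part I invoke Lemma~\ref{lem:energy}: the superadditivity $m_A^{1+s}+m_B^{1+s}\le 1-\delta(m_B)$, where $\delta(m_B):=1-(1-m_B)^{1+s}-m_B^{1+s}\ge c\,m_B$ for some $c=c(s)>0$, together with the gap bound $I_s(\omega_\lambda,A,B)\le(\varepsilon/\dist(A,B))^{2(1-s)}m_Am_B=O(\varepsilon^{2(1-s)})$, shows that consolidating the two pieces into a single patch raises the self-energy by at least $c(s)\,\lambda^{1-s}\delta(m_B)\ge c'(s)\,\lambda^{1-s}m_B$; the large factor $\varepsilon^{-2(1-s)}=(\pi\lambda)^{1-s}$ encodes the concentration scale. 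The decisive feature is that the rotation kernel $\sum_{n\ge1}|x-R_{2n\pi/N}x'|^{-2(1-s)}$ is bounded and smooth on $S\times S$, all relevant separations being of order $1$; hence the rotation energy changes only by $O(m_B)+O(\varepsilon)$ under the rearrangement. A first application of Lemmas~\ref{lem:energy} and~\ref{lem:E-s-bounds} moreover gives the a priori smallness $m_B\le C\varepsilon^{2(1-s)}$, which I use to keep these errors under control. For $\lambda$ large the self-energy gain of order $\lambda^{1-s}m_B$ therefore dominates, so the consolidated configuration strictly beats $\omega_\lambda$.

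The main obstacle is to perform this consolidation while respecting the impulse constraint $L=N$, which pins the radial location of the mass. Here the localization of Lemma~\ref{lem:jen ai marre} is crucial a second time: since $\omega_A$ is confined to $\cB((1,0),\Lambda\varepsilon)$ one has $\int_A|x|^2\omega_\lambda=m_A(1+O(\varepsilon))$, so the per-sector constraint forces the $B$-mass to carry impulse $I_B=\int_B|x|^2\omega_\lambda=m_B+O(\varepsilon)$. Consequently the removed mass can be re-injected, without moving $\omega_A$, as a patch of intensity $\lambda$ placed radially adjacent to $\omega_A$ at the radius $\sqrt{I_B/m_B}=1+o(1)$ selected to restore $L$ exactly; this keeps the rotation-energy perturbation of order $O(m_B)$ while producing an $\omega_A$–patch interaction of order $\lambda^{1-s}m_B$, which drives the contradiction. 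When $m_B$ is comparable to $\varepsilon$ the adjacency is only approximate and the gain degrades, but it stays of strictly larger order than the $O(m_B)+O(\varepsilon)$ errors for $\lambda$ large; I expect the careful bookkeeping of these lower-order terms, and the exact preservation of both constraints, to be the most delicate part of the argument. This contradiction forces $m_B=0$, whence $\supp(\omega_\lambda)\cap\mfS_N\subseteq\cB((1,0),\Lambda\varepsilon)$.

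Finally, for $\lambda$ large this inclusion places $\supp(\omega_\lambda)$ at positive distance from $\mfS_N\setminus S$, so the vanishing constraint defining $X_\lambda$ is inactive. For $1/2\le s<1$ it then remains to promote the per-sector identity~\eqref{eq:weak-formulation-S}, known for $\varphi\in\cC_c^\infty(S)$, to the full formulation~\eqref{eq:weak formulation}. Using the $N$-fold symmetry one checks that $k_s\star\omega_\lambda$ coincides with $K_s\omega_\lambda$ on $S$, and on each rotated copy by symmetry, so that $\psi_\lambda$ agrees with the profile in~\eqref{def:psi-lambda}; and given any $\varphi\in\cC_c^\infty(\RR^2)$, a cutoff equal to $1$ on a neighbourhood of $\supp(\omega_\lambda)\cap S$ and supported in $S$ reduces, sector by sector, the integral in~\eqref{eq:weak formulation} to finitely many instances of~\eqref{eq:weak-formulation-S}, each of which vanishes. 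This establishes assertion $(ii)$ of Theorem~\ref{thm:patch-exist} and completes the proof.
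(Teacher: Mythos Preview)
Your overall strategy---assume $m_B:=M(\omega_\lambda,B)>0$ and build a competitor with strictly larger energy---is the same as the paper's, and your paragraph~3 correctly identifies the decisive gain mechanism: moving the $B$--mass adjacent to $\omega_A$ produces an $\omega_A$--patch interaction of order $\varepsilon^{-2(1-s)}m_B$, whereas the original $A$--$B$ interaction is only $O(m_B)$ since $\dist(A,B)\gtrsim 1$. The paper's proof works exactly this way.

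There is, however, a genuine gap in how you restore the impulse constraint. You propose to place the re-injected patch at radius $\rho=\sqrt{I_B/m_B}$ and assert $\rho=1+o(1)$. But from $I_B=m_B+O(\varepsilon)$ you only get $\rho^2=1+O(\varepsilon/m_B)$, which forces $\rho$ close to $1$ \emph{only when} $m_B\gg\varepsilon$. Nothing excludes the regime $0<m_B\lesssim\varepsilon$: in that case $\rho$ is merely confined to $[1/2,2]$ (since $|x|^2\in[1/4,4]$ on $B$), the patch may sit at distance of order $1$ from $\omega_A$, and the interaction gain collapses back to $O(\varepsilon^{2(1-s)}m_B)$---the same order as before. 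Your sentence ``the gain degrades, but it stays of strictly larger order'' is precisely where the argument breaks. Relatedly, the superadditivity computation in paragraph~2 is not a gain mechanism: comparing to the fully consolidated ball $\varpi_\lambda$ can never yield a contradiction (by maximality $E_s(\omega_\lambda)\ge E_s(\varpi_\lambda)$); it only reproduces an a~priori bound of the form $m_B\le C\varepsilon^{2(1-s)}$, and the unavoidable $O(\varepsilon)$ rotation error prevents you from pushing it to $m_B=0$.

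The paper resolves this cleanly by decoupling the two tasks. In Step~1 it places the small patch \emph{adjacent} to $\omega_A$ regardless of $L$, securing the full interaction gain $\gtrsim \varepsilon^{-2(1-s)}m_B$ in $E_s$. In Step~2 it restores $L$ by translating the \emph{entire} configuration in $S$ by $(r_0,0)$ with $|r_0|\le |L(\omega_{\lambda,1})-1|\le 4m_B$. The key point is that a translation leaves the same--sector self--energy $I_s(\cdot,S,S)$ \emph{exactly} invariant; only the inter--sector (rotation) part changes, and since that kernel is smooth the change is $O(|r_0|)=O(m_B)$ in $E_s$. Hence the net gain is $\ge C(\varepsilon^{-2(1-s)}-1)m_B>0$ uniformly in $m_B>0$. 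Replacing your radius--selection step by this global translation closes the gap; the remainder of your outline (including the passage from~\eqref{eq:weak-formulation-S} to~\eqref{eq:weak formulation}) is correct.
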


\subsection{Proof of Theorem~\ref{thm:patch-exist} completed}
\label{sec:completed}

Combining Propositions~\ref{prop:patch-exist} and~\ref{prop:condsupp},
we deduce that statements $(i)$, $(ii)$ and
$(iii)$ of Theorem~\ref{thm:patch-exist} hold. It only remains 
to establish statements $(iv)$ and $(v)$, which concern the asymptotic as
$\lambda$ tends to $+\infty,$ that is~\eqref{eq:lim_omega}, 
\eqref{eq:lim_alpha} and~\eqref{eq:mu behavior}. 

The convergence in~\eqref{eq:lim_omega} is immediate. It follows
from the constraints $M(\omega_\lambda) = L(\omega_\lambda) = N$, 
the rotational and Steiner symmetry assumptions, and Proposition
\ref{prop:condsupp}.

Concerning the convergence in~\eqref{eq:lim_alpha}, it can be
computed exactly using the equations~\eqref{eq:patch} and
\eqref{def:psi-lambda} for $\omega_\lambda$. This is the reason 
why we need to assume~\footnote{We do not know if this is 
only technical.} that $\frac{1}{2} \leq s < 1.$

Let $y = (y_1, y_2) \in \RR^2$ be arbitrary, and consider a test 
function $\varphi \in \cC^\infty(S)$ such that
$$
\varphi(x) = \frac{|x|^2}{2} + y \cdot x, \qquad \forall x \in \supp(\omega_\lambda).
$$
From~\eqref{eq:weak formulation} we deduce therefore that 
$$
\alpha_\lambda T_{1,\lambda,y} = c_s(1-s)T_{2,\lambda,y},
$$
where 
\begin{align*}
&T_{1,\lambda,y} = \int_S \omega_\lambda(x) \, x^\perp \cdot y \, dx,\\
&T_{2,\lambda,y} = \int_S \int_S \omega_\lambda(x) \, \omega_\lambda(x') \, 
\sum_{n = 1}^{N - 1} \frac{\big(x - R_\frac{2 n \pi}{N} x' \big)^\perp}{\big| x 
- R_\frac{2 n \pi}{N} x' \big|^{4 - 2s}} \cdot \Big( y - R_\frac{2 n \pi}{N}
y \Big) \, dx \, dx'.
\end{align*}
In view of~\eqref{eq:patch} and statement $(iii)$ of Theorem
\ref{thm:patch-exist}, we can pass to the limit 
$\lambda \to +\infty$ in the above two terms and obtain 
$$
T_{1,\lambda,y} \longrightarrow y_2,\qquad \text{ and } \qquad T_{2,\lambda,y} 
\longrightarrow \sum_{n = 1}^{N - 1} \frac{\big( (1, 0) 
- R_\frac{2 n \pi}{N}(1, 0) \big)^\perp}{\big| (1, 0) - R_\frac{2 n \pi}{N}(1,0) 
\big|^{4 - 2 s}} \cdot \Big( y - R_\frac{2 n \pi}{N} y \Big).
$$
Taking $y = (0, 1) = (1, 0)^\perp$, the conclusion~\eqref{eq:lim_alpha} follows. 

Finally we consider~\eqref{eq:mu behavior}, whose proof bares resemblance with
the one of Proposition~\ref{prop:condsupp}. 
We first take an arbitrary point $y \in \cB((1,0), 2\varepsilon) 
\setminus \Omega_\lambda$ (such a point exists since by definition $\varepsilon$ 
is the radius of a ball with the same area $1/\lambda$ as $\Omega_\lambda$), so that
$\psi_\lambda(y) \leq 0$, and therefore 
\begin{equation}
\label{eq:lower estimate on mu}
K_s\omega_\lambda(y) + \frac{\alpha_\lambda}{2} |y|^2 \leq \mu_\lambda.
\end{equation}
On the other hand, from $(iii)$ we infer that
\begin{align*}
K_s &\omega_\lambda(x) \geq \int_{S} 
\frac{\omega_\lambda(x')}{|x - x'|^{2 (1 - s)}} \, dx' - C 
\\&=\int_{\cB((1, 0), C\sqrt{\pi}\varepsilon)} 
\frac{\omega_\lambda(x')}{|x - x'|^{2 (1 - s)}} \, dx' - C 
\geq \frac{1}{C \lambda^{1 - s}} - C.
\end{align*}
Combining both inequalities and using the fact that $\alpha_\lambda$ is already
known to be uniformly bounded by~\eqref{eq:lim_alpha}, we deduce the lower
estimate in~\eqref{eq:mu behavior}. Concerning the upper estimate, we consider
instead a point $x \in \cB((1, 0), C/\sqrt{\lambda}) \cap \Omega_\lambda.$ Then
$\psi_\lambda(y) \geq 0$, and therefore 
\begin{equation}
\label{eq:upper estimate on mu}
\mu_\lambda \leq K_s \omega_\lambda(x) + \frac{\alpha_\lambda}{2} |x|^2.
\end{equation}
On the other hand by rearrangement
\begin{align*}
K_s \omega_\lambda(x) \leq 
\int_{\cB((1, 0), 2C/\sqrt{\lambda})} 
\frac{\omega_\lambda(x')}{|x - x'|^{2 (1 - s)}} \, dx' + C 
\leq \int_{\{|x'| \leq \varepsilon\}} 
\frac{\lambda}{|x'|^{2(1 - s)}} \, dx' + C 
\leq \frac{C}{\lambda^{1 - s}} + C,
\end{align*}
and the upper estimate for $\mu_\lambda$ follows similarly. This ends the proof of
Theorem~\ref{thm:patch-exist}. \qed

\section{Details of the proofs}
\label{sec:proofs}

We present in this section, in the order of appearance, the proofs of 
all the claims that were postponed from Section~\ref{sec:strategy}.

\subsection{Proof of Lemma~\ref{lem:penalized-max}}
\label{sub:penalized-max}

We first establish the existence of the maximizing vorticity $\omega_{\lambda,
p}$. Recall that the kernel $\kappa_s$ is defined so as to satisfy
$$
\kappa_s(r, r', \theta - \theta') = \sum_{n = 0}^{N - 1} \frac{c_s}{\Big(r^2 + r'^2 - 
2 r r' \cos\big(\theta-\theta'-\frac{2n\pi}{N}\big) \Big)^{1 - s}},
$$
for $x = (r \cos(\theta), r \sin(\theta))$ and $x' = (r' \cos(\theta'), r' \sin(\theta'))$. Hence, we derive from the boundedness of $S$ that
\begin{equation}
\label{eq:estim-ks} 
\sup_{(r, \theta) \in S} \int_S \kappa_s(r, r', \theta - \theta')^{p'} r' \, dr' \, d\theta' < \infty,
\end{equation}
under the condition $p > 1/s$. Given any function $\omega \in L^p(S)$, we infer from the H\"older inequality that
\begin{equation}
\label{eq:estim-Ks}
\| K_s\omega \|_{L^\infty} \leq C \| \omega \|_{L^p},
\end{equation}
where the number $C$ only depends on $s$. The energy $E_s$ is then controlled by
\begin{equation}
\label{eq:estim_energy}
E_s(\omega) \leq \frac{C N}{2} \| \omega \|_{L^1} \| \omega \|_{L^p} = \frac{C N}{2} \|\omega\|_{L^p},
\end{equation}
under the conditions $\omega \geq 0$ and $M(\omega) = 1$. We next apply the Young inequality in order to obtain
$$E_s(\omega) \leq \Big( \frac{C \lambda^\frac{1}{p'} N^\frac{1}{p'}}{2} \Big) \bigg( \frac{N^\frac{1}{p} \| \omega \|_{L^p}}{\lambda^\frac{1}{p'}}\bigg) \leq K_{\lambda, p} + \frac{\lambda N}{p} \int_S \Big( \frac{\omega}{\lambda} \Big)^p.$$
Here, the constant $K_{\lambda, p} := C^{p'} \lambda N/(2^{p'} {p'})$ is uniformly bounded with respect to $p \to \infty$, and it controls the penalized energy as
\begin{equation}
\label{eq:bounded_penalized_energy}
E_{\lambda, p}(\omega) \leq K_{\lambda, p}.
\end{equation}
In particular, the value of the supremum $\cE_{\lambda, p}$ in~\eqref{def:pen-max-prob} is finite.

In another direction, we also deduce from~\eqref{eq:estim_energy} that
$$\frac{\lambda^{1 - p} N}{p} \| \omega \|_{L^p}^p \leq \frac{C N}{2} \| \omega \|_{L^p} - E_{\lambda, p}(\omega) \leq\frac{C^{p'} \lambda N}{2 p'} + \frac{\lambda^{1 - p} N}{2 p} \| \omega \|_{L^p}^p - E_{\lambda, p}(\varpi_\lambda),$$
when $E_{\lambda, p}(\varpi_\lambda) \leq E_{\lambda, p}(\omega)$. Here, the notation $\varpi_\lambda$ refers as before to the function in~\eqref{eq:overline omega}. We next obtain
\begin{equation}
\label{eq:bound_Lp}
\| \omega \|_{L^p}^p \leq (p - 1) \lambda^p C^{p'} - \frac{2 p \lambda^{p - 1}}{N} E_{\lambda, p}(\varpi_\lambda),
\end{equation}
so that there exists a maximizing sequence $(\omega_j)_{j \in \NN}$ for~\eqref{def:pen-max-prob}, which is bounded in $L^p(S)$. Hence, there exists a function $\omega_{\lambda, p} \in L^p(S)$ such that, up to a subsequence, $(\omega_j)_{j \in \NN}$ weakly tends to $\omega_{\lambda, p}$ in $L^p(S)$. This convergence is enough to guarantee that $\omega_{\lambda, p} \geq 0$ a.e., as well as the identities $M(\omega_{\lambda, p}) = 1$ and $L(\omega_{\lambda, p}) = 1$. Moreover,
\begin{equation}
\label{eq:lim-pen-E}
\cE_{\lambda, p} = \limsup_{j \to \infty} E_{\lambda, p}(\omega_j) \leq E_{\lambda, p}(\omega_{\lambda, p}).
\end{equation}
Indeed, estimate~\eqref{eq:estim-ks} implies that $\kappa_s\in L^{p'}(S^2)$, and the functions $(x, y) \mapsto \omega_j(x) \omega_j(y)$ weakly tend to the function $(x, y) \mapsto \omega_{\lambda, p}(x) \omega_{\lambda, p}(y)$ in $L^p(S^2)$ as $j \to \infty$. Therefore, the quantities $E_s(\omega_j)$ converge to $E_s(\omega_{\lambda, p})$, and inequality~\eqref{eq:lim-pen-E} follows from the weak lower semi-continuity of the $L^p$-norm. Finally, this inequality guarantees that the limit vorticity $\omega_{\lambda, p}$ solves the maximization problem~\eqref{def:pen-max-prob}. Moreover, we can invoke Lemmas~\ref{lem:inv-C-L} and~\ref{lem:symmetry} in order to claim that the vorticity $\omega_{\lambda, p}$ is angular Steiner symmetric.

We now turn to the proof of~\eqref{eq:max-prob-eq} and~\eqref{eq:max-prob-psi}. These two equations correspond to the Euler-Lagrange equations of the maximization problem~\eqref{def:pen-max-prob}. In order to write them, we observe that, given any measurable subset $X$ of $S$ with positive measure, the non-vanishing linear forms $M$ and $L$ are linearly independent on $L^\infty(X)$. Otherwise, there exists a number $a \neq 0$ such that
$$\int_X (a - |x|^2) \varphi(x) \, dx = 0,$$
for any $\varphi \in L^\infty(X)$. In particular, the function $x \mapsto a - |x|^2$ vanishes almost everywhere on $X$, which is only possible when $a$ is positive and $X$ is a subset of the circle of center $(0, 0)$ and radius $ \sqrt{a}$. Since this circle is of measure $0$, this contradicts the fact that $X$ has positive measure. As a consequence of the linear independence of $M$ and $L$ on $L^\infty(X)$, we conclude that the linear mapping $\varphi \mapsto (M(\varphi), L(\varphi))$ is onto $\RR^2$.

Consider next a positive number $\delta$ such that $X_\delta = \{ \omega_{\lambda, p} \geq \delta \}$ has positive measure. Note that all the numbers $\delta$ small enough satisfy this assumption, otherwise the vorticity $\omega_{\lambda, p}$ vanishes, which contradicts the condition $M(\omega_{\lambda, p}) = 1$. Taking into account the previous arguments, we derive the existence of two functions $\varphi_1$ and $\varphi_2$ in $L^\infty(X_\delta)$ with $M(\varphi_1) = L(\varphi_2) = 1$ and $M(\varphi_2) = L(\varphi_1) = 0$. In particular, given any function $\zeta \in L^\infty(S)$ such that $\zeta \geq 0$ on $S \setminus X_\delta$, we can introduce the function
$$\omega_h = \omega_{\lambda, p} + h \big( \zeta - M(\zeta) \varphi_1 - L(\zeta) \varphi_2 \big),$$
for any positive number $h$. When this number is small enough, we check that this function belongs to $L_+^p(S)$, with $M(\omega_h) = L(\omega_h) = 1$. The maximizing nature of the vorticity $\omega_{\lambda, p}$ then gives
$$0 \geq \lim_{h \to 0^+} \frac{d}{dh} \Big( E_{\lambda, p}(\omega_h) \Big) = E'_{\lambda, p}(\omega_{\lambda, p}) \, \zeta - M(\zeta) E'_{\lambda, p}(\omega_{\lambda, p}) \, \varphi_1 - L(\zeta) E'_{\lambda, p}(\omega_{\lambda, p}) \, \varphi_2.$$
In this expression, the differential $E'_{\lambda, p}(\omega)$ is given by
$$E'_{\lambda, p}(\omega) \zeta = N \int_S \zeta \bigg( K_s \omega - \Big( \frac{\omega}{\lambda} \Big)^\frac{1}{{p'} - 1} \bigg).$$
Setting $\mu_{\lambda, p} = E'_{\lambda, p}(\omega_{\lambda, p}) \, \varphi_1/N$, $\alpha_{\lambda, p} = - 2 E'_{\lambda, p}(\omega_{\lambda, p}) \, \varphi_2/N$ and $\psi_{\lambda, p}(x) := K_s \omega_{\lambda, p}(x) + \alpha_{\lambda, p} |x|^2/2 \linebreak[0] - \mu_{\lambda, p}$ as in~\eqref{eq:max-prob-psi}, we obtain
$$\int_S \zeta \bigg( \psi_{\lambda, p} - \Big( \frac{\omega_{\lambda, p}}{\lambda} \Big)^\frac{1}{{p'} - 1} \bigg) \leq 0.$$
Going back to the condition $\zeta \geq 0$ on $S \setminus X_\delta$, we infer that
$$\psi_{\lambda, p} = \Big( \frac{\omega_{\lambda, p}}{\lambda} \Big)^\frac{1}{p' - 1} \text{ on } X_\delta, \quad \text{and} \quad \psi_{\lambda, p} \leq \Big( \frac{\delta}{\lambda} \Big)^\frac{1}{p' - 1} \text{ on } S \setminus X_\delta.$$
Equation~\eqref{eq:max-prob-eq} follows by taking the limit $\delta \to 0$.

Finally, it follows from~\eqref{eq:estim-Ks} that the function $\psi_{\lambda, p}$ is bounded on $S$. In view of~\eqref{eq:max-prob-eq}, so is the vorticity $\omega_{\lambda, p}$. This completes the proof of Lemma~\ref{lem:penalized-max}. \qed

\subsection{Proof of Lemma~\ref{lem:alpha_mu_bounded}}

The proof relies on a uniform bound on the function $K_s \omega_{\lambda, p}$, which is derived from estimates~\eqref{eq:estim-Ks} and~\eqref{eq:bound_Lp}. Indeed, we deduce from the definition of the function $\varpi_\lambda$ that
$$E_{\lambda, p}(\varpi_\lambda) = E_s(\varpi_\lambda) - \frac{N}{p}.$$
Since $p > 1/s$, this gives
$$\big| E_{\lambda, p}(\varpi_\lambda) \big| \leq C_\lambda,$$
where $C_\lambda$ denotes, here as in the sequel, a positive number not depending on $p$ and possibly changing from line to line. Hence, we infer from~\eqref{eq:estim-Ks} and~\eqref{eq:bound_Lp} that
\begin{equation}
\label{eq:Ks_omega_borne}
\| K_s \omega_{\lambda, p} \|_{L^\infty} \leq C_\lambda.
\end{equation}
As a consequence of its definition, the function $\psi_{\lambda, p}$ then satisfies
\begin{equation}
\label{eq:encadre_psi}
\frac{\alpha_{\lambda, p}}{2} r^2 - \mu_{\lambda, p} - C_\lambda \leq \psi_{\lambda, p}(r, \theta) \leq \frac{\alpha_{\lambda, p}}{2} r^2 - \mu_{\lambda, p} +C_\lambda.
\end{equation}
If the Lagrange multipliers $\alpha_{\lambda, p}$ and $\mu_{\lambda, p}$ were not bounded independently of $p$, these estimates would provide a contradiction with the constraints $M(\omega_{\lambda, p}) = L(\omega_{\lambda, p}) = 1$.

More precisely, let us now assume that
\begin{equation}
\label{eq:mu-infini}
\mu_{\lambda, p} \to + \infty,
\end{equation}
as $p \to \infty$. In this case, we can suppose that $\mu_{\lambda, p} > C_\lambda$ for $p$ large enough. It then follows from inequalities~\eqref{eq:encadre_psi} that $\alpha_{\lambda, p} > 0$. Otherwise, the function $\psi_{\lambda, p}$ is non-positive, which eventually contradicts the constraint $M(\omega_{\lambda, p}) = 1$.
Consider next the number
$$r_0(\psi_{\lambda, p}) := \inf_{- \pi/4 \leq \theta \leq \pi/4} \Big( \infess \big( \supp(r \mapsto \big( \psi_{\lambda, p})_+(r,\theta) \big) \big) \Big).$$
Since the function $(\psi_{\lambda, p})_+$ is angular Steiner symmetric, this infimum is achieved for $\theta = 0$. Observe that the inequality $f \geq g$ implies that $r_0(f) \leq r_0(g)$. Therefore, we derive from~\eqref{eq:encadre_psi} that
$$r_0(\omega_{\lambda, p}) = r_0(\psi_{\lambda, p}) \geq r_0 \Big( \frac{\alpha_{\lambda, p}}{2} |\cdot|^2 - \mu_{\lambda, p} + C_\lambda \Big) = \sqrt{\frac{2}{\alpha_{\lambda, p}} (\mu_{\lambda, p} - C_\lambda)} := r_0^\ast.$$
Similarly, we obtain
$$r_0(\omega_{\lambda, p}) = r_0(\psi_{\lambda, p}) \leq r_0 \Big( \frac{\alpha_{\lambda, p}}{2} |\cdot|^2 - \mu_{\lambda, p} - C_\lambda \Big) = \sqrt{\frac{2}{\alpha_{\lambda, p}} (\mu_{\lambda, p} + C_\lambda)} := r_1^\ast,$$
and we notice that
\begin{equation}
\label{eq:r_0_r_1_equivalent}
\frac{r_0^\ast}{r_1^\ast} = \sqrt{\frac{\mu_{\lambda, p} - C_\lambda}{\mu_{\lambda, p} + C_\lambda}} \to 1.
\end{equation}
when $p \to \infty$. Given a positive number $\delta < 1$, we now face two possibilities.

\begin{Case}
$\limsup_{p \to \infty} r_1^\ast \geq 1 + \delta$.
\end{Case}
It then follows from~\eqref{eq:r_0_r_1_equivalent} that
$$\limsup_{p \to \infty} r_0^\ast \geq 1 + \delta,$$
and we obtain the following contradiction
\begin{equation}
\label{eq:r_0_ast_2}
1 = \int_S |x|^2 \omega_{\lambda, p}(x) dx \geq r_0(\omega_{\lambda, p})^2 \int_S \omega_{\lambda, p} \geq (r_0^\ast)^2 \int_S \omega_{\lambda, p} = (r_0^\ast)^2 > 1,
\end{equation}
with the constraints $M(\omega_{\lambda, p}) = L(\omega_{\lambda, p}) = 1$ for a number $p$ large enough.

\begin{Case}
$\limsup_{p \to \infty} r_1^\ast \leq 1 + \delta$.
\end{Case}
In this case, we have $ r_1^\ast \leq 1 + \delta$ for any number $p$ large enough. Combining the constraint $M(\omega_{\lambda, p}) = 1$ with~\eqref{eq:encadre_psi}, we get
\begin{equation}
\label{eq:integral_estimation_2}
\int_{r_1^\ast}^{2} \Big( \frac{\alpha_{\lambda, p}}{2} r^2 - \mu_{\lambda, p} - C_\lambda \Big)^{{p'} - 1} r dr \leq \frac{1}{\lambda \pi}.
\end{equation}
The computation of the integral leads to 
$$\frac{\alpha_{\lambda, p}}{2} 4 - \mu_{\lambda, p} - C_\lambda \leq \bigg( \frac{{p'} \alpha_{\lambda, p}}{\lambda \pi}\bigg)^\frac{1}{{p'}}.$$
Assuming that $\delta$ is small enough and applying the Young inequality to the right-hand side of this inequality give
$$\frac{\alpha_{\lambda, p}}{2} 4 - \mu_{\lambda, p} - C_\lambda \leq \frac{\delta\, \alpha_{\lambda, p}}{2} + \frac{1}{p} \bigg( \frac{2}{\delta \lambda \pi} \bigg)^\frac{p}{{p'}},$$
and then
$$(4 - \delta) \frac{\alpha_{\lambda, p}}{2} \leq \frac{1}{p} \bigg( \frac{2}{\delta \lambda \pi} \bigg)^\frac{p}{{p'}} + \mu_{\lambda, p} + C_\lambda.$$
In view of the definition of $r_0^\ast$, we are led to
$$(r_0^\ast)^2 \geq \frac{(4 - \delta) (\mu_{\lambda, p} -
C_\lambda)}{\mu_{\lambda, p} + C_\lambda + \frac{1}{p} \big( \frac{2}{\delta
\lambda \pi} \big)^{p - 1}} \underset{\mu_{\lambda, p} \to \infty}{\to} 4^2 - \delta.$$
Since $\delta$ is arbitrary small, we finally get a contradiction by arguing as for~\eqref{eq:r_0_ast_2}.

We conclude that assumption~\eqref{eq:mu-infini} does not hold, and we can argue similarly in order to prove that 
$\mu_{\lambda, p}$ does not converge towards $- \infty$ when $p \to \infty$. This shows that the numbers $\mu_{\lambda, p}$ remain bounded in this limit, and so do the numbers $\alpha_{\lambda, p}$ due to the constraint $M(\omega_{\lambda, p}) = 1$. \qed

\subsection{Proof of Lemma~\ref{lem:omega-bounded}}

The proof relies on standard regularity theory. In view of the definition of the function $\psi_{\lambda, p}$, we can derive from~\eqref{eq:Ks_omega_borne} and Lemma~\ref{lem:alpha_mu_bounded} that
\begin{equation}
\label{eq:psi-unif}
\| \psi_{\lambda, p} \|_{L^\infty} \leq C_\lambda,
\end{equation}
where $C_\lambda$ denotes, here as in the sequel, a positive number not depending on $p$ and possibly changing from line to line. In view of~\eqref{eq:max-prob-eq}, we obtain
$$\| \omega_{\lambda, p} \|_{L^\infty} \leq \lambda C_\lambda^\frac{1}{p - 1}.$$
Since $p \geq p_0$, this is enough to get
$$\| \omega_{\lambda, p} \|_{L^\infty} \leq C_\lambda,$$
and to deduce~\eqref{eq:bound-omega} from the constraint $M(\omega_{\lambda, p}) = 1$ and the H\"older inequality. As a consequence of~\eqref{eq:psi-unif}, we also observe that
\begin{equation}
\label{eq:psi-Lr}
\| \psi_{\lambda, p} \|_{L^r(B(0, R))} \leq C_\lambda R^\frac{2}{r},
\end{equation}
for any $1 \leq r \leq \infty$ and any positive number $R$.

For $1 < r < \infty$, we next recall the existence of a positive number $A_{s, r}$, depending only on $s$ and $r$, such that
\begin{equation}
\label{eq:2s-regularity}
\| K_s f \|_{\dot{W}^{2 s, r}(\RR^2)} \leq A_{s, r} \| f \|_{L^r(\RR^2)},
\end{equation}
for any function $f \in L^r(\RR^2)$. Applying this inequality to the function $f = \omega_{\lambda, p} \mathbbm{1}_{\supp(\omega_{\lambda, p})}$, we obtain
$$\| K_s \omega_{\lambda, p} \|_{\dot{W}^{2 s, r}(\RR^2)} \leq A_{s, r} \| \omega_{\lambda, p} \|_{L^r(S)},$$
and~\eqref{eq:bound-psi} results from~\eqref{eq:psi-Lr} and Lemma~\ref{lem:alpha_mu_bounded}.

When $1/2 \leq s < 1$, this guarantees that the function $\psi_{\lambda, p}$ lies in $W^{1, r}(S)$ for any $1 < r < \infty$, so that its positive part $(\psi_{\lambda, p})_+$ is also in these spaces. Hence, we can compute
$$\nabla^\perp \Big( (\psi_{\lambda, p})_+^{p'} \Big) = p' (\psi_{\lambda, p})_+^{p' - 1} \nabla^\perp \psi_{\lambda, p} = \frac{p'}{\lambda}\omega_{\lambda, p} \nabla^\perp \psi_{\lambda, p},$$
and this identity holds in the sense of integrable functions. Given a function $\varphi \in \cC_c^\infty(S)$, we conclude that
$$\int_S \omega_{\lambda, p}(x) \, \nabla^\perp \psi_{\lambda, p}(x) \cdot \nabla \varphi(x) \, dx = \int_S \nabla^\perp \Big( (\psi_{\lambda, p})_+^{p'} \Big)(x) \cdot \nabla \varphi(x) \, dx = 0,$$
by integrating by parts. \qed

\subsection{Proof of Lemma~\ref{lem:Riesz-bathtub}}

Invoking the Riesz rearrangement inequality, we reduce the analysis of the maximization problem~\eqref{eq:double_integral_to_maximize} to the situation where both the functions $g$ and $h$ are even and non-increasing on $\RR_+$. In this case, since the function $f$ is even and non-increasing on $\RR_+$, so is the function $f \star g$. In particular, the sets $\{ f \star g > t \}$ have finite measure for any positive number $t$. Therefore, we can apply the bathtub principle in~\cite[Theorem 1.14]{LiebLos0} in order to show that the function $h_\mu = \mathbbm{1}_{\big[ - \frac{\mu}{2}, \frac{\mu}{2} \big]}$ is a solution to the problem
$$\cJ_\mu := \max_{h \in \cG_\mu(\RR)} \int_\RR f \star g(y) \, h(y) \, dy.$$
Moreover, when $f$ is decreasing on $\RR_+$, so is the function $f \star g$, and the bathtub principle guarantees the uniqueness of the solution $h_\mu$. We finally complete the proof of Lemma~\ref{lem:Riesz-bathtub} by observing that the functions $g$ and $h$ play symmetric roles in the maximization problem~\eqref{eq:double_integral_to_maximize}. \qed

\subsection{Proof of Proposition~\ref{prop:patch-exist}}
\label{sub:prop-patch-exist}

Let $p > 1/s$. Consider a solution $\omega_{\lambda, p}$ to the maximization problem~\eqref{eq:pen-max-prob} and the corresponding Lagrange multipliers $\alpha_{\lambda, p}$ and $\mu_{\lambda, p}$ given by Lemma~\ref{lem:penalized-max}. Up to an omitted subsequence, we first derive from Lemma~\ref{lem:alpha_mu_bounded} the existence of two numbers $\alpha_\lambda$ and $\mu_\lambda$ such that
\begin{equation}
\label{eq:conv-alpha-mu}
\alpha_{\lambda, p} \to \alpha_\lambda \quad \text{and} \quad \mu_{\lambda, p} \to \mu_\lambda,
\end{equation}
when $p \to \infty$. In view of Lemma~\ref{lem:omega-bounded}, we can invoke the Sobolev embedding theorem in order to exhibit a continuous function $\psi_\lambda : \RR^2 \to \RR$ such that, up to a further subsequence,
\begin{equation}
\label{eq:conv-psi}
\psi_{\lambda, p} \to \psi_\lambda \text{ in } L^\infty(B(0, R)),
\end{equation}
as $p \to \infty$, for any positive number $R$. In particular, this convergence provides
\begin{equation}
\label{eq:smaller_than_lambda}
\limsup_{p \to \infty} \big\| \omega_{\lambda, p} \big\|_{L^\infty(S)} \leq \lambda \lim_{p \to \infty} \big\| (\psi_{\lambda, p})_+ \big\|_{L^\infty(S)}^{p' - 1} \leq \lambda.
\end{equation}
Going back to Lemma~\ref{lem:omega-bounded}, we can apply the Banach-Alaoglu theorem to construct a non-negative function $\omega_\lambda \in L^\infty(S)$ such that, up to a further subsequence,
\begin{equation}
\label{eq:conv-omega}
\omega_{\lambda, p} \overset{\star}{\rightharpoonup} \omega_\lambda \text{ in } L^\infty(S),
\end{equation}
when $p \to \infty$. In view of~\eqref{eq:smaller_than_lambda}, the function $ \omega_\lambda$ lies in $L_\lambda^\infty(S)$ and we can extend it as a function in $X_\lambda$ by performing a $N$-fold symmetrization. Moreover, we infer from~\eqref{eq:conv-alpha-mu},~\eqref{eq:conv-psi} and~\eqref{eq:conv-omega} that the function $\psi_\lambda$ is given by~\eqref{def:psi-lambda-S}.

We now check that this function solves the maximization problem~\eqref{def:max-prob-S}. Since the maps $x \mapsto 1$ and $x \mapsto |x|^2$ are locally integrable, we derive from the constraints $M(\omega_{\lambda, p}) = L(\omega_{\lambda, p}) = 1$ and~\eqref{eq:conv-omega} that $M(\omega_\lambda) = L(\omega_\lambda) = N$. Consider next a function $\omega \in X_\lambda$ such that $M(\omega) = L(\omega) = N$. It follows from Lemma~\ref{lem:penalized-max} and~\eqref{eq:smaller_than_lambda} that
$$E_s(\omega) \leq E_s(\omega_{\lambda, p}) + \frac{\lambda N}{p} \int_S \Big( \frac{\omega}{\lambda} \Big)^p \leq E_s(\omega_{\lambda, p}) + \frac{\lambda N |S|}{p}.$$
Invoking the compact embedding of $L^\infty(S)$ into $\dot{H}^{- s}(S)$, we can extract a further subsequence for which
$$E_s(\omega_{\lambda, p}) \to E_s(\omega_\lambda),$$
when $p \to \infty$. Hence, we are led to
$$E_s(\omega) \leq E_s(\omega_\lambda),$$
by taking the limit $p \to \infty$ in the previous inequality. Therefore, the function $\omega_\lambda$ solves the maximization problem~\eqref{def:max-prob-S}.

At this point, we explain why the function $\omega_\lambda$ is a vortex patch. Given any function $\omega \in L_\lambda^\infty(S)$, we define the function $\widehat{\omega}$ as being the only angular Steiner symmetric function of $L_\lambda^\infty(S)$, with values into the pair $\{ 0, \lambda \}$, such that
$$\int_{- \frac{\pi}{2N}}^\frac{\pi}{2N} \omega(r, \theta) \, d\theta = \int_{- \frac{\pi}{2N}}^\frac{\pi}{2N} \widehat{\omega}(r, \theta) \, d\theta,$$
for almost any $\frac{1}{2} < r < 2$. Using the decreasing nature of the map $\xi
\mapsto \kappa_s(r, r', \xi)$ given by Lemma~\ref{lem:combinatorics}, we deduce from Lemma~\ref{lem:Riesz-bathtub} that
\begin{align*}
\int_{- \frac{\pi}{2 N}}^\frac{\pi}{2 N} \int_{- \frac{\pi}{2 N}}^\frac{\pi}{2 N} k_s(r, r', \theta - \theta') \, \omega_1(r, \theta) \, \omega_2(r', \theta') & \, d\theta \, d\theta'\\
& \leq \int_{- \frac{\pi}{2 N}}^\frac{\pi}{2 N} \int_{- \frac{\pi}{2 N}}^\frac{\pi}{2 N} k_s(r, r', \theta - \theta') \, \widehat{\omega_1}(r, \theta) \, \widehat{\omega_2}(r', \theta') \, d\theta \, d\theta',
\end{align*}
for any functions $(\omega_1, \omega_2) \in L_\lambda^\infty(S)^2$ and almost
any $(r, r') \in (\RR_+)^2$. Moreover, this inequality is an equality if and
only if $\omega_1(r, \cdot) = \widehat{\omega_1}(r, \cdot) $ and $\omega_2(r,'
\cdot) = \widehat{\omega_2}(r', \cdot) $ for almost any $- \pi/(2 N) \leq \theta
\leq \pi/(2 N)$. Therefore, we deduce
$$E_s(\omega) \leq E_s(\widehat{\omega}),$$
with equality if and only if $\omega = \widehat{\omega}$. This guarantees that the function $\omega_\lambda = \widehat{\omega_\lambda}$ is angular Steiner symmetric, and up to the multiplicative factor $\lambda$, equal to the characteristic function of a measurable subset $\Omega_\lambda$ of $S$.

Since $\omega_{\lambda, p} = (\psi_{\lambda, p})_+^{p' - 1}$, we derive from~\eqref{eq:conv-psi} that this set satisfies
$$\big\{ \psi_\lambda > 0 \big\} \subseteq \Omega_\lambda \subseteq \big\{ \psi_\lambda \geq 0 \big\}.$$
In order to conclude that this set matches (up to a set of measure $0$) with the
support of the function $(\psi_\lambda)_+$, we are reduced to show that the set
$\{ \psi_\lambda = 0 \}$ has zero measure. Let $\frac{1}{2} < r < 2$ and $0 < \theta_1, \theta_2 < \pi/4$, with $\theta_1 < \theta_2$. In view of definition~\eqref{def:psi-lambda-S}, we compute
\begin{align*}
& \psi_\lambda(r, \theta_1) - \psi_\lambda(r, \theta_2) = K_s \omega_\lambda(r, \theta_1) - K_s \omega_\lambda(r, \theta_2)\\
& = \int_{\frac{1}{2}}^{2} \int_0^\frac{\pi}{2 N} \Big( k_s(r, r', \theta' - \theta_1) - k_s(r, r', \theta' - \theta_2) \Big) \, \omega_\lambda(r', \theta') \, r' \, dr' \, d\theta,
\end{align*}
and this double integral is positive due to the fact that the map $\xi \mapsto k_s(r, r', \xi)$ is decreasing on $(0, \pi/2 N)$ as a consequence of Lemma~\ref{lem:combinatorics} below. By the Fubini theorem, we conclude that the measure of the set $\{ \psi_\lambda = 0 \}$ is equal to $0$, which eventually provides~\eqref{eq:omega-lambda}.

We finally turn to the proof of~\eqref{eq:weak-formulation-S}. When $1/2 \leq s < 1$, Lemma~\ref{lem:omega-bounded} guarantees that the functions $\psi_{\lambda, p}$ are uniformly bounded in $W^{1, r}(B(0, R))$ with respect to $p \to \infty$ for any exponent $1 < r < \infty$ and any positive number $R$. As a consequence, we can assume that the function $\psi_\lambda$ lies in $W_{\text{loc}}^{1, r}(\RR^2)$. In particular, we are allowed to write
$$\nabla^\perp \big( (\psi_{\lambda, p})_+ \big) = \mathbbm{1}_{\{ \psi_\lambda > 0 \}} \nabla^\perp \psi_{\lambda, p},$$
so that, by integrating by parts,
$$\int_S \omega_\lambda(x) \, \nabla^\perp \psi_\lambda(x) \cdot \nabla \varphi(x) \, dx = \lambda \int_S \nabla^\perp \big( \psi_\lambda \big)_+(x) \cdot \nabla \varphi(x) \, dx = 0,$$
for any function $\varphi \in \cC_c^\infty(S)$. This completes the proof of Proposition~\ref{prop:patch-exist}. \qed

\subsection{Proof of Lemma~\ref{lem:energy}}

We infer from the Riesz rearrangement inequality that
\begin{equation*}
\int_X \int_X \left( \tfrac{\varepsilon}{|x - x'|} \right)^{2 (1 - s)} \omega(x) \, \omega(x') \, dx \, dx' 
\leq 
\lambda^2 \int_{\cB(0, \varepsilon \sqrt{M(\omega, X)})} \int_{\cB(0,
\varepsilon \sqrt{M(\omega, X)})} \left( \tfrac{\varepsilon}{|x-x'|} \right)^{2 (1 - s)} \, dx \, dx',
\end{equation*}
and the first inequality in the statement of the lemma follows from the definition of $\cI_s.$
Regarding the second statement, 
since the map $x \mapsto |x|^{2 (s - 1)}$ is radially decreasing, we can rearrange the function $\omega$ so as to write
\begin{align*}
\int_X \omega(x') \int_{X'} \bigg( \frac{\varepsilon}{|x - x'|} \bigg)^{2 (1 - s)} \omega(x) \, dx \, dx' 
& \leq \lambda \int_X \omega(x') \int_{\cB(x, \varepsilon \sqrt{M(\omega, X')})} 
\bigg( \frac{\varepsilon}{|x - x'|} \bigg)^{2 (1 - s)} \, dx \, dx'\\
& = \lambda \, M(\omega, X) \int_{\cB(0, \varepsilon \sqrt{M(\omega, X')})} 
\bigg( \frac{\varepsilon}{|y|} \bigg)^{2 (1 - s)} \, dy.
\end{align*}
and the conclusion follows from the identity $\lambda \pi \varepsilon^2 = 1$.
Finally, the last statement is a direct consequence of the inequality
$$|x - x'| \geq d(X, X'),$$
which holds for any $x \in X$ and $x' \in X'$.\qed

\subsection{Proof of Lemma~\ref{lem:E-s-bounds}}

On the one hand, since $\omega_\lambda$ is the maximizer of the energy $E_s$,
\begin{equation}
\label{eq:energy bigger that the ball}
E_s(\varpi_\lambda) \leq E_s(\omega_\lambda),
\end{equation}
where $\varpi_\lambda$ was defined in~\eqref{eq:overline omega}. Then,
$$E_s(\varpi_\lambda) \geq c_s \int_S \int_S \frac{\lambda
\mathbbm{1}_{\cB((r_\varepsilon, 0), \varepsilon)}(x) \,
\lambda\mathbbm{1}_{\cB((r_\varepsilon, 0), \varepsilon)}(x')}{|x - x'|^{2 (1 -
s)}} \, dx \, dx' = \frac{c_s}{\varepsilon^{2 (1 - s)}}\cI_s.$$
Together with~\eqref{eq:energy bigger that the ball}, this gives the lower bound in~\eqref{eq:rescaled energy}. Concerning the upper bound,
$$E_s(\omega_\lambda) \leq c_s \int_S \int_S \frac{\omega_\lambda(x) \omega_\lambda(x')}{|x - x'|^{2 (1 - s)}} \, dx \, dx' + C.$$
Using the Riesz rearrangement inequality,
$$E_s(\omega_\lambda) \leq c_s \int_S \int_S \frac{\lambda \mathbbm{1}_{\cB(0,
\varepsilon)}(x) \, \lambda \mathbbm{1}_{\cB(0, \varepsilon)}(x')}{|x - x'|^{2
(1 - s)}} \, dx \, dx' + C = \frac{c_s}{\varepsilon^{2 (1 - s)}} \cI_s + C.$$
The lemma is proved. \qed

\subsection{Proof of Lemma~\ref{lem:Iam not small}}

First, we expand the double integral in~\eqref{eq:rescaled energy} as
\begin{align*}
I_s(\omega,S,S) = & \int_S \int_{\cB(x, \Lambda_0 \varepsilon)} \bigg( \frac{\varepsilon}{|x - x'|}\bigg)^{2 (1 - s)} 
\omega(x')dx'\omega(x)dx\\
& + \int_S \int_{\cB(x, \Lambda_0 \varepsilon)^c} 
\bigg( \frac{\varepsilon}{|x - x'|}\bigg)^{2 (1 - s)} 
\omega(x')dx' \, \omega(x)dx.
\end{align*}
Let $\eta > 0$ and $C > 0$ to be precised later, and set
\begin{equation}
\label{eq:set Omega}
\Upsilon_\eta^{\Lambda_0} := \Big\{ x \in \supp(\omega) : \meas \big( \cB(x, \Lambda_0 \varepsilon) 
\cap \supp(\omega)\big) \geq \eta \pi \varepsilon^2 \Big\}.
\end{equation}
Assume for the sake of a contradiction that $\Upsilon_\eta^{\Lambda_0}$ is empty. Then a rearrangement argument 
with respect to the variable $x'$ yields
$$
\int_S \int_{\cB(x, \Lambda_0 \varepsilon)} \bigg( \frac{\varepsilon}{|x - x'|}
\bigg)^{2 (1 - s)} \omega(x')dx' \omega(x)dx 
< 
\lambda \int_S \int_{\cB(x, \sqrt{\eta} \varepsilon)} 
\bigg( \frac{\varepsilon}{|x - x'|}\bigg)^{2 (1 - s)} dx' \omega(x) dx = \frac{\eta^s}{s}.$$
On the other hand, we check that
$$
\int_S \int_{\cB(x, \Lambda_0 \varepsilon)^c} 
\bigg( \frac{\varepsilon}{|x - x'|} \bigg)^{2 (1 - s)}
\omega(x')dx'\omega(x)dx \leq \frac{1}{\Lambda_0^{2 (1 - s)}},
$$
so that using~\eqref{eq:rescaled energy} we obtain 
$$
s\cI_s < \eta^s + \frac{s}{\Lambda_0^{2 (1 - s)}}.
$$
In view of the inequality $s\cI_s \geq \frac16,$ this provides the desired 
contradiction if we choose $\eta$ sufficiently small and $\Lambda_0$ 
sufficiently large (both depending only on $s$). It suffices then to choose 
for $x$ an arbitrary point in $\Upsilon_{\eta}^{\Lambda_0}.$
\qed

\subsection{Proof of Lemma~\ref{lem:decay}}

\indent\emph{Step 1.} Let $A \subset B$ be arbitrary measurable subsets in $\RR^2$ such
that $K\varepsilon:= d(A, B^c) > 0.$ We decompose the double integral in Lemma~\ref{lem:E-s-bounds} as
\begin{equation}
\label{eq:six big integrals}
\begin{split}
\cI_s &\leq I_s(\omega, A, A) + I_s(\omega, B\setminus A, B\setminus A) +
I_s(\omega, B^c, B^c)\\ 
&+ 2 I_s(\omega, A\cup B^c, B \setminus A) + 2I_s(\omega, A,
B^c).
\end{split}
\end{equation}
To further simplify the notations, we also set
\begin{align*}
\eta_0 := M(\omega, A), \ \eta_1 := M(\omega, B \setminus A) \text{ and }
\eta_2 := M(\omega, B^c),
\end{align*}
and in particular since $M(\omega) = 1$ we have $\eta_0 + \eta_1 + \eta_2 = 1.$

We now apply the estimates in Lemma~\ref{lem:energy} to~\eqref{eq:six big
integrals}. More precisely, we apply the first one for the first three terms in~\eqref{eq:six big
integrals}, the second one for the fourth term, and the third one for the last
term. This yields, after division by $\cI_s$, 
\begin{equation}
\label{eq:beautiful estimate}
1 \leq \eta_0^{1 + s} + \eta_1^{1 + s} + \eta_2^{1 + s} 
+ \frac{2}{\cI_s} \bigg( \frac{\eta_1 (\eta_0 + \eta_2)^s}{s} 
+ \frac{\eta_0 \eta_2}{K^{2(1 - s)}} \bigg).
\end{equation}
Since $\eta_0$, $\eta_1$, $\eta_2$ and $\eta_0 + \eta_2$ are less than $1$, we
further deduce 
\begin{equation}
\label{eq:smaller than one}
1 \leq \eta_0^{s + 1} + \eta_2^{s + 1} + \Big( 1 + \frac{2}{s \cI_s} \Big) \eta_1 + \frac{2}{\cI_s K^{2 (1 - s)}}.
\end{equation}
In~\eqref{eq:smaller than one}, we wish to view the last two terms as negligible, which
requires in particular $\eta_1$ to be small, and then use a convexity
inequality to deduce that the one on the left hand side cannot be too much spread across
$\eta_0$ and $\eta_2.$ More precisely, we require $K$ to satisfy $K \geq
\Lambda_0$, where $\Lambda_0$ is the constant obtained from Lemma 
\ref{lem:Iam not small}, and we define $A = \cB(x, mK\varepsilon)$ and $B = \cB(x,
(m+1)K\varepsilon)$ where $x$ is also given by Lemma~\ref{lem:Iam not small} and 
$m \in \mathbb{N}_\ast$ is to be chosen hereafter. Note that since $\cB(x,
\Lambda_0 \varepsilon) \subset A$, by Lemma~\ref{lem:Iam not small} we have
\begin{equation}
\label{eq:eta0 pas small}
\eta_0 \geq \eta > 0.
\end{equation}
The integer $m \geq 1$ is chosen in such a way that $\eta_1 = M(\omega,
B\setminus A)$ is small. By additivity of the integral, and since
$M(\omega_\lambda, S) = 1,$ we may find $1 \leq m \leq K^{2(1-s)}$
such that $\eta_1 \leq 1 / K^{2(1-s)}.$ From~\eqref{eq:smaller than one}, we
therefore obtain 
$$
1 - \eta_0^{s + 1} - (1 - \eta_0)^{s + 1} \leq 
\Big( 1 + \frac{2 (1 + s)}{s \cI_s} \Big) \frac{1}{K^{2(1 - s)}}.
$$
We now use the concavity of the function $t \longmapsto 1 - t^{s + 1} - (1 - t)^{s + 1}$ 
on the segment $[0, 1]$ in order to obtain
$$1 - t^{s + 1} - (1 - t)^{s + 1} \geq (1 - 2^{- s}) \bigg( \frac{1}{2} - \Big| t - \frac{1}{2} \Big| \bigg),$$
so that we are reduced to the alternative
\begin{equation}
\label{eq:alternative}
\eta_0 \leq \frac{C}{K^{2(1 - s)}} \quad \text{or} \quad 1 - \eta_0 \leq
\frac{C}{K^{2(1 - s)}},
\end{equation}
for some constant $C$ depending only on $s.$ If $K$ is chosen sufficiently
large, only the first alternative can hold in view of~\eqref{eq:eta0 pas small}.
Note also that $mK \leq K ^{1 + 2(1-s)}$. We finally fix the value of $K$ in such
a way that $K^{1 + 2(1-s)} = \Lambda$, provided $\Lambda$ is sufficiently large so that the
previous requirements on $K$ hold with such a choice. If it is not the case, then the 
conclusion of Lemma~\ref{lem:decay} can simply be obtained by choosing $C$ 
sufficiently large so that $C/\Lambda^{\gamma_s}\geq 1$, in which case it yields 
a trivial inequality. Therefore, 
\begin{equation}
\label{eq:tagada}
M(\omega, \cB(x,\Lambda\varepsilon)^c) \leq 1 - \eta_0 \leq \frac{C}{K^{2(1-s)}} =
\frac{C}{\Lambda^{\gamma_s}}.
\end{equation}

\emph{Step 2.} To conclude the proof this lemma, we show that in the above 
estimate, $x$ can be chosen equal to $(1,0)$. We start with the following lower 
and upper bound of $L(\omega)$ using $|x'|\leq 2$,
\begin{equation}
\label{eq:fatigue}
\begin{split}
\big( \inf_{x' \in \cB(x, \Lambda\varepsilon)} &|x'|^2\big) M\big(\omega,\cB(x, \Lambda \varepsilon)\big) \leq L(\omega) \\&\leq \big( \sup_{x' \in \cB(x, \Lambda \varepsilon)} |x'|^2 \big) \, 
M\big(\omega,\cB(x, \Lambda \varepsilon)\big) + 4 M\big(\omega,S\setminus\cB(x, \Lambda \varepsilon)\big).
\end{split}
\end{equation}
We note that in the statement of Lemma~\ref{lem:Iam not small}, if in addition $\omega$ is assumed to be 
angular Steiner symmetric then the point $x$ can be chosen of the form 
$x = (r, 0)$. Indeed, under Steiner symmetry the required 
inequality is improved
by shifting $x$ along the angular variable until it reaches the horizontal
axis. With $x=(r,0)$, the estimate~\eqref{eq:fatigue} becomes
\begin{align*}
\Big( r - \frac{\Lambda^2}{\pi \lambda} \Big) M\big(\omega,\cB(x, \Lambda \varepsilon)\big) \leq L(\omega) \leq \Big( r + \frac{\Lambda^2}{\pi \lambda} \Big) \,M\big(\omega,\cB(x, \Lambda \varepsilon)\big) + 4 M\big(\omega,S\setminus\cB(x, \Lambda \varepsilon)\big).
\end{align*}
In view of the constraints $M(\omega) = L(\omega) = 1$ and using~\eqref{eq:tagada},
$$\Big (r- \frac{\Lambda^2}{\pi \lambda} \Big) \, \Big( 1 - 
\frac{C}{\Lambda^{\gamma_s}} \Big) \leq 1 \leq r+ \frac{\Lambda^2}{\pi \lambda} + 4\frac{C}{\Lambda^{\gamma_s}}.$$
Since $\Lambda$ is any positive number, in the limit $\lambda \to \infty$, this gives $x=(r,0)\to(1,0)$. 
Using again the constraint $M(\omega) = L(\omega) = 1$ and ~\eqref{eq:tagada} improves the convergence with $|x-(1,0)|\leq C\varepsilon.$ With such a convergence rate at hand, we can transform~\eqref{eq:tagada} into
$$
M(\omega, \cB((1,0),\Lambda\varepsilon)^c) \leq \frac{C}{\Lambda^{\gamma_s}},
$$
which is the announced estimate.
\qed

\subsection{Proof of Lemma~\ref{lem:jen ai marre}}

In this proof, we focus on the support of the vorticity, that is the subset of 
$S$, which is composed of the points $x \in S$ such that $\psi_\lambda(x) \geq 
0$. Under this condition, we can come back to the definition of the function 
$\psi_\lambda$ in order to obtain the inequality
\begin{equation}
\label{eq:the difference}
0 \geq \psi_\lambda(y) - \psi_\lambda(x) = K_s \omega_\lambda(y) - 
K_s \omega_\lambda(x) + \frac{\alpha}{2} \big( |y|^2 - |x|^2 \big), 
\end{equation}
for any point $y \in S$ such that $\psi_\lambda(y) \leq 0$. In 
order to establish that the support of the vorticity has no intersection with 
the complementary set of $\cB( (1,0), 2 \Lambda \varepsilon) \cup B$, we rely on 
the previous inequality for two different choices of the point $y$. Technically, 
our argument first requires to prove the two following estimates on the function 
$K_s \omega_\lambda$.

Let $\Lambda$ be a positive number. When $z \in \cB( (1, 0), \Lambda 
\varepsilon)$, we can use the fact that $\cB( (1,0), \Lambda \varepsilon) 
\subseteq S$ in order to get the lower estimate
\begin{equation} 
\label{eq:estimate K_s below}
K_s \omega_\lambda(z) \geq c_s \int_S\frac{\omega_\lambda(y)}{|z - y|^{2(1 - s)}} \, dy - C \geq \frac{c_s}{(2 
\Lambda \varepsilon)^{2 (1 - s)}} M(\omega_\lambda, \cB( (1, 0), \Lambda 
\varepsilon)) - C.
\end{equation}
On the contrary, for $z \in \cB( (1, 0), 2 
\Lambda \varepsilon)^c$, we can split the integration domain $S$ into the ball 
$\cB( (1, 0), \Lambda \varepsilon)$ and its complementary set, so as to obtain 
$$K_s \omega_\lambda(z) \leq c_s \int_S \frac{\omega_\lambda(y)}{|z - y|^{2 (1 - 
s)}} \, dy + C \leq c_s \int_{S \setminus \cB( (1, 0), \Lambda \varepsilon)} 
\frac{\omega_\lambda(y)}{|z - y|^{2 (1 - s)}} \, dy + \frac{C_s}{(R 
\varepsilon)^{2 (1 - s)}} + C.$$ A rearrangement argument then provides the 
upper estimate
\begin{equation}
\label{eq:estimate K_s above}
K_s \omega_\lambda(z) \leq \frac{c_s}{s \varepsilon^{2 (1 - s)}} M(\omega_\lambda, S 
\setminus \cB( (1, 0), \Lambda \varepsilon))^s + \frac{c_s}{(\Lambda 
\varepsilon)^{2 (1 - s)}} + C.
\end{equation}

With these estimates at hand, we now assume for the sake of a contradiction the 
existence of a point $x \in S \setminus \big( \cB( (1, 0), 2 \Lambda 
\varepsilon) \cup B \big)$ such that $\psi_\lambda(x) \geq 0$. Recall here that 
\begin{equation}
\label{def:B again}
B = \Big\{ (r, \theta) \in S : \frac{1}{2} 
\leq r \leq \frac{1}{2} + \frac{1}{8} \text{ or } 2 - \frac{1}{8} \leq r \leq 2 
\Big\}.
\end{equation}
When $\varepsilon$ is small enough, we can find a point 
$y_1 \in S$ such that $\psi_\lambda(y_1) \leq 0$, and which satisfies the two 
conditions
\begin{equation}
\label{eq:pouet pouet}
\sign(\alpha) = \sign \big( 
|y_1|^2 - |x|^2 \big) \quad \text{and} \quad \big| |y_1|^2 - |x|^2 \big| \geq 
\frac{1}{10}.
\end{equation}
Such a point necessarily exists due to the 
constraints $M(\omega_\lambda) = L(\omega_\lambda) = 1$, the vorticity 
concentration provided by Lemma~\ref{lem:decay} and the fact that $x\in 
S\setminus B$. Combining the inequalities~\eqref{eq:the difference} 
and~\eqref{eq:pouet pouet} with the estimate~\eqref{eq:estimate K_s above} leads 
to the bound
\begin{equation}
\label{eq:estimate alpha}
\frac{|\alpha|}{20} \leq 
\frac{c_s}{s \varepsilon^{2 (1 - s)}} M(\omega_\lambda, S \setminus \cB( (1, 0), 
\Lambda \varepsilon) )^s + \frac{c_s}{(\Lambda \varepsilon)^{2 (1 - s)}} + C. 
\end{equation}

Consider now a further positive number $\Lambda_1$, as well as a point $y_2 \in 
\cB( (1,0), \Lambda_1 \varepsilon)$ such that $\psi_\lambda(y_2) \leq 0$. Provided that $\Lambda_1 > 1$, the 
existence of this point follows from the property that the support of the 
vorticity has measure $\pi \varepsilon^2$. In 
view of~\eqref{eq:estimate K_s below} and~\eqref{eq:estimate K_s above}, we can 
estimate~\eqref{eq:the difference} so as to obtain
$$\frac{c_s}{(2 \Lambda_1 \varepsilon)^{2 (1 - s)}} M(\omega_\lambda, \cB( (1, 0), 
\Lambda_1 \varepsilon)) \leq \frac{c_s}{s \varepsilon^{2 (1 - s)}} 
M(\omega_\lambda, S \setminus \cB( (1, 0), \Lambda \varepsilon))^s + 
\frac{c_s}{(\Lambda \varepsilon)^{2 (1 - s)}} + 4 |\alpha| + C.$$
Hence, we deduce from~\eqref{eq:estimate alpha} that
$$\frac{1}{(2 \Lambda_1)^{2 
(1 - s)}} M(\omega_\lambda, \cB( (1, 0), \Lambda_1 \varepsilon)) \leq C \Big( 
M(\omega_\lambda, \cB((1, 0), S \setminus \Lambda \varepsilon))^s + 
\frac{1}{\Lambda^{2 (1 - s)}} + \varepsilon^{2 (1 - s)}\Big).$$
We finally 
invoke the decay estimate provided by Lemma~\ref{lem:decay} in order to get 
$$\frac{1}{(2 \Lambda_1)^{2 (1 - s)}} \Big( 1 - \frac{1}{\Lambda_1^{\gamma_s}} 
\Big) \leq C\Big( \frac{1}{\Lambda^{s\gamma_s}} + \frac{1}{\Lambda^{2 (1 - s)}} 
+ \varepsilon^{2 (1 - s)}\Big).$$
For $\varepsilon$ small enough, this 
inequality is false for a fixed number $\Lambda_1$, when $\Lambda$ is chosen 
large enough. This contradiction concludes the proof of Lemma~\ref{lem:jen ai 
marre}. \qed

\subsection{Proof of Proposition~\ref{prop:condsupp}}

We argue by contradiction. We assume that the function $\omega_\lambda$ does not 
identically vanish on the set $B$ and we derive a contradiction with the 
property that it is a solution of the maximization 
problem~\eqref{def:max-prob-S}. More precisely, our main argument lies in 
constructing a function $\omega_{\lambda, 2} \in X_\lambda$, with 
$$M(\omega_{\lambda, 2}) = L(\omega_{\lambda, 2}) = 1,$$
and such that 
$$E_s(\omega_{\lambda, 2}) > E_s(\omega_\lambda).$$
This construction is 
performed in two steps. The first step provides a function $\omega_{\lambda, 
1}$, which satisfies all the previous criteria except the constraint 
$L(\omega_{\lambda, 2}) = 1$. This constraint is recovered in a second step 
without losing too much energy.

\emph{Step 1.} We assume for the sake of a contradiction that $M(\omega_\lambda, 
B) > 0$. The construction of the function $\omega_{\lambda, 1}$ then relies on 
Lemma~\ref{lem:jen ai marre}, which guarantees that the function 
$\omega_\lambda$ identically vanishes on the subset $S \setminus (\cB( (1, 0), 
\Lambda \varepsilon) \cup B)$. The idea underlying the construction is to 
rearrange the function $\omega_\lambda$ so that the positive mass 
$M(\omega_\lambda, B)$ localized in $B$ is brought back into a small disk nearby 
$\cB( (1, 0),\Lambda \varepsilon)$. 

Observe first that the localized mass $M(\omega_\lambda, B)$ can be estimated as 
\begin{equation}
\label{eq:mass-residual}
M(\omega_\lambda, B) \leq C 
\varepsilon^{\gamma_s},
\end{equation}
by Lemma~\ref{lem:decay}. In particular, 
this quantity vanishes in the limit $\varepsilon \to 0$. Setting 
\begin{equation}
\label{def:rho1} \rho_1 = \sqrt{M(\omega_\lambda, B)} \quad 
\text{and} \quad x_1 = 1 + \Lambda \varepsilon + \rho_1 \varepsilon, 
\end{equation}
we check that the ball $\cB(x_1, \rho_1 \varepsilon)$ lies in 
the subset $S \setminus (\cB( (1, 0), \Lambda \varepsilon) \cup B)$ for 
$\varepsilon$ small enough, and that its intersection with the ball $\cB( (1, 
0), \Lambda \varepsilon)$ reduces to the point $(1 + \Lambda \varepsilon, 0)$.

With these properties at hand, we can define the restriction of the function 
$\omega_{\lambda, 1}$ to the angular sector $\mfS_N$ as
$$\omega_{\lambda, 1} = \omega_\lambda \mathbbm{1}_{\cB( (1, 0), \Lambda \varepsilon)} + \lambda 
\mathbbm{1}_{\cB(x_1, \rho_1 \varepsilon)}.$$
We next extend this definition to 
the whole space using the $N$-fold symmetry. In view of this definition, the 
function $\omega_{\lambda, 1}$ belongs to the function set $X_\lambda$. 
Moreover, it is angular Steiner symmetric due to the angular Steiner symmetry of 
the function $\omega_\lambda$, and it satisfies the condition 
$M(\omega_{\lambda,1}) = M(\omega_\lambda) = 1$.

We now estimate how the energy of the function $\omega_\lambda$ has been 
modified by this construction. Recall first that the function $\omega_{\lambda, 
1}$ is equal to the function $\omega_\lambda$ on the ball $\cB( (1, 0), \Lambda 
\varepsilon)$, so that
\begin{equation}
\label{eq:energy variation 2} I_s \big( 
\omega_{\lambda, 1}, \cB( (1, 0), \Lambda \varepsilon), \cB( (1, 0), \Lambda 
\varepsilon) \big) = I_s \big( \omega_\lambda, \cB( (1, 0), \Lambda 
\varepsilon), \cB( (1, 0), \Lambda \varepsilon) \big).
\end{equation}
We next 
invoke the Riesz rearrangement inequality~\cite{Riesz1} in order to obtain 
\begin{equation}
\label{eq:energy variation 1} I_s \big( \omega_\lambda, S 
\setminus \cB( (1, 0), \Lambda \varepsilon), S \setminus \cB( (1, 0), \Lambda 
\varepsilon) \big) \leq I_s \big( \omega_{\lambda, 1}, S \setminus \cB( (1, 0), 
\Lambda \varepsilon), S \setminus \cB( (1, 0), \Lambda \varepsilon) \big). 
\end{equation}
We also check that
\begin{equation}
\label{eq:large-dist}
\text{dist} \big( \text{supp}(\omega_\lambda 
\mathbbm{1}_S), \text{supp}(\omega_\lambda \mathbbm{1}_{\RR^2 \setminus S}) 
\big) = \sin \Big( \frac{\pi}{N} \Big) \geq \frac{1}{2 N},
\end{equation}
which gives 
\begin{equation}
\label{eq:energy variation 3} \big| I_s(\omega_\lambda, S, 
\RR^2 \setminus S) - I_s(\omega_{\lambda, 1}, S, \RR^2 \setminus S) \big| \leq C 
\varepsilon^{2 (1 - s)} M(\omega_\lambda, B).
\end{equation}

Finally, we turn to the last difference, namely $I_s \big( \omega_{\lambda, 1}, 
\cB( (1, 0), \Lambda \varepsilon), S \setminus \cB( (1, 0),\Lambda \varepsilon) 
\big) - I_s \big( \omega_\lambda, \linebreak[0] \cB( (1, 0), \Lambda 
\varepsilon), S \setminus \cB( (1, 0),\Lambda \varepsilon) \big)$. We observe 
that
$$\text{dist} \big( \cB( (1, 0), \Lambda \varepsilon), B \big) \geq 
\frac{1}{4},$$
when $\varepsilon$ is small enough. Since the function 
$\omega_\lambda$ identically vanishes on $S \setminus (\cB( (1, 0), \Lambda 
\varepsilon) \cup B)$, it follows that
\begin{equation}
\label{eq:energy variation 4}
I_s \big( \omega_\lambda, \cB( (1, 0), \Lambda \varepsilon), S 
\setminus \cB( (1, 0), \Lambda \varepsilon) \big) \leq C \varepsilon^{2 (1 - s)} 
M(\omega_\lambda, B).
\end{equation}
On the other hand, we check that $$\max_{x 
\in \cB(x_1, \rho_1)} \, \max_{y \in \cB( (1, 0), \Lambda \varepsilon)} |x - y| 
= 2 \varepsilon \big( \Lambda + \rho_1),$$
and this inequality gives 
\begin{align*}
I_s \big( \omega_{\lambda, 1}, \cB( (1, 0), \Lambda \varepsilon), 
& S \setminus \cB( (1, 0), \Lambda \varepsilon) \big)\\ & \geq c_s \big( 2 
\Lambda + 2 \rho_1 \big)^{2 (s - 1)} M \big( \omega_{\lambda, 1}, S \setminus 
\cB( (1, 0), \Lambda \varepsilon) \big) \, M \big( \omega_{\lambda, 1}, \cB( (1, 
0), \Lambda \varepsilon) \big).
\end{align*}
In view of the decay estimate in 
Lemma~\ref{lem:decay}, we know that
$$M \big( \omega_{\lambda, 1}, \cB( (1, 0), 
\Lambda \varepsilon) \big) \geq \frac{1}{2},$$
for $\varepsilon$ small enough, 
while $M \big( \omega_{\lambda, 1}, S \setminus \cB( (1, 0), \Lambda \varepsilon) 
\big) = M(\omega_\lambda, B)$. As a consequence, we are led to the lower bound 
\begin{equation}
\label{eq:energy variation 5}
I_s \big( \omega_{\lambda, 1}, 
\cB( (1, 0), \Lambda \varepsilon), S \setminus \cB( (1, 0), \Lambda \varepsilon) 
\big) \geq C M(\omega_\lambda, B),
\end{equation}
and we can collect the energy 
estimates from~\eqref{eq:energy variation 2} to~\eqref{eq:energy variation 5} in 
order to write
\begin{equation}
\label{eq:ne pourrons nous jamais} 
E_s(\omega_{\lambda, 1}) - E_s(\omega_\lambda) = \frac{N}{2 \varepsilon^{2 (1 - 
s)}} \big( I_s(\omega_{\lambda, 1}, S, \RR^2) - I_s(\omega_\lambda, S, \RR^2) 
\big) \geq C \Big( \frac{1}{\varepsilon^{2 (1 - s)}} - 1 \Big) \, 
M(\omega_\lambda, B).
\end{equation}
However, the function $\omega_{\lambda, 1}$ 
may not satisfy the constraint $L(\omega_{\lambda, 1}) = 1$, so that we cannot 
consider it as a test function for the maximization 
problem~\eqref{def:max-prob-S}.

\emph{Step 2.} We recover this property by a suitable translation of the restriction to the set 
$S$ of the function $\omega_{\lambda, 1}$. Consider the second order algebraic equation
\begin{equation} 
\label{eq:r_0}
r_0^2 + \bigg( 2 \int_S x_1 \omega_{\lambda, 1}(x) \, dx \bigg) 
r_0 + \int_S |x|^2 \omega_{\lambda, 1}(x) \, dx - 1 = 0.
\end{equation}
Going back to the definition of the function $\omega_{\lambda, 1}$, we observe that 
the support of its restriction to $S$ is included into the ball $\cB( (1, 0), 
(\Lambda + 2 \rho_1) \varepsilon)$. In view of~\eqref{eq:mass-residual} 
and~\eqref{def:rho1}, we deduce that $$\int_S x_1 \omega_{\lambda, 1}(x) \, dx = 
\big( 1 + \cO(\varepsilon) \big) \int_S \omega_{\lambda, 1} = 1 + 
\cO(\varepsilon) ,$$ in the limit $\varepsilon \to 0$. Similarly, we have 
$$\int_S |x|^2 \omega_{\lambda, 1}(x) \, dx = 1 + \cO(\varepsilon) ,$$ as 
$\varepsilon \to 0$. As a consequence, the discriminant $$\Delta = 4 \Big( 
\int_S x_1 \omega_{\lambda, 1}(x) \, dx \Big)^2 - 4 \int_S |x|^2 
\omega_{\lambda, 1}(x) \, dx + 4$$ of~\eqref{eq:r_0} is positive for 
$\varepsilon$ small enough, and we are authorized to set
\begin{equation} 
\label{def:r_0}
r_0 = - \int_S x_1 \omega_{\lambda, 1}(x) \, dx + \bigg( \Big( 
\int_S x_1 \omega_{\lambda, 1}(x) \, dx \Big)^2 - \int_S |x|^2 \omega_{\lambda, 
1}(x) \, dx + 1 \bigg)^\frac{1}{2}.
\end{equation}
Using the inequality 
$|\sqrt{v} - \sqrt{u}| \leq |v - u|$ for $u \geq 1/4$ and $v \geq 1/4$, we can 
estimate the number $r_0$ by
\begin{equation}
\label{eq:ineq-r_0}
|r_0| \leq \big| L(\omega_{\lambda, 1}) - 1 \big|,
\end{equation}
for $\varepsilon$ small 
enough. Invoking once again the definition of the function $\omega_{\lambda, 
1}$, we can combine the property that $1/2 \leq |x| \leq 2$ for $x \in S$ with 
inequality~\eqref{eq:mass-residual} in order to estimate the quantity 
$L(\omega_{\lambda, 1})$ as
\begin{equation}
\label{eq:sur locean des ages} 
\big| L(\omega_{\lambda, 1}) - 1 \big| = \big| L(\omega_{\lambda, 1}) - 
L(\omega_\lambda) \big| \leq 4 \, M(\omega_\lambda, B) \leq C 
\varepsilon^{\gamma_s}.
\end{equation}
Hence we obtain the estimate 
\begin{equation}
\label{eq:estim-r_0}
|r_0| \leq C \varepsilon^{\gamma_s}. 
\end{equation}

At this stage, we set
\begin{equation}
\label{def:omega-lambda-2}
\forall x \in \mfS_N, \quad \omega_{\lambda, 2}(x) = \omega_{\lambda, 1}\big( x - (r_0, 0) 
\big).
\end{equation}
Since the support of the restriction of $\omega_{\lambda, 
1}$ to $S$ is included into the ball $\cB( (1, 0), (\Lambda + 2 \rho_1) 
\varepsilon)$, estimate~\eqref{eq:estim-r_0} is enough to guarantee that the 
function $\omega_{\lambda, 2}$ has a compact support in $S$ when $\varepsilon$ 
is small enough. In particular, we can extend this function to $\RR^2$ by using the $N$-fold 
symmetry, and this extension amounts to translate the restriction of the function $\omega_{\lambda, 1}$ to each angular sector $R_{(2 n \pi)/N} \mfS_N$ by the vector $y_n = R_{(2 n \pi)/N}(r_0, 0)$. In particular, we can check that the function $\omega_{\lambda, 2}$ belongs to the set $X_\lambda$. By construction, it is also angular Steiner symmetric and it satisfies 
the constraint $M(\omega_{\lambda, 2}) = 1$. On the other hand, we compute 
$$L(\omega_{\lambda, 2}) = \int_S |x + (r_0, 0)|^2 \omega_{\lambda, 1}(x) \, dx 
= r_0^2 + 2 r_0 \int_S x_1 \omega_{\lambda, 1}(x) \, dx + \int_S |x|^2 
\omega_{\lambda, 1}(x) \, dx,$$
using the property that $M(\omega_{\lambda, 1}) = 1$. Since the number $r_0$
is a root of~\eqref{eq:r_0}, we infer that $L(\omega_{\lambda, 2}) = 1$.

We finally estimate the energy of the function $\omega_{\lambda, 2}$. Since this 
function is constructed by translating the function $\omega_{\lambda, 1}$ in 
$S$, we first have
\begin{equation}
\label{eq:first-omega_2}
I_s(\omega_{\lambda, 2}, S, S) = I_s(\omega_{\lambda, 1}, S, S).
\end{equation}
Similarly, we also derive from the definition of the function $\omega_{\lambda, 1}$ that
\begin{align*}
I_s(\omega_{\lambda, 2}, & S, \RR^2 \setminus S) - I_s(\omega_{\lambda, 1}, S, \RR^2 \setminus S) \\
& = \sum_{n = 1}^{N - 1} \int_S \int_{R_\frac{2 n \pi}{N} S} \bigg( \Big( \frac{\varepsilon}{|x - x' + y_0 - y_n|}
\Big)^{2 (1 - s)} - \Big( \frac{\varepsilon}{|x - x'|} \Big)^{2 (1 - s)} \bigg)
\omega_{\lambda, 1}(x') \, dx' \omega_{\lambda, 1}(x) \, dx.
\end{align*}
Coming back to~\eqref{eq:large-dist}, we infer that
$$I_s(\omega_{\lambda, 2}, S, \RR^2 \setminus S) - I_s(\omega_{\lambda, 1}, S, \RR^2
\setminus S) \geq - C \varepsilon^{2 (1 - s)} \sum_{n = 1}^{N - 1} \big| y_0 - y_n \big|.$$
Since $|y_n| = r_0$ for any $0 \leq n \leq N - 1$, we deduce from~\eqref{eq:ineq-r_0}
and~\eqref{eq:sur locean des ages} that
$$I_s(\omega_{\lambda, 2}, S, \RR^2 \setminus S) \geq 
I_s(\omega_{\lambda, 1}, S, \RR^2 \setminus S) - C \varepsilon^{2 (1 - s)} 
M(\omega_\lambda, B).$$
Combining this estimate with~\eqref{eq:ne pourrons nous jamais} and~\eqref{eq:first-omega_2} leads to the inequality
\begin{equation}
\label{eq:jeter lancre un seul jour}
E_s(\omega_{\lambda, 2}) - E_s(\omega_\lambda) = \frac{N}{2 
\varepsilon^{2 (1 - s)}} \big( I_s(\omega_{\lambda, 2}, \RR^2, \RR^2) - 
I_s(\omega_\lambda, \RR^2, \RR^2) \big) \geq C \Big( \frac{1}{\varepsilon^{2 (1 
- s)}} - 1 \Big) M(\omega_\lambda, B).
\end{equation}
This inequality contradicts the maximality of the function $\omega_\lambda$ in the limit 
$\varepsilon \to 0$. This concludes the proof of 
Proposition~\ref{prop:condsupp}. \qed

\appendix
\numberwithin{Theo}{section}
\section{The angular Steiner symmetrization}
\label{sec:angular-Steiner}

Our previous construction of co-rotating vortices with $N$-fold symmetry relies several times on a reduction on each fold to angular Steiner symmetric functions. 
This reduction is made possible by the fact that the restriction to each fold of these co-rotating vortices are indeed angular Steiner symmetric. 
In this appendix, we collect the properties of this symmetrization, which we have used for our construction, and we give their proofs. 

To have this appendix self contained, we recall here after the definitions we need.
Consider a non-negative measurable function $\omega$ defined on the angular sector
$$\mfS_N := \Big\{ x=\big( r \cos(\theta), r \sin(\theta) \big)\in\RR^2 : - \frac{\pi}{N} < \theta < \frac{\pi}{N} \Big\}.$$
Its angular Steiner symmetrization $\omega^\sharp$ is defined as the unique even function for the variable $\theta$ such that
$$\omega^\sharp(r, \theta) > \nu \quad \text{if and only if} \quad |\theta| < \frac{1}{2} \meas \Big\{ \theta' \in \Big( - \frac{\pi}{N}, \frac{\pi}{N} \Big) : \omega(r, \theta') > \nu \Big\},$$
for any positive numbers $r$ and $\nu$, and any angle $- \pi/N < \theta < \pi/N$. This function is well-defined and measurable on $\mfS_N$. Moreover, the function $\omega$ is said to be angular Steiner symmetric if and only if $\omega^\sharp = \omega$ almost everywhere.
In view of its definition, the angular Steiner symmetrization satisfies
$$\meas \Big\{ \theta \in \Big( - \frac{\pi}{N}, \frac{\pi}{N} \Big) : \omega^\sharp(r, \theta) > \nu \Big\} = \meas \Big\{ \theta \in \Big( - \frac{\pi}{N}, \frac{\pi}{N} \Big) : \omega(r, \theta) > \nu \Big\},$$
for any positive numbers $r$ and $\nu$, which means that it is a rearrangement with respect to the angular variable $\theta$. As a consequence of the layer-cake representation of non-negative measurable functions, the angular Steiner symmetrization maps the set
$$L_+^p(\mfS_N) := \big\{ \omega \in L^p(\mfS_N) \text{ s.t. } \omega \geq 0 \text{ a.e.} \big\},$$
into itself. This rearrangement preserves several integral quantities as stated in the next lemma.

\begin{Lemma}
\label{lem:inv-C-L}
Let $\omega \in L_{loc,+}^1(\mfS_N)$ and let $f:\RR_+\to\RR_+$ continuous. We have
$$\int_{\mfS_N}f(r)\,\omega(r,\theta)\,r\,dr\,d\theta=\int_{\mfS_N}f(r)\,\omega^\sharp(r,\theta)\,r\,dr\,d\theta,
$$
provided that these quantities are finite.
\end{Lemma}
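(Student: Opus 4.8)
The plan is to reduce everything, for each fixed radius $r$, to the elementary fact that equimeasurable functions have the same integral, and then to integrate the resulting pointwise-in-$r$ identity against the non-negative weight $f(r)\,r$.

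First I would fix $r > 0$ and compare the two one-dimensional functions $\theta \mapsto \omega(r, \theta)$ and $\theta \mapsto \omega^\sharp(r, \theta)$ on the interval $(-\pi/N, \pi/N)$. By the very definition of the angular Steiner symmetrization recalled above, these two functions are equimeasurable: for every $\nu > 0$,
$$\meas \Big\{ \theta \in \Big( -\frac{\pi}{N}, \frac{\pi}{N} \Big) : \omega(r, \theta) > \nu \Big\} = \meas \Big\{ \theta \in \Big( -\frac{\pi}{N}, \frac{\pi}{N} \Big) : \omega^\sharp(r, \theta) > \nu \Big\}.$$
Invoking the layer-cake (Cavalieri) representation of the non-negative measurable functions $\omega(r, \cdot)$ and $\omega^\sharp(r, \cdot)$, namely
$$\int_{-\pi/N}^{\pi/N} \omega(r, \theta) \, d\theta = \int_0^{+\infty} \meas \Big\{ \theta : \omega(r, \theta) > \nu \Big\} \, d\nu,$$
and likewise for $\omega^\sharp$, the equimeasurability immediately yields
$$\int_{-\pi/N}^{\pi/N} \omega(r, \theta) \, d\theta = \int_{-\pi/N}^{\pi/N} \omega^\sharp(r, \theta) \, d\theta,$$
for every $r > 0$, understood as an identity in $[0, +\infty]$.

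Next I would multiply this pointwise-in-$r$ identity by the non-negative weight $f(r)\, r$, which does not depend on the angular variable and therefore factors out of the inner integral. Integrating in $r$ over $\RR_+$ and applying the Tonelli theorem — legitimate since $f$, $\omega$ and $\omega^\sharp$ are all non-negative, so no integrability hypothesis beyond measurability is needed to justify the interchange and we may work throughout in $[0, +\infty]$ — I would recover
$$\int_{\mfS_N} f(r)\, \omega(r, \theta)\, r \, dr \, d\theta = \int_0^{+\infty} f(r)\, r \left( \int_{-\pi/N}^{\pi/N} \omega(r, \theta) \, d\theta \right) dr = \int_{\mfS_N} f(r)\, \omega^\sharp(r, \theta)\, r \, dr \, d\theta,$$
which is the claimed identity.

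The argument is essentially routine, so there is no serious obstacle; the only points requiring care are measurability and the use of Tonelli. The measurability of $\omega^\sharp$ on $\mfS_N$, and hence of the distribution function $(r, \nu) \mapsto \meas\{\theta : \omega(r, \theta) > \nu\}$, has already been asserted in the construction of the symmetrization above, so the layer-cake identities and the interchange of integrals are all justified for non-negative integrands. The finiteness caveat in the statement is then precisely what guarantees that the common value of the two sides is a finite real number rather than $+\infty$, and the equality holds in either case.
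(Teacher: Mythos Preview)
Your proof is correct and follows essentially the same approach as the paper: both rely on the layer-cake representation, the equimeasurability built into the definition of $\omega^\sharp$, and Fubini/Tonelli to interchange integrals. The paper's proof is simply the one-line version of what you spell out in detail.
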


\begin{proof}
This is a direct consequence of the layer-cake formula
$$\omega(r, \theta) = \int_0^{+ \infty} \mathbbm{1}_{\{ \omega(r,\cdot) > \nu \}}(\theta) \, d\nu,$$
the Fubini theorem and the definition of the angular Steiner symmetrization.
\end{proof}

In contrast, the energy $E$ is not conserved by the angular Steiner symmetrization, but it is increased by this transformation. The proof of this claim relies on the Riesz rearrangement inequality for the angular Steiner symmetrization.

\begin{Lemma}
\label{lem:symmetry}
Let $\omega \in L_+^{\infty}(S)$. We have
$$E(\omega) \leq E(\omega^\sharp).$$
\end{Lemma}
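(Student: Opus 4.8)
The plan is to reduce the claim to the one–dimensional Riesz rearrangement inequality, applied at each fixed pair of radii. Using the polar representation~\eqref{eq:polar-Es} of the energy, I would first write
$$E(\omega)=\frac{N}{2}\int_{1/2}^{2}\int_{1/2}^{2}\Bigl(\int\!\!\int \kappa_s(r,r',\theta-\theta')\,\omega(r,\theta)\,\omega(r',\theta')\,d\theta\,d\theta'\Bigr)\,r\,r'\,dr\,dr',$$
where the inner integrals run over the angular slice $(-\pi/2N,\pi/2N)$. Since the radial weights $r\,r'$ are nonnegative and constant for fixed $(r,r')$, it suffices to prove, for almost every pair $(r,r')\in[1/2,2]^2$, the slice-wise inequality
$$\int\!\!\int \kappa_s(r,r',\theta-\theta')\,\omega(r,\theta)\,\omega(r',\theta')\,d\theta\,d\theta' \le \int\!\!\int \kappa_s(r,r',\theta-\theta')\,\omega^\sharp(r,\theta)\,\omega^\sharp(r',\theta')\,d\theta\,d\theta',$$
after which one multiplies by $r\,r'$, integrates in $(r,r')$, and multiplies by $N/2$.

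To set up this slice-wise inequality I would record two facts. First, for each fixed $(r,r')$ the function $\xi\mapsto\kappa_s(r,r',\xi)$ defined in~\eqref{def:kappa-s} is even and $2\pi/N$-periodic, and it is non-increasing on $(0,\pi/N)$ by Lemma~\ref{lem:combinatorics}; consequently its restriction to $(-\pi/N,\pi/N)$ is symmetric and radially non-increasing. Second, for each fixed $r$ the angular slice $\omega(r,\cdot)$ is, by definition, supported in $(-\pi/2N,\pi/2N)$, and its angular Steiner symmetrization $\omega^\sharp(r,\cdot)$ is precisely the symmetric decreasing rearrangement of $\omega(r,\cdot)$ regarded as a function on $\RR$, extended by zero. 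The same holds at radius $r'$.

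I would then invoke the Riesz rearrangement inequality in one dimension for the triple product $\omega(r,\theta)\,\kappa_s(r,r',\theta-\theta')\,\omega(r',\theta')$. Because the factors $\omega(r,\cdot)$ and $\omega(r',\cdot)$ are supported in $(-\pi/2N,\pi/2N)$, only the values of $\kappa_s(r,r',\cdot)$ for $|\xi|<\pi/N$ enter the integral; on that interval the kernel already coincides with a symmetric decreasing function, so replacing $\omega(r,\cdot)$ and $\omega(r',\cdot)$ by their rearrangements $\omega^\sharp(r,\cdot)$ and $\omega^\sharp(r',\cdot)$ can only increase the integral. To apply the classical statement verbatim one may first replace $\kappa_s(r,r',\cdot)$ by the even function equal to it on $[-\pi/N,\pi/N]$ and constant outside; this alters neither side of the slice inequality, because of the support constraint, while rendering the kernel globally symmetric decreasing. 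This yields the slice-wise inequality above, and integrating as described gives $E(\omega)\le E(\omega^\sharp)$.

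The only substantial point is the monotonicity of the full kernel on $(0,\pi/N)$. While the summand $n=0$ in~\eqref{def:kappa-s} is manifestly symmetric decreasing in $\xi$, each summand with $n\ge1$ attains its maximum at $\xi=2n\pi/N$, away from the origin, so the monotonicity of the sum is a genuinely combinatorial fact about the superposition of these shifted Riesz kernels. This is exactly the content imported from Lemma~\ref{lem:combinatorics}; granted it, the remainder is the standard slice-wise Riesz rearrangement argument.
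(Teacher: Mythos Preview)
Your proposal is correct and follows essentially the same route as the paper: write $E$ in polar form, freeze $(r,r')$, use Lemma~\ref{lem:combinatorics} to see that $\xi\mapsto\kappa_s(r,r',\xi)$ is even and non-increasing on $(0,\pi/N)$, extend the angular slices of $\omega$ by zero to all of $\RR$, and apply the one-dimensional Riesz rearrangement inequality slice-wise before integrating in $(r,r')$. The only cosmetic difference is your extension of the kernel by a constant outside $[-\pi/N,\pi/N]$ whereas the paper extends by zero; the paper's choice is slightly cleaner since it keeps the kernel vanishing at infinity as required by the standard Riesz statement, but your version works as well once one notes that the constant part contributes equally to both sides by equimeasurability.
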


\begin{proof}
Recall first that the quantity $E(\omega)$ is defined by
\begin{equation}
\label{eq:E-N-fold}
E(\omega) = \int_{\frac{1}{2}}^{2} r\, dr \int_{\frac{1}{2}}^{2} r' \, dr' \bigg( \int_{-\frac{\pi}{2 N}}^\frac{\pi}{2 N} \int_{-\frac{\pi}{2 N}}^\frac{\pi}{2 N} \kappa(r, r', \theta - \theta') \, \omega(r, \theta) \, \omega(r', \theta') \, d\theta \, d\theta' \bigg).
\end{equation}
In this expression, the kernel $\kappa$ is equal to
$$\kappa(r, r',\xi) = \sum_{n = 0}^{N - 1} \frac{N c_s}{\big( r^2 + (r')^2 - 2 r r' \cos \big( \xi - \frac{2 \pi n}{N} \big) \big)^{1 - s}},$$
for $\frac{1}{2} < r \neq r' < 2$ and $- \pi/N < \xi < \pi/N$. This function is non-negative and even with respect to the variable $\xi$ due to the identity
$$\cos \Big( - \xi - \frac{2 \pi n}{N} \Big) = \cos \Big( \xi - \frac{2 \pi (N - n)}{N} \Big),$$
which holds for $- \pi/N < \xi < \pi/N$ and $1 \leq n \leq N - 1$.

Moreover, given two fixed numbers $\frac{1}{2} < r \neq r' < 2$, the map $\xi \mapsto \kappa(r, r', \xi)$ is smooth on $(- \pi/N, \pi/N)$, and its derivative is given by
$$\frac{\partial \kappa}{\partial \xi}(r, r', \xi) = - 2 c_s (1 - s) r r' \sum_{n = 0}^{N - 1} \frac{\sin \big( \xi - \frac{2 \pi n}{N} \big)}{\big( r^2 + (r')^2 - 2 r r' \cos \big( \xi - \frac{2 \pi n}{N} \big) \big)^{2 - s}}.$$
We claim that this quantity is negative on $(0, \pi/N)$, while it is positive on $(- \pi/N, 0)$. More precisely, we have

\begin{Lemma}
\label{lem:combinatorics}
Let $N \geq 2$, $0 \leq s \leq 1$ and $0 < \varrho < 1$. Consider the function $\phi : [- 1, 1] \to \RR$ defined by
\begin{equation}
\label{def:phi}
\phi(x) := \frac{1}{(1 - \varrho x)^{2 - s}},
\end{equation}
for any number $- 1 \leq x \leq 1$, and set
$$\Phi(\xi) = \sum_{n = 0}^{N - 1} \sin \Big( \xi - \frac{2 \pi n}{N} \Big) \phi \Big( \cos \Big( \xi - \frac{2 \pi n}{N} \Big) \Big),$$
for any number $\xi \in \RR$. There exists a continuous, positive and $2 \pi/N$-periodic function $F : \RR \to \RR$ such that
\begin{equation}
\label{decromieres}
\Phi(\xi) = \sin(N \xi) \, F(\xi),
\end{equation}
for any number $\xi \in \RR$.
\end{Lemma}

Our previous claim directly follows from applying formula~\eqref{decromieres} with $\varrho = 2 r r'/(r^2 + (r')^2) \in (0, 1)$. For sake of clarity, we postpone the proof of Lemma~\ref{lem:combinatorics} and now complete the proof of Lemma~\ref{lem:symmetry}.

Coming back to~\eqref{eq:E-N-fold}, we fix two positive numbers $r \neq r'$ and extend the functions $\theta \mapsto \omega(r, \theta)$ and $\theta \mapsto \omega(r', \theta)$ to $\RR$ by letting them equal to $0$ outside the interval $(- \pi/2N, \pi/2N)$. Similarly, we extend the map $\theta \mapsto \kappa(r, r', \theta)$ to $\RR$ by letting it equal to $0$ outside the interval $(- \pi/N, \pi/N)$. We then have
$$\int_{-\frac{\pi}{2 N}}^\frac{\pi}{2 N} \int_{-\frac{\pi}{2 N}}^\frac{\pi}{2 N} \kappa(r, r', \theta - \theta') \, \omega(r, \theta) \, \omega(r', \theta') \, d\theta \, d\theta' = \int_\RR \int_\RR \kappa(r, r', \theta - \theta') \, \omega(r, \theta) \, \omega(r', \theta') \, d\theta \, d\theta',$$
and we can apply the Riesz rearrangement inequality (see e.g.~\cite[Lemma 3.6]{LiebLos0}) in order to obtain
$$\int_{-\frac{\pi}{2 N}}^\frac{\pi}{2 N} \int_{-\frac{\pi}{2 N}}^\frac{\pi}{2 N} \kappa(r, r', \theta - \theta') \, \omega(r, \theta) \, \omega(r', \theta') \, d\theta \, d\theta' \leq \int_\RR \int_\RR \kappa^\sharp(r, r', \theta - \theta') \, \omega^\sharp(r, \theta) \, \omega^\sharp(r', \theta') \, d\theta \, d\theta'.$$
Since the map $\xi \mapsto \kappa(r, r', \xi)$ is even and non-increasing on $\RR_+$, we have
$$\kappa^\sharp(r, r', \theta - \theta') = \kappa(r, r', \theta - \theta'),$$
for any $(\theta, \theta') \in \RR^2$. Moreover, since the functions $\theta \mapsto \omega(r, \theta)$ and $\theta \mapsto \omega(r', \theta)$ are supported in $(- \pi/2N, \pi/2N)$, so are the functions $\theta \mapsto \omega^\sharp(r, \theta)$ and $\theta \mapsto \omega^\sharp(r', \theta)$, and this property leads to
$$\int_{-\frac{\pi}{2 N}}^\frac{\pi}{2 N} \int_{-\frac{\pi}{2 N}}^\frac{\pi}{2 N} \kappa(r, r', \theta - \theta') \, \omega(r, \theta) \, \omega(r', \theta') \, d\theta \, d\theta' \leq \int_{-\frac{\pi}{2 N}}^\frac{\pi}{2 N} \int_{-\frac{\pi}{2 N}}^\frac{\pi}{2 N} \kappa(r, r', \theta - \theta') \, \omega^\sharp(r, \theta) \, \omega^\sharp(r', \theta') \, d\theta \, d\theta'.$$
Lemma~\ref{lem:symmetry} finally follows from introducing this inequality into~\eqref{eq:E-N-fold}.
\end{proof}

We now provide the proof of Lemma~\ref{lem:combinatorics}.

\begin{proof}[Proof of Lemma~\ref{lem:combinatorics}]
For sake of simplicity, we set
$$\xi_n = \xi - \frac{2 \pi n}{N},$$
for any number $\xi \in \RR$ and any integer $0 \leq n \leq N - 1$. With this notation at hand, the function $\Phi$ rewrites as
$$\Phi(\xi) = \sum_{n = 0}^{N - 1} \sin(\xi_n) \phi \big( \cos(\xi_n) \big),$$
and we can express it as in~\eqref{decromieres} by developing the following inductive argument.

The first step, as the subsequent ones, relies on a Taylor expansion of the function $\phi$. We define the maps
$$T_p[f](x) := \int_0^1 f^{(p)}(s x) (1 - s)^{p - 1} \, ds$$
for any integer $p \geq 1$, any number $-1 \leq x \leq 1$ and any function $f \in \cC^\infty([- 1, 1])$. These maps allow to write the remainder term in the Taylor formula as
$$f(x) = \sum_{k = 0}^{p - 1} \frac{f^{(k)}(0)}{k !} x^k + \frac{x^p}{(p - 1)!} T_p[f](x).$$
Since the function $\phi$ is smooth on $\RR$, we infer from this formula that
\begin{equation}
\label{ASM}
\begin{split}
\Phi(\xi) = \sum_{k = 0}^{N - 2} \frac{\phi^{(k)}(0)}{k !} & \sum_{n = 0}^{N - 1} \sin(\xi_n) \cos(\xi_n)^k\\
& + \frac{1}{(N - 2)!} \sum_{n = 0}^{N - 1} \sin(\xi_n) \cos(\xi_n)^{N - 1} T_{N - 1}[\phi](\cos(\xi_n)).
\end{split}
\end{equation}

We next claim that the trigonometric sums $\sum_{n = 0}^{N - 1} \sin(\xi_n) \cos(\xi_n)^k$ are equal to $0$ when $0 \leq k \leq N - 2$. For further use, we more generally set
\begin{equation}
\label{def:skl}
\sigma_{k, \ell}(\xi) := \sum_{n = 0}^{N - 1} \sin(\ell \xi_n) \cos(\xi_n)^k,
\end{equation}
for any integer $(k, \ell) \in \NN^2$, and we check that
\begin{equation}
\label{tadjer}
\sigma_{k, \ell}(\xi) = 0,
\end{equation}
provided that $0 \leq k + \ell \leq N - 1$. Indeed, the Euler formula for the cosine function and the binomial theorem give
$$\sigma_{k, \ell}(\xi) = \frac{1}{2^k} \sum_{n = 0}^{N - 1} \sin(\ell \xi_n) \sum _{j = 0}^k \binom{k}{j} e^{i (k - 2 j) \xi_n}.$$
Since the sum $\sigma_{k, \ell}(\xi)$ is real-valued, this expression reduces to
$$\sigma_{k, \ell}(\xi) = \frac{1}{2^k} \sum _{j = 0}^k \binom{k}{j} \sum_{n = 0}^{N - 1} \sin(\ell \xi_n) \cos \big( (k - 2 j) \xi_n \big),$$
so that
\begin{equation}
\label{falgoux}
\sigma_{k, \ell}(\xi) = \frac{1}{2^{k + 1}} \sum _{j = 0}^k \binom{k}{j} \sum_{n = 0}^{N - 1} \Big( \sin \big( (k + \ell - 2 j) \xi_n \big) + \sin \big( (2 j + \ell - k) \xi_n \big) \Big).
\end{equation}
We rewrite this formula as
\begin{equation}
\label{uhila}
\sigma_{k, \ell}(\xi) = \frac{1}{2^{k + 1}} \sum _{j = 0}^k \binom{k}{j} \Im \bigg( e^{i (k + \ell - 2 j) \xi} \sum_{n = 0}^{N - 1} \Big( e^\frac{2 \pi i (2 j - k - \ell)}{N} \Big)^n+ e^{i (2 j + \ell - k) \xi} \sum_{n = 0}^{N - 1} \Big( e^\frac{2 \pi i (k - \ell - 2 j)}{N} \Big)^n \bigg).
\end{equation}
Recall that
$$\sum_{n = 0}^{N - 1} \Big( e^\frac{2 p \pi i}{N} \Big)^n = \begin{cases} N & \text{if } p \text{ is a multiple of }N,\\ 0 & \text{elsewhere}, \end{cases}$$
and observe that the integers $2 j - \ell - k$ and $k - \ell - 2 j$ lie in the interval $[- (N - 1), N - 1]$ due to the constraints $0 \leq k + \ell \leq N - 1$ and $0 \leq j \leq k$. As a consequence, the sums with respect to $n$ in~\eqref{uhila} are equal to $0$ except if $2 j - \ell -k = 0$ for the first sum, respectively $k - \ell- 2 j = 0$ for the second one. In these two cases, the quantity in the imaginary part is real-valued, so that the quantity $\sigma_{k, \ell}(\xi)$ is indeed equal to $0$.

Invoking~\eqref{tadjer} with $0 \leq k \leq N - 2$ and $\ell = 1$, we can simplify~\eqref{ASM} as
\begin{equation}
\label{azema}
\Phi(\xi) = \frac{1}{(N - 2)!} \sum_{n = 0}^{N - 1} \sin(\xi_n) \cos(\xi_n)^{N - 1} T_{N - 1}[\phi](\cos(\xi_n)),
\end{equation}
and put the focus on the trigonometric polynomial $\xi \mapsto \sin(\xi) \cos(\xi)^{N - 1}$. As before, we more generally consider the functions $\xi \mapsto \sin(\ell \xi) \cos(\xi)^{N - \ell}$ for an integer $1 \leq \ell \leq N - 1$. Arguing as for the proof of~\eqref{falgoux} and using the identity $\sin(N \xi_n) = \sin(N \xi)$ for $0 \leq n \leq N - 1$, we observe that
\begin{equation}
\label{slimani}
\sin(\ell \xi_n) \cos(\xi_n)^{N - \ell} = \frac{1}{2^{N - \ell}} \bigg( \sin(N \xi) + \sum_{k = 1}^{N - \ell} \binom{N - \ell}{k} \sin \big( (N - 2 k) \xi_n \big) \bigg).
\end{equation}
In this expression, the quantities $\sin \big( (N - 2 k) \xi_n \big)$ can appear only once, with a positive multiplicative factor, or twice, with opposite multiplicative factors. For further use, we aim at eliminating this late behavior. For $2 \ell \geq N$, this behavior is excluded, so that we now assume that $2 \ell < N$. In this case, we use the change of indices $j = N - k$ for $N/2 < k \leq N - \ell$ in order to decompose the sum over $k$ in~\eqref{slimani} as
\begin{equation}
\label{zirakashvili}
\begin{split}
\sum_{k = 1}^{N - \ell} \binom{N - \ell}{k} \sin \big( (N - 2 k) \xi_n \big) = \sum_{1 \leq k < \ell} & \binom{N - \ell}{k} \sin \big( (N - 2 k) \xi_n \big)\\
& + \sum_{\ell \leq k < \frac{N}{2}} \Big( \binom{N - \ell}{k} - \binom{N - \ell}{N - k} \Big) \sin \big( (N - 2 k) \xi_n \big).
\end{split}
\end{equation}
Note here that
\begin{equation}
\label{beheregaray}
\binom{N - \ell}{k} - \binom{N - \ell}{N - k} = \frac{(N - \ell)!}{k! (N - k)!} \Big( \prod_{j = 0}^{\ell - 1} (N - k - j) - \prod_{j = 0}^{\ell - 1} (k - j) \Big) > 0,
\end{equation}
since $N - k > k$ when $\ell \leq k < N/2$. Hence, rewriting~\eqref{slimani} as in~\eqref{zirakashvili} provides an expression in which each quantity $\sin \big( (N - 2 k) \xi_n \big)$ appears only once, with a positive multiplicative factor.

Going back to~\eqref{azema}, we can combine~\eqref{slimani} and~\eqref{zirakashvili} for $\ell = 1$ in order to obtain 
\begin{equation}
\label{goutta}
\begin{split}
\Phi(\xi) = & \frac{\sin(N \xi)}{ 2^{N - 1} (N - 2)!} \sum_{n = 0}^{N - 1} T_{N - 1}[\phi](\cos(\xi_n))\\
& + \frac{1}{2^{N - 1} (N - 2)!} \sum _{1 \leq k < \frac{N}{2}} \bigg( \binom{N - 1}{k} - \binom{N - 1}{N - k} \bigg) \sum_{n = 0}^{N - 1} \sin \big( (N - 2k) \xi_n \big) T_{N - 1}[\phi](\cos(\xi_n)).
\end{split}
\end{equation}
At this stage, we set
\begin{equation}
\label{def:F1}
F^1(\xi) := \frac{1}{ 2^{N - 1} (N - 2)!} \sum_{n = 0}^{N - 1} T_{N - 1}[\phi](\cos(\xi_n)),
\end{equation}
and
\begin{equation}
\label{def:R1}
R_{N - 2 k}^1(x) = \frac{1}{2^{N - 1} (N - 2)!} \bigg( \binom{N - 1}{k} - \binom{N - 1}{N - k} \bigg) T_{N - 1}[\phi](x),
\end{equation}
for $1 \leq k < N/2$ and $- 1 \leq x \leq 1$. With this notation at hand, we can write~\eqref{goutta} as
\begin{equation}
\label{bes}
\Phi(\xi) = \sin(N \xi) F^1(\xi) + \sum _{1 \leq k < \frac{N}{2}} \sum_{n = 0}^{N - 1} \sin \big( (N - 2 k) \xi_n \big) R_{N - 2 k}^1(\cos(\xi_n)),
\end{equation}
and we observe that the sums
$$\sum_{n = 0}^{N - 1} \sin \big( (N - 2k) \xi_n \big) R_{N - 2 k}^1(\cos(\xi_n))$$
have exactly the same form as the map $\Phi$ with the function $\phi$ being replaced by the functions $R_{N - 2 k}^1$. As a consequence, we can expect that an inductive argument eventually provides an expression of the map $\Phi$ as in~\eqref{decromieres}.

Before going into this inductive argument, we now check that the function $F^1$ is positive, while the remainder terms $R_{N - 2 k}^1$ are positive and absolutely monotone, which means that all their derivatives are positive. This claim originates into the two following properties.

First, in view of~\eqref{def:phi}, the function $\phi$ is well-defined and smooth on $[- 1, 1] \subset (- 1/\varrho, 1/\varrho)$, and its successive derivatives are given by
$$\phi^{(p)}(x) = \frac{(2 - s) (3 - s) \ldots (p + 1 - s) \varrho^p}{(1 - \varrho x)^{p + 2 - s}} >0 ,$$
for any $p \geq 1$ and any $x \in [- 1, 1]$. Therefore, the function $\phi$ is absolutely monotone. Actually, it extends to an analytic function on the interval $(- 1/\varrho, 1/\varrho)$ since it can be expanded as the power series
Let $R > 0$ We next assume for the sake of a contradiction the existence of a point $x \in \cB( (1,0), 2 R \varepsilon)^c$ such that $\psi_\lambda(x) \geq 0$. When $\varepsilon$ is small enough, we can invoke Lemma~\ref{lem:decay} in order to find $y_1 \in S$ such that $\psi_\lambda(y_1) < 0$ and the two following conditions are satisfied
\begin{equation}
\label{jedrasiak}
\phi(x) = \sum_{p = 0}^{+ \infty} \alpha_{p, s} \rho^p x^p,
\end{equation}
for $ x \in (- 1/\varrho, 1/\varrho)$. In this formula, the coefficients $\alpha_{p, s}$ are given by
\begin{equation}
\label{timani}
\alpha_{0, s} := 1 \quad \text{and} \quad \alpha_{p, s} := \frac{(2 - s)(3 - s) \ldots (p + 1 - s)}{p !} \text{ for } p \geq 1,
\end{equation}
and all of them are positive.

Second, we remark that, if a function $f \in \cC^\infty([- 1, 1], \RR)$ is positive and absolutely monotone, so are the maps $T_p[f]$ for any $p \geq 1$. This is a direct consequence of the computation
$$T_p[f]^{(q)}(x) = \int_0^1 f^{(p + q)}(s x) s^q (1 - s)^{p - 1} \, ds > 0.$$
Hence, we can invoke~\eqref{beheregaray},~\eqref{def:F1} and~\eqref{def:R1} to conclude that the function $F^1$ is positive, while the remainder terms $R_{N - 2 k}^1$ are positive and absolutely monotone.

We now go inside the inductive argument. Given an integer $p \geq 1$, we assume that we have constructed positive and smooth functions $(F^q)_{1 \leq q \leq p}$ on $\RR$, and positive, smooth and absolutely monotone functions $(R_{N - 2 k}^q)_{1 \leq q \leq p, 1 \leq k < N/2}$ on $[- 1, 1]$, such that
\begin{equation}
\label{sadourny}
\Phi(\xi) = \sin(N \xi) \sum_{q = 1}^p F^q(\xi) + \sum _{1 \leq k < \frac{N}{2}} \sum_{n = 0}^{N - 1} \sin \big( (N - 2k) \xi_n \big) R_{N - 2 k}^p(\cos(\xi_n)),
\end{equation}
for any $\xi \in \RR$. In order to construct the function $F^{p + 1}$ and the remainders $R_{N - 2 k}^{p + 1}$, we apply the same strategy as in the first inductive argument above. We fix an integer $1 \leq k < N/2$ and we invoke the Taylor formula so as to obtain
\begin{align*}
\sum_{n = 0}^{N - 1} \sin \big( (N - 2 k) \xi_n \big) & R_{N - 2 k}^p(\cos(\xi_n)) = \sum_{j = 0}^{2 k - 1} \frac{\big( R_{N - 2 k}^p \big)^{(j)}(0)}{j !} \sum_{n = 0}^{N - 1} \sin \big( (N - 2k) \xi_n \big) \cos(\xi_n)^j\\
& + \frac{1}{(2 k - 1)!} \sum_{n = 0}^{N - 1} \sin \big( (N - 2k) \xi_n \big) \cos(\xi_n)^{2 k} T_{2 k}[R_{N - 2 k}^p](\cos(\xi_n)).
\end{align*}
In view of~\eqref{tadjer}, the double sum in the right-hand side of this identity is equal to $0$, so that
\begin{align*}
\sum_{n = 0}^{N - 1} \sin \big( (N - 2 k) \xi_n \big) & R_{N - 2 k}^p(\cos(\xi_n)) =\\
& \frac{1}{(2 k - 1)!} \sum_{n = 0}^{N - 1} \sin \big( (N - 2k) \xi_n \big) \cos(\xi_n)^{2 k} T_{2 k}[R_{N - 2 k}^p](\cos(\xi_n)).
\end{align*}
When $k \leq N/4$, we next use~\eqref{slimani} in order to get
\begin{equation}
\label{iturria}
\begin{split}
\sum_{n = 0}^{N - 1} \sin \big( (N - 2 k) \xi_n \big) & R_{N - 2 k}^p(\cos(\xi_n)) = \frac{\sin(N \xi)}{4^k (2 k - 1)!} \sum_{n = 0}^{N - 1} T_{2 k}[R_{N - 2 k}^p](\cos(\xi_n))\\
& + \frac{1}{4^k (2 k - 1)!} \sum_{j = 1}^{2 k} \binom{2 k}{j} \sum_{n = 0}^{N - 1} \sin \big( (N - 2 j) \xi_n \big) T_{2 k}[R_{N - 2 k}^p](\cos(\xi_n)).
\end{split}
\end{equation}
For $k > N/4$, we similarly invoke~\eqref{zirakashvili} so as to obtain
\begin{align*}
\sum_{n = 0}^{N - 1} \sin & \big( (N - 2 k) \xi_n \big) R_{N - 2 k}^p(\cos(\xi_n)) = \frac{\sin(N \xi)}{4^k (2 k - 1)!} \sum_{n = 0}^{N - 1} T_{2 k}[R_{N - 2 k}^p](\cos(\xi_n))\\
& + \frac{1}{4^k (2 k - 1)!} \sum_{1 \leq j < N - 2 k} \binom{2 k}{j} \sum_{n = 0}^{N - 1} \sin \big( (N - 2 j) \xi_n \big) T_{2 k}[R_{N - 2 k}^p](\cos(\xi_n))\\
& + \frac{1}{4^k (2 k - 1)!} \sum_{N - 2 k \leq j < \frac{N}{2}} \bigg( \binom{2 k}{j} - \binom{2 k}{N - j} \bigg) \sum_{n = 0}^{N - 1} \sin \big( (N - 2 j) \xi_n \big) T_{2 k}[R_{N - 2 k}^p](\cos(\xi_n)).
\end{align*}
Setting
\begin{equation}
\label{def:Fp+1}
F^{p + 1}(\xi) := \sum_{1 \leq k < \frac{N}{2}} \frac{1}{4^k (2 k - 1)!} \sum_{n = 0}^{N - 1} T_{2 k}[R_{N - 2 k}^p](\cos(\xi_n)),
\end{equation}
as well as
\begin{equation}
\label{def:Rp+1}
\begin{split}
R_{N - 2 k}^{p + 1}(x) = \sum_{\max \big\{ 1, \frac{k}{2} \big\} \leq j < \frac{N - k}{2}} \binom{2 j}{k} & \frac{ T_{2 j}[R_{N - 2 j}^p](x)}{4^j (2 j - 1)!}\\
& + \sum_{\frac{N - k}{2} \leq j < \frac{N}{2}} \bigg( \binom{2 j}{k} - \binom{2 j}{N - k} \bigg) \frac{ T_{2 j}[R_{N - 2 j}^p](x)}{4^j (2 j - 1)!},
\end{split}
\end{equation}
we conclude that
$$\Phi(\xi) = \sin(N \xi) \sum_{q = 1}^{p + 1} F^q(\xi) + \sum _{1 \leq k < \frac{N}{2}} \sum_{n = 0}^{N - 1} \sin \big( (N - 2k) \xi_n \big) R_{N - 2 k}^{p + 1} (\cos(\xi_n)).$$
Moreover, we can prove as for the functions $F^1$ and $R_{N - 2 k}^1$ that the map $F^{p + 1}$ is positive and smooth on $\RR$, while the maps $R_{N - 2 k}^{p + 1}$ are positive, smooth and absolutely monotone on $[-1, 1]$. This completes the inductive argument and establishes the validity of~\eqref{sadourny} for any $p \geq 1$.

We finally conclude the proof of Lemma~\ref{lem:combinatorics} by showing that
\begin{equation}
\label{vahaamahina}
\sum_{1 \leq k < N/2} \big| R_{N - 2 k}^p(x) \big| \to 0,
\end{equation}
as $p \to + \infty$, uniformly with respect to $x \in [- 1, 1]$. In order to establish this convergence, we use the analytic expansion of the function $\phi$ in~\eqref{jedrasiak}. More generally, we consider an analytic function $f$ on the interval $(- 1/\varrho, 1/\varrho)$, which we expand as the power series
$$f(x) = \sum_{k = 0}^{+ \infty} a_k x^k,$$
for $ x \in (- 1/\varrho, 1/\varrho)$. Given an integer $m \geq 1$ and a number $x \in [- 1, 1]$, we compute
$$T_m[f](x) = \sum_{k = 0}^{+ \infty} a_{k + m} (k + m) \ldots (k + 1) x^k \int_0^1 s^k (1 - s)^{m - 1} \, ds.$$
Integrating by parts and arguing by induction provide
$$\int_0^1 s^k (1 - s)^{m - 1} \, ds = \frac{m - 1}{k + 1} \int_0^1 s^{k + 1} (1 - s)^{m - 2} \, ds = \frac{(m - 1)!}{(k + m) \ldots (k + 1)}.$$
Hence we obtain
$$T_m[f](x) = (m - 1)! \, \tau_m f(x),$$
where the notation $\tau_m f$ refers to the analytic function $f$ given by the power series
$$\tau_m f(x) = \sum_{k = 0}^{+ \infty} a_{k + m} x^k,$$
for any number $x \in (- 1/\varrho, 1/\varrho)$.

With this expression at hand, we rewrite~\eqref{def:Rp+1} as
$$R_{N - 2 k}^{p + 1}(x) = \sum_{\frac{k}{2} \leq j < \frac{N}{2}} b_k^{2 j} \, \tau_{2 j} R_{N - 2 j}^p(x),$$
with
$$b_k^j = \begin{cases} \binom{j}{k} \frac{1}{2^j} & \text{if } k \leq j < N - k,\\[5pt] \Big( \binom{j}{k} - \binom{j}{N - k} \Big) \frac{1}{2^j} & \text{if } N - k \leq j < N. \end{cases}$$
Since
$$R_{N - 2 k}^1(x) = b_k^{N - 1} \, \tau_{N - 1} \phi(x),$$
by~\eqref{def:R1}, a direct inductive argument provides
\begin{equation}
\label{lapandry}
\begin{split}
R_{N - 2 k}^{p + 1}(x) = \sum_{\max \big\{ 1, \frac{k}{2} \big\} \leq j_1 < \frac{N}{2}} b_k^{2 j_1} & \sum_{\max \big\{ 1, \frac{j_1}{2} \big\} \leq j_2 < \frac{N}{2}} b_{j_1}^{2 j_2} \ldots\\
& \ldots \sum_{\max \big\{ 1, \frac{j_{p - 1}}{2} \big\} \leq j_p < \frac{N}{2}} \, b_{j_{p - 1}}^{2 j_p}\, b_{j_p}^{N - 1} \tau_{2 j_1 + 2 j_2 \ldots 2 j_p + N - 1} \phi(x).
\end{split}
\end{equation}
In view of~\eqref{jedrasiak}, we know that
$$\tau_m \phi(x) = \sum_{k = 0}^{+ \infty} \alpha_{k + m, s} \rho^{k + m} x^k.$$
Moreover, the radius of convergence of the power series $\sum_{k \geq 0} \alpha_{k, s} x^k$ is equal to $1$. Therefore, given any number $0 < \sigma < 1$, there exists a positive number $M_\sigma$ such that
$$\alpha_{k , s} \sigma^k \leq M_\sigma,$$
for any integer $k \in \NN$. When $\tau < \sigma < 1$ and $- 1 \leq x \leq 1$, we are led to the estimate
$$\big| \tau_m \phi(x) \big| \leq M_\sigma \frac{\tau^m}{\sigma^m} \sum_{k = 0}^{+ \infty} \frac{\rho^k}{\sigma^k} = \frac{M_\sigma \rho^m}{\sigma^{m - 1} (\sigma - \rho)}.$$
Inserting this inequality into~\eqref{lapandry} gives
\begin{align*}
\big| R_{N - 2 k}^{p + 1}(x) \big| \leq \frac{M_\sigma \sigma}{\sigma - \rho} & \sum_{\max \big\{ 1, \frac{k}{2} \big\} \leq j_1 < \frac{N}{2}} b_k^{2 j_1} \sum_{\max \big\{ 1, \frac{j_1}{2} \big\} \leq j_2 < \frac{N}{2}} b_{j_1}^{2 j_2} \ldots\\
& \ldots \sum_{\max \big\{ 1, \frac{j_{p - 1}}{2} \big\} \leq j_p < \frac{N}{2}} \, b_{j_{p - 1}}^{2 j_p}\, b_{j_p}^{N - 1} \bigg( \frac{\rho}{\sigma} \bigg)^{2 j_1 + 2 j_2 \ldots 2 j_p + N - 1}.
\end{align*}
Hence, we deduce from the inequality $\rho < \sigma$ that
\begin{equation}
\label{yato}
\begin{split}
\big| R_{N - 2 k}^{p + 1}(x) \big| & \leq \frac{M_\sigma \sigma}{\sigma - \rho} \bigg( \frac{\rho}{\sigma} \bigg)^{2 p + N - 1} \times\\
& \times \sum_{\max \big\{ 1, \frac{k}{2} \big\} \leq j_1 < \frac{N}{2}} b_k^{2 j_1} \sum_{\max \big\{ 1, \frac{j_1}{2} \big\} \leq j_2 < \frac{N}{2}} b_{j_1}^{2 j_2} \ldots \sum_{\max \big\{ 1, \frac{j_{p - 1}}{2} \big\} \leq j_p < \frac{N}{2}} \, b_{j_{p - 1}}^{2 j_p}\, b_{j_p}^{N - 1}.
\end{split}
\end{equation}
At this stage, we deduce from the discrete Fubini theorem
\begin{align*}
\sum_{1 \leq k \leq \frac{N}{2}} \, \sum_{\max \big\{ 1, \frac{k}{2} \big\} \leq j < \frac{N}{2}} b_k^{2 j} \alpha_j & \leq \sum_{1 \leq k \leq \frac{N}{2}} \, \sum_{\max \big\{ 1, \frac{k}{2} \big\} \leq j < \frac{N}{2}} \binom{2 j}{k} \frac{\alpha_j}{2^{2 j}}\\
& \leq \sum_{1 \leq j \leq \frac{N}{2}} \, \sum_{1 \leq k \leq 2 j} \binom{2 j}{k} \frac{\alpha_j}{2^{2 j}} = \sum_{1 \leq j \leq \frac{N}{2}} \alpha_j,
\end{align*}
for any non-negative numbers $\alpha_1$, $\alpha_2$, $\ldots$ and $\alpha_j$. In view of~\eqref{yato}, this yields
$$\sum_{1 \leq k < \frac{N}{2}} \big| R_{N - 2 k}^{p + 1}(x) \big| \leq \frac{M_\sigma \sigma}{\sigma - \rho} \bigg( \frac{\rho}{\sigma} \bigg)^{2 p + N - 1} \sum_{1 \leq j < \frac{N}{2}} b_j^{N - 1}.$$
Since
$$\sum_{1 \leq j < \frac{N}{2}} b_j^{N - 1} \leq \frac{1}{2^{N - 1}} \sum_{j = 1}^{N - 1} \binom{N - 1}{j} \leq 1,$$
we conclude that
$$\sum_{1 \leq k < \frac{N}{2}} \big| R_{N - 2 k}^{p + 1}(x) \big| \leq \frac{M_\sigma \sigma}{\sigma - \rho} \bigg( \frac{\rho}{\sigma} \bigg)^{2 p + N - 1},$$
which is enough to obtain the uniform convergence in~\eqref{vahaamahina}.

Applying this convergence to~\eqref{sadourny} leads to the identity
$$\Phi(\xi) = \sin(N \xi) \, F(\xi),$$
for any number $\xi \in \RR$. The map $F$ in this formula is defined as the function series
$$F(\xi) = \sum_{q = 1}^{+ \infty} F^q(\xi).$$
Since the convergence of this series is uniform on $\RR$, and all the functions $F^q$ are continuous and positive, so is the map $F$. Its $2 \pi/N$-periodicity then follows from its continuity and the $2 \pi/N$-periodicity of the functions $\Phi$ and $\xi \mapsto \sin(N \xi)$. This completes the proof of Lemma~\ref{lem:combinatorics}.
\end{proof}

\begin{merci}
The authors acknowledge support from the project ``Dispersive and random waves'' (ANR-18-CE40-0020-01) of the Agence Nationale de la Recherche.
\end{merci}

\bibliographystyle{plain}
\bibliography{bibliography}

\end{document}